   \DeclareMathOperator{\Id}{Id}
   \DeclareMathOperator{\e}{e}
   \DeclareMathOperator{\Lip}{Lip}
   \newtheorem{theorem}{Theorem}[section]
   \newtheorem{lemma}[theorem]{Lemma}
   \newtheorem{proposition}[theorem]{Proposition}
   \newtheorem{example}[theorem]{Example}
   \newtheorem{corollary}[theorem]{Corollary}
   \newcommand{\N}{\mathds{N}}
   \newcommand{\Z}{\mathds{Z}}
   \newcommand{\R}{\mathds{R}}
   \newcommand{\T}{\mathbb{T}}
   \newcommand{\cA}{\mathcal{A}}
   \newcommand{\cB}{\mathcal{B}}
   \newcommand{\sF}{\mathscr{F}}
   \newcommand{\cG}{\mathcal{G}}
   \newcommand{\cH}{\mathcal{H}}
   \newcommand{\cV}{\mathcal{V}}
   \newcommand{\cX}{\mathfrak{X}}
   \newcommand{\cL}{\mathfrak{L}}
   \newcommand{\cJ}{\mathfrak{J}}
   \newcommand{\lb}{label}
   \newcommand{\lm}{leftmargin}
\newcommand{\eps}{\varepsilon}
\newcommand{\lbd}{\lambda}
\newcommand{\sgm}{\sigma}
\newcommand{\Sgm}{\Sigma}
\newcommand{\bxi}{\,\overline{\!\xi}}
\newcommand{\w}{\omega}
\newcommand{\W}{\Omega}
\newcommand{\phiw}[1]{\Phi_{\theta^{#1}\w}^{-{#1}}}
\newcommand{\phii}[2]{\Phi_{\theta^{#1}\w}^{-{#2}}}
\newcommand{\prts}[1]{\left(#1\right)}
\newcommand{\prtsr}[1]{\left[#1\right]}
\newcommand{\abs}[1]{\left|#1\right|}
\newcommand{\norm}[1]{\left\|#1\right\|}
\newcommand{\set}[1]{\left\{#1\right\}}
\newcommand{\setm}[1]{\setminus\set{#1}}
\newcommand{\maxs}[1]{\max\set{#1}}
\newcommand{\mins}[1]{\min\set{#1}}
\newcommand{\sups}[1]{\sup\set{#1}}
\newcommand{\x}{\times}
\newcommand{\pfrac}[2]{\prts{\dfrac{#1}{#2}}}
\newcommand{\dsum}{\displaystyle\sum}
\newcommand{\dint}{\displaystyle\int}
\newcommand{\dlim}{\displaystyle\lim}
\newcommand{\dr}{\, dr}
\newcommand{\ds}{\, ds}
\newcommand{\dmu}{\, d\mu}
\renewcommand{\ge}{\geqslant}
\renewcommand{\le}{\leqslant}
\renewcommand{\leq}{\leqslant}
\begin{document}
\title[Invariant manifolds for RDS on Banach Spaces]
   {Invariant manifolds for Random Dynamical Systems on Banach Spaces exhibiting generalized dichotomies}
\author[Ant\'onio J. G. Bento]{Ant\'onio J. G. Bento}
   \address{
      Ant\'onio J. G. Bento\\
      Departamento de Matem\'atica\\
      Universidade da Beira Interior\\
      6201-001 Covilh\~a\\
      Portugal}
   \address{
      Centro de Matem\'atica e Aplica\c{c}\~oes\\
      Universidade da Beira Interior\\
      6201-001 Covilh\~a\\
      Portugal}
   \email{bento@ubi.pt}
\author{Helder Vilarinho}
   \address{
      Helder Vilarinho\\
      Departamento de Matem\'atica\\
      Universidade da Beira Interior\\
      6201-001 Covilh\~a\\
      Portugal}
   \address{
      Centro de Matem\'atica e Aplica\c{c}\~oes\\
      Universidade da Beira Interior\\
      6201-001 Covilh\~a\\
      Portugal}
   \email{helder@ubi.pt}
   \urladdr{www.mat.ubi.pt/~helder}
\date{\today}
\subjclass[2010]{37L55, 37D10, 37H99}
\keywords{Invariant manifolds, random dynamical systems, dichotomies}
\begin{abstract}
   We prove the existence of measurable invariant manifolds for small perturbations of linear Random Dynamical Systems evolving on a Banach space and admitting a general type of dichotomy, both for continuous and discrete time. Moreover, the asymptotic behavior in the invariant manifold is similar to the one of the linear Random Dynamical System.
\end{abstract}
\maketitle
\section{Introduction}

One of the main issues in Dynamical Systems is the study of properties and structures (geometric, topological, ergodic, ...)  that are invariant over time, either in the deterministic or in the random evolutionary systems. The study of invariant manifolds for deterministic dynamical systems goes back to the works of Hadamard~\cite{Hadamard-BSMF-1901}, Lyapunov~\cite{Lyapunov-IJC-1992} and Perron~\cite{Perron-MZ-1929,Perron-JFRAM-1929,Perron-MZ-1930}. For an historical background see for example \cite{Bates_Lu_Zeng-book-1998}. In the Random Dynamical Systems (RDS) framework there are several works covering (local and/or global) center, stable, unstable and inertial manifolds for a variety of state spaces, that goes from the Euclidean space to Hilbert spaces or separable Banach spaces, either generated by stochastic or by random differential equations. The list of works on this subject is already too extensive to be completely written down here. We refer for \cite{Arnold-RDS-1998, Liu_Qian-book-1995, Lian_Lu-book-2010, Wanner-proc-1995, Mohammed_Zhang_Zhao-book-2008}. See also \cite{Barreira_Valls-SD-2018,Caraballo_Duan_Lu_Schmalfuss-ANS-2010, Ruelle-AM-1982,Lu_Schmalfuss-proc-2012} and references therein. For invariant manifolds of RDS on infinite dimensional Banach space see \cite{Bates_Lu_Zeng-book-1998,Duan_Lu_Schmalfuss-AP-2003,Lian_Lu-book-2010,Barreira_Valls-SD-2018}.

In this work we prove the existence of random global invariant manifolds for RDS evolving on a Banach space (not necessarily separable), both in the continuous and in the discrete time settings. The RDS considered are obtained by perturbing linear RDS that admit a generalized dichotomy. In the deterministic cases this kind of dichotomies were considered in \cite{Bento_Silva-BSM-2014,Bento_Silva-PM-2016} and generalize the common nonuniform exponential condition that often arises from the Multiplicative Ergodic Theorem. To the best of our knowledge this kind of dichotomies were not considered before in the RDS setting. Moreover, the perturbations considered in this work satisfy some natural conditions that guarantee not only the existence of invariant manifolds but also some control on the dynamics.

We notice that the separability of the state space is not assumed, which in the continuous time case requires some special attention to measurability and integrability issues. Moreover, for our purposes, the driving system consists of an invertible dynamical system defined on a measure space that is not necessarily finite, as typically considered on RDS theory.

In the RDS setting one expects for properties that hold for almost every element in the driving system. Throughout this work we will assume that it is possible to consider properties that hold for all elements of the driving system by restricting this dynamics, if necessary, to a full measure invariant subset (see Lemma~\ref{lemma:restricted_DS}).

The main strategy used to obtain the random invariant manifolds follows the Lyapunov-Perron approach. We define a convenient space of pairs of functions that is a complete metric space and use the Banach Fixed Point Theorem to obtain the invariant manifolds as the graph of a function. We notice that no random norms were considered and that the determinist cases can be easily deduced from the random counterpart.  We also give several examples that include the usual nonuniform exponential situation and also that illustrates other situations beyond this behaviour.

The paper is organized as follows. In Section~\ref{Notation and preliminaries} we recall the notion of the Bochner integral and give an elementary introduction to RDS. Moreover, we also define generalized dichotomies, to which we give some examples, and the type of perturbations considered. In Section~\ref{sec:cont_time} we state the main theorem for continuous time (Theorem~\ref{thm:global}) and get some corollaries. The section finishes with the proof of Theorem~\ref{thm:global}. The discrete time case is discussed in Section~\ref{sec:disc_time}, where we state the main theorem (Theorem~\ref{thm:global:disc}), give some corollaries and give its proof.

\section{Notation and preliminaries}\label{Notation and preliminaries}
\subsection{Bochner integral}

We start by compiling some facts about the Boch\-ner integral. Let $(A,\cA)$ and $(B,\cB)$ be measurable spaces and let $X$ be a Banach space.
A map $g \colon A \to B$ is  \emph{$(\cA,\cB)$-measurable} if $g^{-1}(U) \in \cA$ for every $U \in \cB$ and a map $h \colon A \to X$ is \emph{simple} if there are distinct elements $y_1, \ldots, y_n \in X$ and pairwise disjoint sets $A_1, \ldots, A_n \in \cA$ such that ${A_1 \cup \cdots \cup A_n = A}$ and
   $$ h(a) = \dsum_{i=1}^n y_i \cdot \chi_{A_i}(a),$$
where $\chi_{A_i}$ is the indicator function of $A_i$. A function $h \colon A \to X$  is \emph{Bochner measurable} if there is a sequence of simple functions $h_n \colon A \to X$ such that
\begin{equation*}
   \lim_{n \to +\infty} \|h_n(a) - h(a)\| = 0 \ \ \ \text{ for every } a \in A.
\end{equation*}
This property is sometimes called strong measurability, which, in turn, is also used with a different meaning as we remark below. In order to avoid some misunderstandings we will always use the expression Bochner measurable, that is motivated by the goal of using Bochner integrals.

Given a topological space $T$, we denote by $\cB(T)$ the $\sigma$-algebra generated by the open subsets of $T$.

\begin{proposition}[{\cite[Corollary 1.1.10]{Hytonen_Neerven_Veraar_Weis-book-2016}}] \label{prop:Borel_x_Bochner}
   Let $(A,\cA)$ be a measurable space, let $X$ be a Banach space and consider $h \colon A \to X$. Then $h$ is Bochner measurable if and only if it is $(\cA,\cB(X))$-measu\-ra\-ble and has separable range.
\end{proposition}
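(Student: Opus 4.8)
The plan is to prove the two implications separately, disposing of the easier forward direction first.

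For the forward direction, I would start from simple functions $h_n\colon A\to X$ with $\norm{h_n(a)-h(a)}\to 0$ for every $a\in A$. Separability of the range is then immediate: the set $D:=\bigcup_{n}h_n(A)$ is countable, each value $h(a)$ is a limit of points of $D$, so the range of $h$ is contained in the separable set $\overline D$. For measurability I would first note that every simple function is $(\cA,\cB(X))$-measurable, since the preimage of a Borel set is a finite union of the defining sets $A_i$, and then invoke the general fact that a pointwise limit of $(\cA,\cB(X))$-measurable maps into a metric space is again $(\cA,\cB(X))$-measurable. The latter is verified by writing, for a closed set $F\subseteq X$,
\[
   h^{-1}(F)=\bigcap_{k\in\N}\bigcup_{N\in\N}\bigcap_{n\ge N} h_n^{-1}\bigl(\{x\in X:\operatorname{dist}(x,F)<1/k\}\bigr)\in\cA ,
\]
together with the fact that the closed sets generate $\cB(X)$.

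For the reverse direction — the substantive one — I would fix a countable set $D=\set{d_i:i\in\N}$ dense in the range of $h$, which exists by hypothesis. The only place the hypothesis on $h$ enters is the observation that each map $a\mapsto\norm{h(a)-d_i}$ is $(\cA,\cB(\R))$-measurable, being the composition of the $(\cA,\cB(X))$-measurable map $h$ with the continuous function $x\mapsto\norm{x-d_i}$. Then, for each $m\in\N$, I would define $h_m(a):=d_{\iota(a,m)}$, where $\iota(a,m)\in\set{1,\ldots,m}$ is the least index minimising $\norm{h(a)-d_i}$ over $i\le m$. Each $h_m$ takes only the values $d_1,\ldots,d_m$, and every level set $\set{a:\iota(a,m)=i}$ is a finite intersection of sets of the form $\set{a:\norm{h(a)-d_i}<\norm{h(a)-d_j}}$ and $\set{a:\norm{h(a)-d_i}\le\norm{h(a)-d_j}}$, hence lies in $\cA$; so $h_m$ is simple. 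Since $\norm{h_m(a)-h(a)}=\min_{i\le m}\norm{h(a)-d_i}$ decreases to $\operatorname{dist}(h(a),D)=0$ as $m\to+\infty$, the sequence $(h_m)$ converges to $h$ pointwise, which is exactly Bochner measurability.

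I expect the only genuine obstacle to be a bookkeeping one rather than a conceptual one: the definition of a simple function here requires finitely many values on a \emph{finite} measurable partition, so the naive "nearest point of $D$" map is inadmissible (it is only countably valued), and a careless two-step approximation would call for a diagonal argument that pointwise convergence does not support. Restricting the competition to the first $m$ dictionary points and breaking ties by smallest index sidesteps this, yielding honestly simple functions while preserving pointwise (not uniform) convergence; everything else is routine measure-theoretic verification.
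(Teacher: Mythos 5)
The paper does not prove this proposition at all: it is stated as a quoted result, with the proof deferred entirely to \cite[Corollary 1.1.10]{Hytonen_Neerven_Veraar_Weis-book-2016}. Your argument is correct and is essentially the standard Pettis-style proof that the reference itself uses: separability and Borel measurability of the limit in the easy direction via the $\bigcap_k\bigcup_N\bigcap_{n\ge N}$ device on closed sets, and in the hard direction the ``best approximant from $\{d_1,\dots,d_m\}$ with ties broken by least index'' construction, whose level sets you correctly exhibit as finite Boolean combinations of sets of the form $\{a:\|h(a)-d_i\|\le\|h(a)-d_j\|\}$. Your closing remark about why the naive nearest-point map fails (it is only countably valued, hence not simple in the sense required here) is exactly the right thing to flag; the construction you give sidesteps it cleanly, and the proof is complete.
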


An immediate consequence of the above proposition is the following corollary.

\begin{corollary} \label{cor:Bmeas_o_meas_is_Bmeas}
   Let $(A,\cA)$ and $(B,\cB)$ be measurable spaces, let $X$ be a Banach space and consider maps $g \colon A \to B$ and $h \colon B \to X$.
   If $g$ is $(\cA,\cB)$-measurable and $h$ is Bochner measurable, then $h\circ g$ is Bochner measurable.
\end{corollary}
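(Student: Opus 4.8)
The plan is to reduce everything to Proposition~\ref{prop:Borel_x_Bochner}, which characterizes Bochner measurability as the conjunction of two properties: Borel measurability (into $\cB(X)$) and separability of the range. So I would first apply that proposition to $h$ to extract both pieces of information, then verify that $h\circ g$ inherits each of them, and finally invoke the proposition in the reverse direction to conclude.

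\begin{proof}
   Since $h$ is Bochner measurable, Proposition~\ref{prop:Borel_x_Bochner} tells us that $h$ is $(\cB,\cB(X))$-measurable and that $h(B)$ is a separable subset of $X$. As $g$ is $(\cA,\cB)$-measurable and $h$ is $(\cB,\cB(X))$-measurable, the composition $h\circ g$ is $(\cA,\cB(X))$-measurable, being a composition of measurable maps between measurable spaces. Moreover, the range of $h\circ g$ satisfies $(h\circ g)(A)\subseteq h(B)$, and a subset of a separable metric space is again separable; hence $(h\circ g)(A)$ is separable. Applying Proposition~\ref{prop:Borel_x_Bochner} once more, this time to $h\circ g$, we conclude that $h\circ g$ is Bochner measurable.
\end{proof}

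The two steps to carry out in order are therefore: (i) rewrite "Bochner measurable" as "Borel measurable with separable range" for $h$; (ii) push both attributes through the composition. The only point that deserves a word of care — and what I would flag as the sole (minor) obstacle — is the separability claim: one must use that a subspace of a \emph{metric} space that is separable is itself separable (this fails for general topological spaces), which is exactly why the range being contained in the separable metric space $X$ suffices. Everything else is the routine fact that compositions of measurable maps are measurable, so no genuine difficulty arises.
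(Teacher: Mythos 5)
Your proof is correct and follows exactly the route the paper indicates: the paper presents the corollary as ``an immediate consequence'' of Proposition~\ref{prop:Borel_x_Bochner}, and your argument spells out that immediacy by transferring both the Borel measurability and the separable-range property through the composition, with the appropriate (and correctly flagged) remark that subspaces of separable metric spaces are separable.
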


Let $(A,\cA,\mu)$ be a measure space. We say that a simple function
\begin{equation*}
   h(a) = \sum_{i=1}^n y_i \cdot \chi_{A_i}(a)
\end{equation*}
is Bochner integrable if $\mu(A_i) < +\infty$ for all $i=1, \ldots,n$ such that $y_i \ne 0$, and its integral is given by
\begin{equation*}
   \dint_A h \dmu = \dsum_{i=1}^n y_i \cdot \mu(A_i),
\end{equation*}
with the convention $0 \times (+\infty) = 0$. A Bochner measurable function $h \colon A \to X$ is \textit{Bochner integrable} if there is a sequence $h_n \colon A \to X$ of simple Bochner integrable functions pointwise convergent to $h$ and such that
\begin{equation*}
   \lim_{n \to +\infty} \dint_A ||h_n - h|| \dmu = 0,
\end{equation*}
where the integral considered here is the Lebesgue integral. Then the sequence $\prts{\int_A h_n \dmu}_{n \in \N}$ is convergent (in $X$) and the Bochner integral of $h$ is given by
\begin{equation*}
   \dint_A h \dmu
   = \lim_{n \to +\infty} \dint_A h_n \dmu.
\end{equation*}

\begin{proposition}[{\cite[Proposition 1.2.2]{Hytonen_Neerven_Veraar_Weis-book-2016}}]\label{prop:Bochner:2}
   Let $(A,\cA,\mu)$ be a measure space, let $X$ be a Banach space and consider a Bochner measurable map $h \colon A \to X$. Then $h$ is Bochner integrable if and only if $\|h\|$ is Lebesgue integrable. Moreover, if $h$ is Bochner integrable we have
   \begin{equation*}
      \norm{\dint_A h \dmu}
      \le \dint_A \norm{h} \dmu.
   \end{equation*}
\end{proposition}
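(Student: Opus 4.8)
The plan is to establish the equivalence by proving each implication separately and then to deduce the norm inequality by approximation, the whole argument hinging on one explicit choice of approximating simple functions. The easy implication comes first: suppose $h$ is Bochner integrable. By definition there is a sequence $(h_n)$ of simple Bochner integrable functions converging pointwise to $h$ with $\int_A \norm{h_n-h} \dmu \to 0$. I would fix a single index $N$ with $\int_A \norm{h_N-h}\dmu < +\infty$. Writing $h_N = \dsum_{i=1}^n y_i \chi_{A_i}$, the function $\norm{h_N}$ is a nonnegative simple function whose nonzero values are taken on sets $A_i$ with $\mu(A_i) < +\infty$, hence $\norm{h_N}$ is Lebesgue integrable; and $\norm{h_N - h}$ is Lebesgue integrable by the choice of $N$. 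Since $\norm{h}$ is $(\cA,\cB(\R))$-measurable (it is the composition of the Bochner measurable map $h$ with the continuous norm, using Corollary~\ref{cor:Bmeas_o_meas_is_Bmeas}) and $\norm{h} \le \norm{h_N} + \norm{h_N-h}$ pointwise, $\norm{h}$ is Lebesgue integrable.

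For the converse, assume $\norm{h}$ is Lebesgue integrable. Since $h$ is Bochner measurable, pick simple functions $g_n \to h$ pointwise; these need not be Bochner integrable, so the idea is to truncate. Set $B_n = \set{a \in A : \norm{g_n(a)} \le 2\norm{h(a)}}$, which lies in $\cA$ because $\norm{g_n}$ and $\norm{h}$ are measurable, and define $h_n = g_n \cdot \chi_{B_n}$. I would then verify four things: (i) $h_n$ is simple, being $\dsum_i y_i \chi_{A_i \cap B_n}$ when $g_n = \dsum_i y_i \chi_{A_i}$; (ii) $h_n$ is Bochner integrable, since on each piece with $y_i \ne 0$ one has $\norm{y_i}\,\chi_{A_i \cap B_n} \le 2\norm{h}$, which is integrable, forcing $\mu(A_i \cap B_n) < +\infty$; (iii) $h_n(a) \to h(a)$ for every $a$ --- if $h(a) = 0$ then $a \in B_n$ exactly when $g_n(a) = 0$, so $h_n(a) = 0 = h(a)$ in every case, and if $h(a) \ne 0$ then $\norm{g_n(a)} \to \norm{h(a)}$ places $a$ in $B_n$ for large $n$, whence $h_n(a) = g_n(a) \to h(a)$; (iv) $\int_A \norm{h_n - h}\dmu \to 0$, because $\norm{h_n - h} \le \norm{h_n} + \norm{h} \le 3\norm{h} \in L^1$ and $\norm{h_n-h}\to 0$ pointwise, so the Dominated Convergence Theorem applies. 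These four properties are precisely what the definition demands, so $h$ is Bochner integrable.

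Finally, to obtain the inequality for Bochner integrable $h$, I would first check it for a simple Bochner integrable function $h_n = \dsum_i y_i \chi_{A_i}$, where it is just the triangle inequality in $X$: $\norm{\dint_A h_n \dmu} = \norm{\dsum_i y_i \mu(A_i)} \le \dsum_i \norm{y_i}\,\mu(A_i) = \dint_A \norm{h_n}\dmu$. Then, with $(h_n)$ as in the definition of the Bochner integral of $h$, one has $\dint_A h_n \dmu \to \dint_A h\dmu$ in $X$ and $\abs{\dint_A \norm{h_n}\dmu - \dint_A \norm{h}\dmu} \le \dint_A \norm{h_n - h}\dmu \to 0$, so passing to the limit on both sides preserves the inequality.

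The only genuinely delicate step is the construction in the converse direction: one must keep the approximants simple, keep them Bochner integrable (finite-measure support wherever the value is nonzero), retain pointwise convergence, and at the same time produce a fixed integrable majorant so that Dominated Convergence can be invoked. The truncation of $g_n$ against $2\norm{h}$ is exactly the device that reconciles all of these requirements simultaneously; once it is in place, everything else is a routine verification.
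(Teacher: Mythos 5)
The paper does not prove this proposition; it cites it verbatim from Hyt\"onen--van Neerven--Veraar--Weis, so there is no in-text argument to compare against. Your proof is a correct and essentially standard demonstration of the cited result: the forward implication via $\|h\|\le\|h_N\|+\|h_N-h\|$ for one fixed index $N$, the converse via the truncation $h_n=g_n\cdot\chi_{\{\|g_n\|\le 2\|h\|\}}$ (which simultaneously preserves simplicity, forces finite measure on the support of each nonzero value because $\|h\|\in L^1$, keeps pointwise convergence --- your handling of the $h(a)=0$ case by noting $a\in B_n$ iff $g_n(a)=0$ is the right observation --- and supplies the dominating function $3\|h\|$ for dominated convergence), and finally the norm inequality by the triangle inequality on simple functions plus passage to the limit. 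One cosmetic remark: the paper's definition of a simple function demands distinct values $y_i$ and a partition of $A$, whereas $\dsum_i y_i\chi_{A_i\cap B_n}$ need not be in that normal form (the value $0$ may recur, and the sets may not cover $A$); regrouping the finitely many level sets of $h_n$ fixes this and does not affect any estimate. With that understood, the argument is complete.
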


\subsection{Random Dynamical Systems on Banach spaces}
Let us recall now some basic concepts on Random Dynamical Systems (RDS). For a complete introduction we recommend \cite{Arnold-RDS-1998}.
Let $\T$ be $\R$ or $\Z$ depending if we are concerned in the continuous or in the discrete time, respectively, and let $\T^+ = \T \cap [0,+\infty[$. Consider a measure space $(\W, \Sgm, \mu)$ and a \emph{measure-preserving dynamical system} $(\W, \Sgm, \mu, \theta)$, in the sense that
\begin{enumerate}[\lb= ,\lm=10mm]
   \item $\theta \colon \T \times \W \to \W$ is $\prts{\cB(\T) \otimes \Sgm ,\Sgm}$-measurable;
   \item $\theta^t \colon \W \to \W$ given by $\theta^t \w = \theta(t,\w)$ preserves the measure $\mu$ for all $t \in \T$;
   \item $\theta^0 = \Id_{\W}$ and $\theta^{t+s}=\theta^t\circ\theta^s$ for
       all $t,s \in\T$.
\end{enumerate}
If $\mu$ is a probability measure, then $(\W, \Sgm, \mu, \theta)$ is called a \emph{metric dynamical system}.
A (\emph{measurable}) \emph{random dynamical system} (RDS) on a Banach space $X$ over
$(\W, \Sgm, \mu, \theta)$ with time $\T^+$ is a map
\begin{equation*}
  \Phi:\T^+\times\W\times X \to X
\end{equation*}
such that
\begin{enumerate}[\lb=$\roman*)$,\lm=10mm]
   \item $(t,\w) \mapsto \Phi(t,\w,x)$ is $\prts{\cB(\T^+) \otimes \Sgm,\cB(X)}$-measurable for every $x \in X$;
   \item $\Phi_\w^t \colon X \to X$ given by $\Phi_\w^t x = \Phi(t,\w,x)$ forms a cocycle over $\theta$, i.e.,
      \begin{enumerate}[\lb=$\alph*)$,\lm=6mm]
         \item $\Phi_\w^0=\Id_X$ for all $\w\in\W$;
         \item $\Phi_\w^{t+s}=\Phi_{\theta^s\w}^t\circ\Phi_\w^s$, for all $s, t\in\T^+$ and $\w\in\W$.
      \end{enumerate}
\end{enumerate}

If, in addition,
\begin{enumerate}[\lb=$\roman*')$,\lm=10mm]
   \item $(t,\w) \mapsto \Phi(t,\w,x)$ is Bochner measurable for every $x \in X$
\end{enumerate}
we say that $\Phi$ is a \textit{Bochner measurable RDS}. Property $i)$ is also called strong measurability (for $\Phi$). By Proposition~\ref{prop:Borel_x_Bochner}, conditions $i)$ and $i')$ are equivalent when $X$ is separable.

A measurable RDS $\Phi$ is called \emph{linear} if $\Phi_\w^t$ is a bounded linear operator for each $\w\in\W$ and $t\in\T^+$.

In RDS theory it is typically assumed that the driving system $(\W, \Sgm, \mu, \theta)$ is a metric dynamical system. However, for our purposes we do not assume \emph{a priori} that the measure $\mu$ is finite.

\begin{lemma}\label{lemma:restricted_DS}
   Let $\Theta\equiv(\W,\Sgm,\mu,\theta)$ be a measure preserving dynamical system and consider a measurable set $\ \W'\in\Sgm$ that is $\theta^t$-invariant for all $t \in \T$. Let $\Sgm'=\{B\cap\W'\colon B\in\Sgm\}$ be the trace of $\Sgm$ with respect to $\W'$, and let $\mu'=\mu\vert_{\W'}$ and $\theta'=\theta\vert_{\W'}$. Then:
   \begin{enumerate}[\lb=$\roman*)$,\lm=10mm]
      \item $\Theta'\equiv(\W',\Sgm',\mu',\theta')$ is a measure preserving dynamical system;
      \item if $\Phi$ is a measurable (resp. Bochner measurable) RDS over $\Theta$ then $\Phi\vert_{\T^+\x\W'\x X}$ is a measurable (resp. Bochner measurable) RDS over $\Theta'$.
   \end{enumerate}
\end{lemma}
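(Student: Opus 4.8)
The plan is to check, one axiom at a time, the two definitions involved — measure‑preserving dynamical system and (Bochner) measurable RDS — where the only ingredient that is not purely mechanical is how a product $\sigma$‑algebra behaves under passage to a trace. First I would record two preliminary facts. Since $\W'\in\Sgm$, the trace $\Sgm'=\set{B\cap\W'\colon B\in\Sgm}$ is not merely a $\sigma$‑algebra on $\W'$ but actually $\Sgm'\subseteq\Sgm$, so $\mu'=\mu\vert_{\W'}$ is an honest measure on $(\W',\Sgm')$ with $\mu'(C)=\mu(C)$ for $C\in\Sgm'$. Secondly, as each $\theta^t$ is invertible with $(\theta^t)^{-1}=\theta^{-t}$ and $\W'$ is $\theta^t$‑invariant for \emph{every} $t\in\T$, one gets $\theta^t(\W')=\W'$ and $(\theta^t)^{-1}(\W')=\W'$ for all $t\in\T$; in particular $\theta'$ maps $\T\x\W'$ into $\W'$ and $\Phi\vert_{\T^+\x\W'\x X}$ maps into $X$, so both restrictions are well defined. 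The lemma I would isolate and use repeatedly is: for any measurable space $(Y,\cG)$, the trace of $\cG\otimes\Sgm$ on $Y\x\W'$ equals $\cG\otimes\Sgm'$. Indeed, both $\sigma$‑algebras are generated by the sets $(E\x B)\cap(Y\x\W')=E\x(B\cap\W')$ with $E\in\cG$, $B\in\Sgm$, and forming a trace commutes with generating a $\sigma$‑algebra.

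For part $(i)$, to see that $\theta'$ is $\prts{\cB(\T)\otimes\Sgm',\Sgm'}$‑measurable, take $C\in\Sgm'$; then $(\theta')^{-1}(C)=\theta^{-1}(C)\cap(\T\x\W')$ lies in the trace of $\cB(\T)\otimes\Sgm$ on $\T\x\W'$, which by the lemma is $\cB(\T)\otimes\Sgm'$. For measure preservation, fix $t\in\T$ and $C\in\Sgm'$; since $(\theta^t)^{-1}(C)\subseteq(\theta^t)^{-1}(\W')=\W'$, the preimage of $C$ under $(\theta')^t$ equals $(\theta^t)^{-1}(C)$, whence $\mu'$ of it equals $\mu\bigl((\theta^t)^{-1}(C)\bigr)=\mu(C)=\mu'(C)$. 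The relations $(\theta')^0=\Id_{\W'}$ and $(\theta')^{t+s}=(\theta')^t\circ(\theta')^s$ are just restrictions of the corresponding relations for $\theta$.

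For part $(ii)$, write $\Phi'=\Phi\vert_{\T^+\x\W'\x X}$. Fixing $x\in X$, the map $(t,\w)\mapsto\Phi'(t,\w,x)$ is the restriction to $\T^+\x\W'$ of the $\prts{\cB(\T^+)\otimes\Sgm,\cB(X)}$‑measurable map $(t,\w)\mapsto\Phi(t,\w,x)$, so for $U\in\cB(X)$ its preimage is $\Phi(\cdot,\cdot,x)^{-1}(U)\cap(\T^+\x\W')\in\cB(\T^+)\otimes\Sgm'$, again by the lemma, giving property $i)$ for $\Phi'$ over $\Theta'$. The cocycle identities $(\Phi')^0_\w=\Id_X$ and $(\Phi')^{t+s}_\w=(\Phi')^t_{(\theta')^s\w}\circ(\Phi')^s_\w$ for $\w\in\W'$ follow from those for $\Phi$, using $(\theta')^s\w=\theta^s\w\in\W'$. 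In the Bochner measurable case, property $i')$ for $\Phi'$ follows from Corollary~\ref{cor:Bmeas_o_meas_is_Bmeas} applied to the inclusion $\T^+\x\W'\hookrightarrow\T^+\x\W$ — which is $\prts{\cB(\T^+)\otimes\Sgm',\cB(\T^+)\otimes\Sgm}$‑measurable by the lemma — composed with the Bochner measurable map $\Phi(\cdot,\cdot,x)$; alternatively, restricting an approximating sequence of $\Sgm$‑simple functions to $\T^+\x\W'$ yields $\Sgm'$‑simple functions converging pointwise to $\Phi'(\cdot,\cdot,x)$.

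I do not expect a genuine obstacle: the content is bookkeeping. The one point that deserves care is the trace‑of‑a‑product identity $(\cG\otimes\Sgm)\vert_{Y\x\W'}=\cG\otimes\Sgm'$, on which every measurability assertion rests, together with the observation that it is the two‑sided invariance of $\W'$ under the group $\set{\theta^t\colon t\in\T}$ that makes $\theta'$ and the restricted cocycle well defined at all.
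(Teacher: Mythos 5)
Your proof is correct; the paper itself merely cites a reference for part~$(i)$, declares part~$(ii)$ analogous, and invokes Proposition~\ref{prop:Borel_x_Bochner} for the Bochner case, so your argument supplies precisely the routine details the paper delegates. The trace-of-a-product identity $(\cG\otimes\Sgm)\vert_{Y\x\W'}=\cG\otimes\Sgm'$ that you isolate is indeed the fact on which every measurability check turns, and your Bochner-case argument via Corollary~\ref{cor:Bmeas_o_meas_is_Bmeas} applied to the inclusion $\T^+\x\W'\hookrightarrow\T^+\x\W$ is equivalent to the paper's appeal to Proposition~\ref{prop:Borel_x_Bochner}, of which that corollary is an immediate consequence.
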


\begin{proof}
   The first item is proved in~\cite[Lemma 3.2]{Caraballo_Duan_Lu_Schmalfuss-ANS-2010} and the proof of the second item is analogous. The Bochner measurable RDS case follows from Proposition~\ref{prop:Borel_x_Bochner}.
\end{proof}

\subsection{Generalized Dichotomies}

Given a map $P \colon \W \x X \to X$, we say that a measurable (resp. Bochner measurable) linear RDS $\Phi$ admits a measurable (resp. Bochner measurable) \textit{P-invariant splitting} if
\begin{enumerate}[\lb=$\roman*)$,\lm=10mm]
   \item $\w \mapsto P(\w,x)$ is $\prts{\Sgm,\cB(X)}$-measurable (resp. Bochner measurable) for every $x \in X$;
   \item $P_\w \colon X \to X$ defined by $P_\w x = P(\w,x)$ is a linear bounded projection for all $\w \in \W$;
   \item $P_{\theta^t\w} \Phi_{\w}^t = \Phi_{\w}^t P_\w$ for all $t \in \T^+$ and all $\w \in \W$;
   \item $\Phi_{\w}^t(\ker P_\w) = \ker P_{\theta^t\w}$ for all $t \in \T^+$ and all $\w \in \W$;
   \item $\Phi_{\w}^t|_{\ker P_\w} \colon \ker P_\w \to \ker
      P_{\theta^t\w}$ is invertible for all $t \in \T^+$ and all $\w \in \W$;
   \item setting $Q_{\w} = \Id-P_{\w}$, the map
      \begin{equation*}
        (t,\w) \mapsto (\Phi_{\w}^t|_{\ker P_\w})^{-1}Q_{\theta^t \w}x
      \end{equation*}
      is $\prts{\cB(\T^+) \otimes \Sgm,\cB(X)}$-measurable (resp. Bochner measurable) for every $x \in X$.
\end{enumerate}
In order to simplify the notation we will denote by $\phiw{t}$ the inverse of
   $$\Phi_{\w}^t|_{\ker P_\w} \colon \ker P_\w \to \ker P_{\theta^t\w}.$$
In these conditions we define the linear subspaces $E_\w =P_\w (X)$ and $F_\w= \ker P_\w = Q_\w(X)$ and, as
usual, we identify the vector spaces $E_\w \times F_\w$ and $E_\w \oplus F_\w$
as the same vector space.

Given functions $\alpha^+, \alpha^-:\T^+ \x \W  \to \, ]0,+\infty[$, and
denoting $\alpha^+(t,\w)$ and $\alpha^-(t,\w)$ by $\alpha^+_{t,\w}$ and $\alpha^-_{t,\w}$,
respectively, we say that a measurable (resp. Bochner measurable) linear RDS $\Phi$ admits a \textit{generalized dichotomy with bounds $\alpha^+$ and $\alpha^-$} if it admits a measurable (resp. Bochner measurable) $P$-invariant splitting such that
\begin{enumerate}[\lb=$($D$\arabic*)$,\lm=13mm]
   \item \label{eq:split4} $\|\Phi_{\w}^t P_\w\| \le \alpha^+_{t,\w}$ for all $(t,\w) \in \T^+ \times \W$;
   \item \label{eq:split5} $\|\phiw{t} Q_{\theta^t\w}\| \le
      \alpha^-_{t,\theta^t\w}$  for all $(t,\w) \in \T^+ \times \W$.
\end{enumerate}

The following example corresponds to the usual tempered exponential dichotomies.
We recall that a random variable $K\colon\W\to[1,+\infty[$ is \emph{tempered} if
\begin{equation}\label{eq:tempered}
    \lbd_{K,\gamma,\w}:=\sup_{t \in \T} \prtsr{\e^{-\gamma |t|}  K(\theta^t w)} < +\infty
\end{equation}
for all $\gamma >0$ and all $\w \in \W$. We notice that a weaker condition for tempered random variables is often used:
    \begin{equation}\label{eq:tempered:lim}
             \lim_{t\to\pm\infty}\frac1{\abs{t}} \log K(\theta^t\w)=0
             \text{ for all $\w\in\W$}.
    \end{equation}
In the discrete time case the conditions are equivalent, however in the continuous time situation we will use~\eqref{eq:tempered} in order to deal with the computations in the proof of Corollary~\ref{corollary:tempered:cont}.

\begin{example}[(Non)uniformly (pseudo-)hyperbolic]
   \label{ex:nonuniformly:pseudohyperbolic}
   Let $\Theta \equiv (\W,\Sgm,\mu,\theta)$ be a metric dynamical system and let $X$ be a Banach space. We say that a measurable linear RDS $\Phi$ on $X$ over $\Theta$ admits a \emph{tempered exponential dichotomy} if it admits a generalized dichotomy with bounds
   \begin{equation*}
    \alpha^+_{t,\w} = K(\w) \e^{a(\w)t}
    \ \ \ \text{ and  } \ \ \
    \alpha^-_{t,\theta^t\w} = K(\theta^t\w) \e^{b(\w)t},
   \end{equation*}
   for some tempered random variable $K:\W \to [1,+\infty[$ and $\theta$-invariant random variables $a, b \colon \W \to \R$, i.e. , satisfying $a(\theta^t \w) = a(\w)$ and $b(\theta^t \w) = b(\w)$ for all $\w\in\W$ and all $t\in\T^+$.
   Tempered dichotomies are of particular interest since they can be obtained throughout Oseledets' Multiplicative Ergodic Theorem. See \cite[Theorem 3.5]{Lian_Lu-book-2010} and \cite[Theorem 3.4]{Caraballo_Duan_Lu_Schmalfuss-ANS-2010}. The common situations occurs when $a, b$ and $K$ are constant (uniformly hyperbolic), $a(\w)=b(\w)<0$ (nonuniformly hyperbolic) or $a(\w)+b(\w)<0$ (nonuniformly pseudo-hy\-per\-bo\-lic).
\end{example}

We give now another example of dichotomies that illustrate situations far beyond the exponential growth rates.

\begin{example}\label{ex:2}
   Let $\Theta\equiv(\W,\Sgm,\mu,\theta)$ be a measure-preserving dynamical system. Consider measurable functions
      $$ \varphi, \psi  \colon \T^+ \x \W \to ]0,+\infty[$$
   such that
   \begin{equation*}
      \varphi(t+s,\w) = \varphi(t,\theta^s \w) \varphi (s,\w)
      \ \ \ \text{ and } \ \ \
      \psi(t+s,\w) = \psi(t,\theta^s \w) \psi (s,\w)
   \end{equation*}
   for all $t,s \in \T^+$ and all $\w \in \W$.
   Let $X=\R^2$, equipped with the maximum norm, let $K \colon \W \to [1,+\infty[$ be a random variable and consider the complementary projections $P_\w, Q_\w \colon \R^2 \to \R^2$ given by
   \begin{equation*}
      P_\w(x_1,x_2)=(x_1+(K(\w)-1)x_2, 0)
      \ \ \ \text{ and }   \ \ \
      Q_\w(x_1,x_2)=((1-K(\w))x_2, x_2).
   \end{equation*}
   It is easy to see that
   \begin{equation*}
      P_{\overline \w} P_\w = P_\w, \ \ \
      Q_{\overline \w} Q_\w = Q_{\overline \w}, \ \ \
      P_\w Q_\w = 0 \ \ \ \text{ and } \ \ \
      Q_{\overline \w} P_\w = 0
   \end{equation*}
   for all $\w, {\overline \w} \in \W$, and these equalities imply that $\Phi\colon\T^+\x\W\x \R^2\to\R^2$ defined by
   \begin{equation*}
      \Phi_\w^t
      = \varphi(t,\w) \, P_\w
         + \dfrac{K(\w)}{K(\theta^t \w)} \dfrac{1}{\psi(t,w)} \, Q_{\theta^t \w}
   \end{equation*}
   is a measurable linear RDS over $\Theta$ that admits a measurable $P$-invariant splitting. Moreover,
   \begin{equation*}
      \|\Phi^t_\w P_\w\|
      = \varphi(t,\w) \|P_\w\|
      = K(\w) \varphi(t,w),
   \end{equation*}
   and since $\phiw{t} Q_{\theta^t \w} = \dfrac{K(\theta^t \w)}{K(\w)} \psi(t,\w) Q_\w$ and $\|Q_\w\| = \maxs{K(\w)-1,1} \le K(\w)$ we have
   \begin{equation*}
      \|\phiw{t} Q_{\theta^t w}\|
      = \dfrac{K(\theta^t \w)}{K(\w)} \psi(t,\w) \|Q_\w\|
      \le K(\theta^t \w) \psi(t,\w).
   \end{equation*}
   Hence the linear RDS $\Phi$ admits a generalized dichotomy with bounds
   \begin{equation*}
      \alpha^+_{t,\w} = K(\w) \varphi(t,\w)
      \ \ \ \text{ and } \ \ \
      \alpha^-_{t,\theta^t \w} = K(\theta^t \w) \psi(t,\w).
   \end{equation*}

   An interesting case occurs when we consider $\T=\R$ and random variables $a,b: \W \to \R$ such that for all $\w$ the maps $s \mapsto a(\theta^s \w)$ and $s \mapsto b(\theta^s \w)$ are integrable in every interval $[0,t]$, $t\ge 0$,
   and make
   \begin{equation*}
      \varphi(t,\w) = \e^{\int_0^t a(\theta^s \w) \ds}
      \ \ \ \text{ and } \ \ \
      \psi(t,\w) = \e^{\int_0^t b(\theta^s \w) \ds}.
   \end{equation*}
   If we assume that $\Theta$ is a metric dynamical system, $K$ is a tempered random variable and letting $a,b:\W\to\R$ to be $\theta$-invariant random variables, we get tempered exponential dichotomies for this particular case. When $\T=\Z$ for the analogous case we take
   \begin{equation*}
      \varphi(n,\w) = \e^{S_a(n,\w)}
      \ \ \ \text{ and } \ \ \
      \psi(n,\w) = \e^{S_b(n,\w)},
   \end{equation*}
   where
   \begin{equation*}
     S_Z(n,\w)=\sum_{r=0}^{n-1} Z(\theta^r \w)
   \end{equation*}
   for a given random variable $Z\colon\W\to\R$.

   Another case occurs when we consider
   \begin{equation*}
      \varphi(t,\w)=\frac{a(\w)}{a(\theta^t \w)}
      \ \ \ \text{ and } \ \ \
      \psi(t,\w)=\frac{b(\w)}{b(\theta^t \w)},
   \end{equation*}
   where $a,b \colon \W \to \, ]0,+\infty[$ are random variable, which gives dichotomies with bounds
   \begin{equation*}
      \alpha^+_{t,\w} =  K(\w) \frac{a(\w)}{a(\theta^t \w)}
      \ \ \ \text{ and } \ \ \
      \alpha^-_{t,\theta^t \w} = K(\theta^t \w)\frac{b(\w)}{b(\theta^t \w)}
   \end{equation*}
   that can lead us to nonexponential growth rates. To see this take for the driving system the horizontal flow in $\R^2$ given by $\,\theta^t (x,y) = (x+t,y)$, which preserves the Lebesgue measure, and set:
   \begin{align*}
      & a(x,y)=(1+x^2)^{-\lambda(1+y^2)}\\
      & b(x,y)=(1+x^2)^{-\gamma(1+y^2)}\\
      & K(x,y)=C (1+x^2)^{\eps(1+y^2)},
   \end{align*}
   for some real constants $C, \lambda, \gamma, \eps$, with $C \ge 1$ and  $\eps \ge 0$. In this case we obtain a polynomial type dichotomy with bounds
      $$ \alpha^+_{t,(x,y)}
         = C\pfrac{1+(x+t)^2}{1+x^2}^{\lbd(1+y^2)} (1+x^2)^{\eps(1+y^2)}$$
   and
      $$ \alpha^-_{t,\theta^t (x,y)}
         =C \pfrac{1+(x+t)^2}{1+x^2}^{\gamma(1+y^2)} (1+(x+t)^2)^{\eps(1+y^2)}.$$
\end{example}

\subsection{Auxiliary spaces of functions}

Consider a measure preserving dynamical system $\Theta \equiv (\W, \Sgm, \mu, \theta)$, a Banach space $X$ and a measurable linear RDS $\Phi$ on $X$ over $\Theta$ that admits a dichotomy with bounds $\alpha^+$ and $\alpha^-$.

Let $\sF$ be the space of all functions $f \colon \W \x X \to X$ such that, denoting $f(\w,x)$ by $f_\w(x)$, satisfy
\begin{align}
   & \label{eq:w|->f_w(x):Borel}
      \w \mapsto f_\w(x) \text{ is $(\Sgm,\cB(X))$-measurable for every } x \in X;
\end{align}
and, for every $\w \in \W$,
\begin{align}
   & \label{eq:f_w(0)=0}
      f_\w(0)=0;\\
   & \label{eq:Lip(f_w)}
      \Lip(f_\w) = \sup\set{\dfrac{\|f_\w(x) - f_\w(y)\|}{\|x - y\|}
         \colon x,y \in X,\ x \ne y}<+\infty.
\end{align}
Clearly, from~\eqref{eq:Lip(f_w)} and~\eqref{eq:f_w(0)=0} we have for all $\w \in \W$ and all $x, y \in X$ that
\begin{align}
   & \label{ine:||f_w(x)-f_w(y)||<=Lip(f_w)||x-y||}
      \norm{f_\w(x) - f_\w(y)} \le \Lip(f_\w) \|x - y\|;\\
   & \label{ine:||f_w(x)||<=Lip(f_w)||x||}
      \norm{f_\w(x)} \le \Lip(f_\w) \|x\|.
\end{align}

We denote by $\sF^{(B)}$ the space of all functions $f \colon \W \x X \to X$ that satisfy~\eqref{eq:f_w(0)=0},~\eqref{eq:Lip(f_w)} and
\begin{equation}\label{eq:w|->f_w(x):Bochner}
   \w \mapsto f_\w(x) \text{ is Bochner measurable for every } x \in X.
\end{equation}
From Proposition~\ref{prop:Borel_x_Bochner} it is clear that~\eqref{eq:w|->f_w(x):Bochner} implies~\eqref{eq:w|->f_w(x):Borel} and thus $\sF^{(B)} \subseteq \sF$.

Set $\alpha=(\alpha^+,\alpha^-)$ and denote by  $\sF^{(B)}_{\alpha}$ the space of all functions $f\in\sF^{(B)}$ such that, for all $\w \in \W$, the maps
   \begin{equation}\label{eq:s|->a+_t-s,ths_w...:Lebesgue}
   s \mapsto \alpha^+_{t-s,\theta^s \w} \Lip(f_{\theta^s \w}) \alpha^+_{s,\w}
   \ \ \ \text{ and } \ \ \
   s \mapsto \alpha^-_{s,\theta^s \w} \Lip(f_{\theta^s \w}) \alpha^+_{s,\w}
\end{equation}
are measurable on every interval $[0,t]$, $t\ge 0$.

Let
   $$ \cH =
      \set{(t,\w,\xi)\in\T^+\x \W\x X \colon
      \xi \in E_{\w}}\subset\T^+\x\W\x X.$$
Given $M > 0$, denote by $\cJ_M$ the space of all functions $h : \cH \to X$ such that, writing $h_{t,\w}(\xi)$ for $h(t,\w,\xi)$, satisfy
\begin{align}
   & \label{eq:(t,w)|->h_t,w(xi):Borel}
      (t,\w) \mapsto h_{t,w}(P_\w x)
      \text{ is  $(\cB(\T^+) \otimes \Sgm,\cB(X))$-measurable for all } x \in X;\\
   & \label{eq:h_n,w(0)=0}
      h_{t,\w}(0) =0
      \text{ for every } (t,\w) \in \T^+ \x \W;\\
   & \label{eq:h_0,w(xi)=xi}
      h_{0,\w}(\xi)=\xi
      \text{ for every } \w \in \W \text{ and every } \xi \in E_\w;\\
   & \label{h_n,w(xi):in:E(t^n(w))}
      h_{t,\w}(\xi) \in E_{\theta^t\w}
      \text{ for all } (t,\w, \xi) \in \cH;\\
   & \label{ine:||h_n,w(xi)-h_n,w(bxi)||<=M...}
      \|h_{t,\w}(\xi)-h_{t,\w}(\bar\xi)\| \le M \|\xi-\bar\xi\| \alpha^+_{t,\w}
      \text{ for all } (t,\w,\xi), (t,\w,\bxi) \in \cH.
\end{align}
From~\eqref{ine:||h_n,w(xi)-h_n,w(bxi)||<=M...} and~\eqref{eq:h_n,w(0)=0} it follows immediately that
\begin{equation}\label{ine:||h_n,w(xi)||<=M...}
   \|h_{t,\w}(\xi)\| \le M \|\xi\|\alpha^+_{t,\w}
      \text{ for all } (t,\w,\xi) \in \cH.
\end{equation}

It is straightforward that ${\cJ_M}$ equipped with the metric
\begin{equation} \label{def:d_1(x,y)}
   d_1(h,g)
   = \sup\set{\dfrac{\|h_{t,\w}(\xi)-g_{t,\w}(\xi)\|}{\alpha^+_{t,\w} \|\xi\|} \colon
      (t,\w) \in \T^+ \x \W, \ \xi \in E_\w \setm{0}}
\end{equation}
is a complete metric space. If in the definition of $\cJ_M$ we replace condition~\eqref{eq:(t,w)|->h_t,w(xi):Borel} by
\begin{equation}\label{eq:(t,w)|->h_t,w(xi):Bochner}
   (t,\w) \mapsto h_{t,w}(P_\w x)
   \text{ is  Bochner measurable for all } x \in X.
\end{equation}
we obtain a complete metric subspace of $\cJ_M$ that we denote by $\cJ^{(B)}_M$.

Let
\begin{equation*}
   \cG=\{(\w,\xi)\in\W\x X\colon \xi\in E_{\w}\}\subset\W\x X.
\end{equation*}
Given $N > 0$, we denote by $\cL_N$ the space of all functions $\phi: \cG \to X$ that, writing $\phi_\w(\xi)$ for $\phi(\w,\xi)$, satisfy the following conditions
\begin{align}
   & \label{eq:w|->phi_w(xi):Borel}
      \w \mapsto \phi_\w(P_\w x)
      \text{ is $(\Sgm,\cB(X))$-measurable for every } x \in X;\\
   &
      \phi_\w(0)=0 \text{ for every } \w \in \W; \label{eq:phi_w(0)=0} \\
   & \label{eq:phi_w(xi):in:F_w}
      \phi_\w(\xi) \in F_\w \text{ for every } (\w,\xi) \in \cG; \\
   & \label{eq:||phi_w(xi)-pi_w(bxi)||<=...}
      \|\phi_\w(\xi) - \phi_\w(\bxi)\| \le N \|\xi - \bxi\|
      \text{ for every } (\w,\xi), (\w, \bxi) \in \cG.
\end{align}
Making $\bar\xi = 0$ in~\eqref{eq:||phi_w(xi)-pi_w(bxi)||<=...}, by~\eqref{eq:phi_w(0)=0}, we have
\begin{equation} \label{ine:||phi_w(xi)||<=N||xi||}
   \|\phi_\w(\xi)\| \le N \|\xi\|
   \text{ for every  } (\w, \xi) \in \cG.
\end{equation}

We define a metric in $\cL_N$ by
\begin{equation}\label{def:d_2(phi,psi)}
   d_2(\phi,\psi)
   = \sups{\dfrac{\|\phi_\w(\xi) - \psi_\w(\xi)\|}{\|\xi\|}
      \colon \xi \in E_\w \setm{0}, \ \w \in \W}
\end{equation}
and notice that $(\cL_N,d_2)$ is a complete metric space.

As before, if in the definition of the space $\cL_N$ we replace~\eqref{eq:w|->phi_w(xi):Borel} by the stronger condition
\begin{equation}\label{eq:w|->phi_w(xi):Bochner}
   \w \mapsto \phi_\w(P_\w x)
   \text{ is Bochner measurable for every } x \in X
\end{equation}
we obtain a complete metric subspace of $\cL_N$ that we denote by $\cL_N^{(B)}$.

Denote now by $\cX_{M,N}$ and  $\cX^{(B)}_{M,N}$  the spaces $\cJ_M \times
\cL_N$ and $\cJ^{(B)}_M \x \cL^{(B)}_N$, respectively. It is obvious that $\cX_{M,N}$ and $\cX^{(B)}_{M,N}$ equipped with the metric
   $$ d\prts{(h,\phi),(g,\psi)} = d_1(h,g) + d_2(\phi,\psi)$$
are complete metric spaces.

\section{Continuous time} \label{sec:cont_time}

Throughout this section we consider $\T = \R$ and set $\R_0^+=\T^+$.

\subsection{Main theorem (continuous time)}\label{section:main:cont}

Consider a Bochner measurable linear RDS $\Phi$ on a Banach space $X$ over a measure-preserving dynamical system $\Theta \equiv (\W,\Sgm,\mu,\theta)$ that admits a generalized dichotomy with bounds $\alpha^+$ and $\alpha^-$ and let $f \in \sF^{(B)}_\alpha$. We will be interested on maps $\Psi\colon \R_0^+ \x \W \x X \to X$ such that $(t,\w) \mapsto \Psi(t,\w,x)$ is Bochner measurable for all $x \in X$,
\begin{equation}\label{eq:Psi:cont}
   \Psi_\w^t x
   = \Phi_\w^t x
      + \dint_0^t \Phi_{\theta^{s}\w}^{t-s} f_{\theta^s \w}(\Psi_\w^s x)\,ds
\end{equation}
for all $t\in\T^+$, $x \in X$ and $\w \in \W$. We shall always assume that $\Psi(\cdot,\w,x)$ is the unique solution of equation
\begin{equation}\label{eq:u(t)=...}
   u(t) = \Phi_\w^t x
      + \dint_0^t \Phi_{\theta^{s}\w}^{t-s} f_{\theta^s \w}(u(s))\,ds,
\end{equation}
which implies that $\Psi$ is a Bochner measurable RDS on $X$ over $\Theta$ (see~\cite[Proposition 2.1]{Barreira_Valls-SD-2018} and also~\cite[Theorem 2.2.1]{Arnold-RDS-1998}).

We also define
\begin{equation}\label{def:sigma}
   \sgm
   = \sup\limits_{(t,\w) \in \R_0^+ \x \W} \
      \dfrac{1}{\alpha^+_{t,\w}}
      \dint_{0}^{t} \alpha^+_{t-s,\theta^{s}\w} \Lip(f_{\theta^s\w}) \alpha^+_{s,\w}\ds
\end{equation}
and
\begin{equation}\label{def:tau}
   \tau
   = \sup_{\w\in\W} \dint_{0}^{+\infty} \
      \alpha^-_{s,\theta^s\w} \Lip(f_{\theta^s\w}) \alpha^+_{s,\w}\ds
\end{equation}
and given $\phi \in \cL_N$ and $\w \in \W$ we denote the \emph{graph} of $\phi_\w$ by
\begin{equation*}
   \cV_{\phi,\w}
   = \set{(\xi,\phi_\w(\xi)): \xi \in E_\w}.
\end{equation*}

\begin{theorem} \label{thm:global}
   Let $\Theta \equiv (\W,\Sgm,\mu,\theta)$ be a measure-preserving dynamical system and let $X$ be a Banach space. Consider a Bochner measurable linear RDS $\Phi$ on $X$ over $\Theta$ that admits a generalized dichotomy with bounds $\alpha^+$ and $\alpha^-$ and let $f \in \sF^{(B)}_\alpha$. Assume that $\Psi$ is a Bochner measurable RDS such that~\eqref{eq:u(t)=...} has a unique solution $\Psi(\cdot,\w,x)$ for every $\w \in \W$ and every $x \in X$. If
   \begin{equation} \label{eq:CondicaoTeo}
      \lim_{t \to +\infty} \alpha^+_{t,\w} \alpha^-_{t,\theta^t\w} = 0\ \ \ \text{ for all } \w\in\W
   \end{equation}
   and
   \begin{equation}\label{eq:CondicaoTeo:alpha+tau<1/2}
      \sgm +  \tau < \frac12,
   \end{equation}
   then there exist $N \in\ ]0,1[$ and a unique $\phi \in \cL^{(B)}_N$ such that
   \begin{equation} \label{thm:global:invar}
      \Psi_{\w}^t(\cV_{\phi,\w})
      \subseteq \cV_{\phi,\theta^t\w}\ \ \ \text{ for all } (t,\w) \in \R_0^+ \x \W.
     \end{equation}
   Furthermore, there is $C \in\ ]0,4[$ depending on $\sgm$ and $\tau$ such that
   \begin{equation}\label{thm:ineq:norm:F_mn(xi...)-F_mn(barxi...)}
      \|\Psi_{\w}^t(\xi,\phi_\w(\xi)) - \Psi_{\w}^t(\bxi,\phi_\w(\bxi))\|
      \le C \alpha^+_{t,\w} \, \|\xi - \bxi\|
   \end{equation}
   for  every $(t,\w, \xi), (t,\w, \bxi) \in \cH$.
\end{theorem}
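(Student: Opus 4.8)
The plan is to obtain $\phi$ --- together with the flow it induces on the manifold --- as a fixed point of a Lyapunov--Perron operator on the complete metric space $\cX^{(B)}_{M,N}$, for a suitable choice of $M>0$ and $N\in\,]0,1[$. If $\cV_{\phi,\w}$ is $\Psi$-invariant, then, writing $h_{t,\w}(\xi):=P_{\theta^t\w}\Psi_\w^t(\xi,\phi_\w(\xi))$, one must have $\Psi_\w^t(\xi,\phi_\w(\xi))=(h_{t,\w}(\xi),\phi_{\theta^t\w}(h_{t,\w}(\xi)))$. Projecting~\eqref{eq:u(t)=...} with $x=(\xi,\phi_\w(\xi))$ onto $E_{\theta^t\w}$ and onto $F_{\theta^t\w}$, and using $P_{\theta^t\w}\Phi_\w^t=\Phi_\w^tP_\w$ and the analogous identities for $Q$ and for $\Phi_{\theta^s\w}^{t-s}$, the $E$-component gives
\[
   h_{t,\w}(\xi)=\Phi_\w^t\xi+\dint_0^t\Phi_{\theta^s\w}^{t-s}P_{\theta^s\w}f_{\theta^s\w}\prts{h_{s,\w}(\xi),\phi_{\theta^s\w}(h_{s,\w}(\xi))}\ds,
\]
while, for the $F$-component, applying $\phiw{t}$, integrating, and letting $t\to+\infty$ kills the boundary term, since $\norm{\phiw{t}\phi_{\theta^t\w}(h_{t,\w}(\xi))}\le NM\,\alpha^+_{t,\w}\alpha^-_{t,\theta^t\w}\norm{\xi}\to0$ by~\eqref{eq:CondicaoTeo}; this yields
\[
   \phi_\w(\xi)=-\dint_0^{+\infty}\phiw{s}Q_{\theta^s\w}f_{\theta^s\w}\prts{h_{s,\w}(\xi),\phi_{\theta^s\w}(h_{s,\w}(\xi))}\ds.
\]
I would then let $\cT(h,\phi)=(\widehat h,\widehat\phi)$ be the operator defined by the right-hand sides, so that a fixed point of $\cT$ in $\cX^{(B)}_{M,N}$ is exactly what is needed.

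The second step is to check that $\cT$ is well defined and maps $\cX^{(B)}_{M,N}$ into itself. The norms of the two integrands are bounded by $\alpha^+_{t-s,\theta^s\w}\Lip(f_{\theta^s\w})(1+N)M\alpha^+_{s,\w}\norm{\xi}$ and $\alpha^-_{s,\theta^s\w}\Lip(f_{\theta^s\w})(1+N)M\alpha^+_{s,\w}\norm{\xi}$, which are Lebesgue integrable thanks to~\eqref{eq:s|->a+_t-s,ths_w...:Lebesgue} and the finiteness of $\sgm$ and $\tau$, so the Bochner integrals exist by Proposition~\ref{prop:Bochner:2}. Bochner measurability in the parameters (conditions~\eqref{eq:(t,w)|->h_t,w(xi):Bochner} and~\eqref{eq:w|->phi_w(xi):Bochner}) follows from the Bochner measurability of $\Phi$, of the $P$-invariant splitting and of $f$, via Corollary~\ref{cor:Bmeas_o_meas_is_Bmeas} and the closedness of $E_\w$ and $F_\w$; the normalizations $\widehat h_{0,\w}=\Id$, $\widehat h_{t,\w}(0)=0$, $\widehat\phi_\w(0)=0$ and the memberships $\widehat h_{t,\w}(\xi)\in E_{\theta^t\w}$, $\widehat\phi_\w(\xi)\in F_\w$ are immediate. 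Using~\eqref{ine:||h_n,w(xi)-h_n,w(bxi)||<=M...}, \eqref{eq:||phi_w(xi)-pi_w(bxi)||<=...} and the definitions of $\sgm$ and $\tau$, one gets
\[
   \norm{\widehat h_{t,\w}(\xi)-\widehat h_{t,\w}(\bxi)}\le\prts{1+(1+N)M\sgm}\alpha^+_{t,\w}\norm{\xi-\bxi},\qquad
   \norm{\widehat\phi_\w(\xi)-\widehat\phi_\w(\bxi)}\le(1+N)M\tau\,\norm{\xi-\bxi},
\]
so $\cT(\cX^{(B)}_{M,N})\subseteq\cX^{(B)}_{M,N}$ as soon as $M(1-(1+N)\sgm)\ge1$ and $(1+N)M\tau\le N$; a short computation shows that such a pair with $N\in\,]0,1[$ and $M<2$ exists precisely because $\sgm+\tau<\tfrac12$.

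The third step is the contraction estimate. Passing to the equivalent metric $d_1(h,g)+M\,d_2(\phi,\psi)$ on $\cX^{(B)}_{M,N}$ and repeating the bookkeeping gives $d\prts{\cT(h,\phi),\cT(g,\psi)}\le(\sgm+M\tau)(1+N)\,d\prts{(h,\phi),(g,\psi)}$, and the constraints on $M,N$ together with $\sgm+\tau<\tfrac12$ force $(\sgm+M\tau)(1+N)<1$; by the Banach Fixed Point Theorem $\cT$ has a unique fixed point $(h,\phi)\in\cX^{(B)}_{M,N}$. To conclude that this $\phi$ is the desired graph I would verify that $t\mapsto(h_{t,\w}(\xi),\phi_{\theta^t\w}(h_{t,\w}(\xi)))$ solves~\eqref{eq:u(t)=...} with initial datum $(\xi,\phi_\w(\xi))$: the $E$-projection is the first fixed point equation, and the $F$-projection follows by applying $\phiw{t}$ and invoking the second fixed point equation at the data $(\theta^t\w,h_{t,\w}(\xi))$, which in turn uses the cocycle identity $h_{\rho+t,\w}(\xi)=h_{\rho,\theta^t\w}(h_{t,\w}(\xi))$; the latter holds because both sides satisfy, in $\rho$, the same integral equation, whose solution is unique since $h\mapsto\widehat h$ is, for $\phi$ fixed, a contraction with factor $\sgm(1+N)<1$. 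By the assumed uniqueness of solutions of~\eqref{eq:u(t)=...} this gives $\Psi_\w^t(\xi,\phi_\w(\xi))=(h_{t,\w}(\xi),\phi_{\theta^t\w}(h_{t,\w}(\xi)))\in\cV_{\phi,\theta^t\w}$, i.e.~\eqref{thm:global:invar}. Uniqueness of $\phi$ in $\cL^{(B)}_N$ follows because any $\psi\in\cL^{(B)}_N$ with invariant graph produces, via $g_{t,\w}(\xi):=P_{\theta^t\w}\Psi_\w^t(\xi,\psi_\w(\xi))$, a fixed point $(g,\psi)$ of $\cT$, hence $\psi=\phi$. Finally, from $\Psi_\w^t(\xi,\phi_\w(\xi))-\Psi_\w^t(\bxi,\phi_\w(\bxi))=\bigl(h_{t,\w}(\xi)-h_{t,\w}(\bxi),\ \phi_{\theta^t\w}(h_{t,\w}(\xi))-\phi_{\theta^t\w}(h_{t,\w}(\bxi))\bigr)$ and~\eqref{eq:||phi_w(xi)-pi_w(bxi)||<=...}, \eqref{ine:||h_n,w(xi)-h_n,w(bxi)||<=M...} one obtains~\eqref{thm:ineq:norm:F_mn(xi...)-F_mn(barxi...)} with $C=(1+N)M$, and since $N<1$ and $M<2$ we get $C<4$.

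The main obstacle I expect is twofold. First, the measurability and integrability bookkeeping: because $X$ is not assumed separable, one must argue with care that all the parameter-dependent Bochner integrals defining $\widehat h$ and $\widehat\phi$ exist and depend Bochner measurably on $(t,\w)$ and on $\w$ --- this is exactly where Propositions~\ref{prop:Borel_x_Bochner}--\ref{prop:Bochner:2} and Corollary~\ref{cor:Bmeas_o_meas_is_Bmeas} are essential, and it is the part singled out in the introduction as needing special attention. Second, the constant juggling: one has to pin down $M$ and $N$ so that the self-mapping property, the contraction, $N\in\,]0,1[$ and $C<4$ all hold simultaneously under the single hypothesis $\sgm+\tau<\tfrac12$. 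The rest is the classical Lyapunov--Perron mechanism together with the identification of the abstract fixed point with the genuine dynamics through the uniqueness of $\Psi$.
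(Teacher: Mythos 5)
Your proposal follows essentially the same Lyapunov--Perron strategy as the paper: project the integral equation onto the $E$- and $F$-components, obtain the two fixed-point equations, observe that the boundary term vanishes by~\eqref{eq:CondicaoTeo}, handle measurability and integrability via Propositions~\ref{prop:Borel_x_Bochner}--\ref{prop:Bochner:2} and Corollary~\ref{cor:Bmeas_o_meas_is_Bmeas}, and close with the Banach Fixed Point Theorem on $\cX^{(B)}_{M,N}$. The self-mapping constraints, the asymptotic inequality with $C=(1+N)M<4$, and the identification of the fixed point with the invariant graph all match. The only genuine deviation from the paper is your argument for the cocycle identity $h_{\rho+t,\w}(\xi)=h_{\rho,\theta^t\w}(h_{t,\w}(\xi))$: you get it from the contractivity of $h\mapsto J(h,\phi)$ at fixed $\phi$, whereas the paper appeals to the assumed uniqueness of solutions of~\eqref{eq:u(t)=...}. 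Both work.

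However, your contraction step has a real gap. You pass to the metric $d_1+M\,d_2$ and claim the factor $(\sgm+M\tau)(1+N)<1$ is forced by the self-mapping constraints and $\sgm+\tau<\tfrac12$. It is not. Using the two self-mapping inequalities one only gets $(\sgm+M\tau)(1+N)\le 1-\tfrac1M+N$, which is strictly less than $1$ if and only if $MN<1$, and that fails as $\sgm+\tau\to\tfrac12$. Concretely, take $\sgm=\tau=0.24$: the self-mapping constraints force $N\in[\tfrac23,1[$ and, with the tightest admissible choice $M=\tfrac53$, $N=\tfrac23$, one finds
\[
  (\sgm+M\tau)(1+N)=0.24\Bigl(1+\tfrac53\Bigr)\Bigl(1+\tfrac23\Bigr)=0.24\cdot\tfrac{40}{9}\approx1.07>1,
\]
and the situation only worsens for other admissible $(M,N)$. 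The paper instead uses the unweighted metric $d_1+d_2$ and the estimate
\[
  d\bigl(T(h,\phi),T(g,\psi)\bigr)\le(\sgm+\tau)\max\{1+N,M\}\,d\bigl((h,\phi),(g,\psi)\bigr),
\]
which is $<1$ because $\sgm+\tau<\tfrac12$ and both $1+N<2$ and $M<2$. (Alternatively, the weighted metric $(1+N)d_1+M\,d_2$ would give the factor $(1+N)\sgm+M\tau=2-\tfrac1M-\tfrac1{1+N}<1$; your weighting of $d_1$ by $1$ instead of $1+N$ is what breaks the estimate.) You should replace your contraction argument by one of these two.
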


The proof of Theorem~\ref{thm:global} will be given in Subsection~\ref{Proof of Theorem continuous}.

\subsection{Corollaries}\label{s:corollaries:cont}

In this subsection we are going to state some corollaries of Theorem~\ref{thm:global} that include the tempered exponential dichotomies, as well results covering the different situations given in Example~\ref{ex:2}.

Throughout this subsection we consider a real number $\delta \in\, ]0,1/4[$ and a random variable $G \colon \W \to \, ]0,+\infty[$ such that
\begin{equation*}
   \int_{-\infty}^{+\infty} G(\theta^s\w)\,ds \le 1
   \ \ \ \text{ for all } \w \in \W.
\end{equation*}

\begin{corollary}[{Tempered exponential dichotomies}]\label{corollary:tempered:cont}
   Let $\Theta \equiv (\W,\Sgm,\mu,\theta)$ be a metric dynamical system and let $X$ be a Banach space. Consider a Bochner measurable linear RDS $\Phi$ on $X$ over $\Theta$ that admits a tempered exponential dichotomy with bounds
   \begin{equation*}
       \alpha^+_{t,\w} = K(\w) \e^{a(\w)t}
       \ \ \ \text{ and  } \ \ \
       \alpha^-_{t,\theta^t\w} = K(\theta^t\w) \e^{b(\w)t},
    \end{equation*}
    such that $a(\w)+b(\w) < 0$
    and let $f \in \sF^{(B)}_\alpha$. Assume that $\Psi$ is a Bochner measurable RDS such that~\eqref{eq:u(t)=...} has a unique solution $\Psi(\cdot,\w,x)$ for every $\w \in \W$ and every $x \in X$. Consider a $\theta$-invariant random variable $\gamma(\w)>0$ satisfying $a(\w) + b(\w)+\gamma(\w)<0$.
    If
   \begin{equation*}
      \Lip(f_\w)
      \le \frac{\delta}{K(\w)}\mins{G(\w),
      \frac{\abs{a(\w) + b(\w)+\gamma(\w)}}{\lbd_{K,\gamma(\w),\w}}}\ \ \ \text{ for all $\w \in \W$, }
   \end{equation*}
   then the same conclusions of Theorem~\ref{thm:global} hold.
\end{corollary}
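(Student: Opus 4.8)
The plan is to deduce Corollary~\ref{corollary:tempered:cont} directly from Theorem~\ref{thm:global} by checking that, under the stated hypothesis on $\Lip(f_\w)$, the two structural conditions \eqref{eq:CondicaoTeo} and \eqref{eq:CondicaoTeo:alpha+tau<1/2} are satisfied for the tempered exponential bounds $\alpha^+_{t,\w}=K(\w)\e^{a(\w)t}$ and $\alpha^-_{t,\theta^t\w}=K(\theta^t\w)\e^{b(\w)t}$. The hypothesis of the corollary is precisely engineered so that $\sgm$ and $\tau$, as defined in \eqref{def:sigma} and \eqref{def:tau}, are each bounded by $\delta<1/4$, which gives $\sgm+\tau<1/2$; and the decay condition \eqref{eq:CondicaoTeo} follows from $a(\w)+b(\w)<0$ together with the temperedness of $K$. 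The only genuine work is the integral estimates, which exploit the $\theta$-invariance of $a$, $b$, $\gamma$ and the definition of $\lbd_{K,\gamma,\w}$ in \eqref{eq:tempered}.

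First I would verify \eqref{eq:CondicaoTeo}: since $a,b$ are $\theta$-invariant, $\alpha^+_{t,\w}\alpha^-_{t,\theta^t\w}=K(\w)K(\theta^t\w)\e^{(a(\w)+b(\w))t}$. Writing $c(\w)=-(a(\w)+b(\w))>0$ and picking any $\gamma$ with $0<\gamma(\w)<c(\w)$ (indeed the corollary supplies such a $\gamma$), temperedness \eqref{eq:tempered} gives $K(\theta^t\w)\le \lbd_{K,\gamma(\w),\w}\,\e^{\gamma(\w)t}$ for $t\ge 0$, so the product is bounded by $K(\w)\lbd_{K,\gamma(\w),\w}\,\e^{-(c(\w)-\gamma(\w))t}\to 0$. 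Note that the corollary's condition $a(\w)+b(\w)+\gamma(\w)<0$ is exactly $\gamma(\w)<c(\w)$, so this step is immediate.

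Next I would estimate $\tau$. Using again $\theta$-invariance of $a$ and $b$, the integrand in \eqref{def:tau} is
\[
   \alpha^-_{s,\theta^s\w}\Lip(f_{\theta^s\w})\alpha^+_{s,\w}
   = K(\theta^s\w)\e^{b(\w)s}\,\Lip(f_{\theta^s\w})\,K(\w)\e^{a(\w)s}.
\]
Apply the hypothesis $\Lip(f_{\theta^s\w})\le \dfrac{\delta}{K(\theta^s\w)}\cdot\dfrac{\abs{a(\w)+b(\w)+\gamma(\w)}}{\lbd_{K,\gamma(\theta^s\w),\theta^s\w}}$; here $a,b,\gamma$ being $\theta$-invariant means the numerator is $\abs{a(\theta^s\w)+b(\theta^s\w)+\gamma(\theta^s\w)}=\abs{a(\w)+b(\w)+\gamma(\w)}$, and $\lbd_{K,\gamma(\theta^s\w),\theta^s\w}=\sup_{r}[\e^{-\gamma(\w)\abs r}K(\theta^{r+s}\w)]\ge \e^{-\gamma(\w)s}K(\w)$ (taking $r=-s$), which bounds $K(\w)/\lbd_{K,\gamma(\theta^s\w),\theta^s\w}\le \e^{\gamma(\w)s}$. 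The factor $K(\theta^s\w)$ cancels, leaving the integrand bounded by $\delta\abs{a(\w)+b(\w)+\gamma(\w)}\,\e^{(a(\w)+b(\w)+\gamma(\w))s}$. Since $a(\w)+b(\w)+\gamma(\w)<0$, integrating over $[0,+\infty[$ gives exactly $\delta$, so $\tau\le\delta$.

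For $\sgm$ in \eqref{def:sigma} I would instead use the bound $\Lip(f_{\theta^s\w})\le \delta\,G(\theta^s\w)/K(\theta^s\w)$. The integrand $\dfrac{1}{\alpha^+_{t,\w}}\alpha^+_{t-s,\theta^s\w}\Lip(f_{\theta^s\w})\alpha^+_{s,\w}$ equals $\dfrac{1}{K(\w)\e^{a(\w)t}}\cdot K(\theta^s\w)\e^{a(\w)(t-s)}\cdot\Lip(f_{\theta^s\w})\cdot K(\w)\e^{a(\w)s}=K(\theta^s\w)\Lip(f_{\theta^s\w})\le \delta\,G(\theta^s\w)$, again using $\theta$-invariance of $a$ to collapse the exponentials. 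Hence $\sgm\le \delta\sup_\w\int_0^{+\infty}G(\theta^s\w)\ds\le\delta$ by the normalization $\int_{-\infty}^{+\infty}G(\theta^s\w)\ds\le 1$. Therefore $\sgm+\tau\le 2\delta<1/2$, so \eqref{eq:CondicaoTeo:alpha+tau<1/2} holds, and also $f\in\sF^{(B)}_\alpha$ must be checked — but this is part of the hypotheses. One should also confirm the measurability/integrability conditions \eqref{eq:s|->a+_t-s,ths_w...:Lebesgue} defining $\sF^{(B)}_\alpha$, which follow from Bochner measurability of $\Phi$ together with measurability of $s\mapsto\Lip(f_{\theta^s\w})$ — I would remark that this is implicit in the assumption $f\in\sF^{(B)}_\alpha$. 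Then Theorem~\ref{thm:global} applies verbatim and yields the stated conclusions. The main obstacle, such as it is, is bookkeeping the temperedness estimate $K(\w)/\lbd_{K,\gamma(\theta^s\w),\theta^s\w}\le\e^{\gamma(\w)s}$ correctly; everything else is cancellation made possible by $\theta$-invariance of $a,b,\gamma$.
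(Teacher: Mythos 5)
Your proposal is correct and follows essentially the same route as the paper's own proof: verify condition \eqref{eq:CondicaoTeo} via temperedness of $K$, then bound $\sgm$ by $\delta$ using the $G$-part of the Lipschitz hypothesis (the exponentials cancel by $\theta$-invariance of $a$) and bound $\tau$ by $\delta$ using the $\lbd_{K,\gamma(\w),\w}$-part together with the inequality $K(\w)\le \e^{\gamma(\w)s}\lbd_{K,\gamma(\w),\theta^s\w}$ for $s\ge0$, after which the integral $\int_0^{+\infty}\abs{a(\w)+b(\w)+\gamma(\w)}\e^{(a(\w)+b(\w)+\gamma(\w))s}\ds=1$ gives $\tau\le\delta$. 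The only differences are cosmetic — you spell out the derivation of the temperedness bound and the decay in \eqref{eq:CondicaoTeo} a bit more explicitly than the paper does — so this matches the intended argument.
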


\begin{proof}
   Since $K$ is tempered we have
   \begin{equation*}
       \lim_{t \to +\infty} \alpha^+_{t,\w} \alpha^-_{t,\theta^t\w}=\lim_{t \to +\infty}
      K(\w)K(\theta^t \w) \e^{(a(\w) + b(\w))t}
      = 0
         \ \ \ \text{ for all } \w \in \W
   \end{equation*}
and~\eqref{eq:CondicaoTeo} holds.
   From
   \begin{align*}
      \dfrac{1}{\alpha^+_{t,\w}} \int_{0}^{t}
         \alpha^+_{t-s,\theta^{s}\w} \Lip(f_{\theta^s\w}) \alpha^+_{s,\w}\ds
      & = \int_{0}^{t}
         K(\theta^{s} \w) \Lip(f_{\theta^s \w})\ds\\
      & \le \delta \int_{-\infty}^{+\infty} G(\theta^s\w)\ds\\
      & \le \delta,
   \end{align*}
   we conclude that $\sgm \le \delta$. On the other hand, since $K(\w)\leq \e^{\gamma(\w) \abs{s}} \lbd_{K,\gamma(\w),\theta^s\w}$ for every $\w\in\W$ and $s\in\R$, we have
   \begin{align*}
      & \dint_{0}^{+\infty}
         \alpha^-_{s,\theta^s\w} \Lip(f_{\theta^s\w}) \alpha^+_{s,\w} \ds\\
      & = \dint_{0}^{+\infty} K(\w)
         K(\theta^s\w)\e^{a(\w)+b(\w)} \Lip(f_{\theta^s\w})\,ds\\
      & \le \delta \dint_{0}^{+\infty}  -(a(\w)+b(\w)+\gamma(\w))\e^{(a(\w)+b(\w)+\gamma(\w))s} \,ds\\
      & \le \delta.
   \end{align*}
    This implies $\sigma+\tau \le 2\delta < 1/2$ and consequently we are in conditions to apply Theorem~\ref{thm:global}.
\end{proof}

We consider now dichotomies with bounds of the form
    \begin{equation}\label{eq:integral_tempered}
      \alpha^+_{t,\w} = K(\w) \e^{\int_0^t a(\theta^r\w)\,dr}
      \ \ \ \text{ and  } \ \ \
      \alpha^-_{t,\theta^t\w} = K(\theta^t \w) \e^{\int_0^t b(\theta^r\w)\,dr},
   \end{equation}
   where $K\colon\W\to[1,+\infty[$ is a random variable such that for all $\w\in\W$ and all $t\in\R$ the following derivative exists:
   \[
   d_\w(t)=\frac{d}{dt}[K(\theta^t\w)]=\lim_{h\to0}\dfrac{K(\theta^{t+h}\w)-K(\theta^{t} \w)}{h}.
   \]
We notice that for all $s\in\R$, $d_{\theta^s\w}(0)=d_\w(s)$.

\begin{corollary}\label{corollary:tempered:integral:cont}
   Let $\Theta \equiv (\W,\Sgm,\mu,\theta)$ be a measure-preserving dynamical system and let $X$ be a Banach space. Consider a Bochner measurable linear RDS $\Phi$ on $X$ over $\Theta$ that admits a dichotomy with bounds given by~\eqref{eq:integral_tempered} satisfying $d_\w(0)>K(\w)(a(\w)+b(\w))$ for all $\w \in \W$. Let $f \in \sF^{(B)}_\alpha$ be such that
   \begin{equation*}
      \Lip(f_\w)
      \le \dfrac{\delta}{K(\w)}
         \mins{G(\w), \dfrac{1}{K(\w)}
            \prts{\dfrac{d_\w(0)}{K(\w)} -(a(\w)+b(\w))}}
   \end{equation*}
   for all $\w \in \W$. Assume that $\Psi$ is a Bochner measurable RDS such that~\eqref{eq:u(t)=...} has a unique solution $\Psi(\cdot,\w,x)$ for every $\w \in \W$ and every $x \in X$. If
   \begin{equation}\label{eq:corol:exp_dich}
      \dlim_{t \to +\infty}
      K(\theta^t \w) \e^{\int_0^t a(\theta^r \w) + b(\theta^r \w)\dr}
      = 0
         \ \ \ \text{ for all } \w \in \W,
   \end{equation}
   then the same conclusions of Theorem~\ref{thm:global} hold.
\end{corollary}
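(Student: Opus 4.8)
The plan is to reduce Corollary~\ref{corollary:tempered:integral:cont} to Theorem~\ref{thm:global} by verifying its two hypotheses: condition~\eqref{eq:CondicaoTeo} and the smallness condition~\eqref{eq:CondicaoTeo:alpha+tau<1/2}. Condition~\eqref{eq:CondicaoTeo} is immediate, since
\[
   \alpha^+_{t,\w}\alpha^-_{t,\theta^t\w}
   = K(\w)\,K(\theta^t\w)\,\e^{\int_0^t a(\theta^r\w)+b(\theta^r\w)\dr}
\]
and $K(\w)$ is a fixed constant for each fixed $\w$, so hypothesis~\eqref{eq:corol:exp_dich} gives the limit. The work is therefore in bounding $\sgm$ and $\tau$.

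For $\sgm$, as in the proof of Corollary~\ref{corollary:tempered:cont}, the two exponential factors telescope: $\dfrac{1}{\alpha^+_{t,\w}}\,\alpha^+_{t-s,\theta^s\w}\,\alpha^+_{s,\w} = K(\theta^s\w)$, so
\[
   \dfrac{1}{\alpha^+_{t,\w}}\int_0^t \alpha^+_{t-s,\theta^s\w}\Lip(f_{\theta^s\w})\alpha^+_{s,\w}\ds
   = \int_0^t K(\theta^s\w)\Lip(f_{\theta^s\w})\ds
   \le \delta\int_{-\infty}^{+\infty} G(\theta^s\w)\ds \le \delta,
\]
using the first term in the minimum in the bound on $\Lip(f_\w)$ and the normalization of $G$. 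Hence $\sgm\le\delta$. For $\tau$ the integrand is
$\alpha^-_{s,\theta^s\w}\Lip(f_{\theta^s\w})\alpha^+_{s,\w} = K(\w)K(\theta^s\w)\e^{\int_0^s a(\theta^r\w)+b(\theta^r\w)\dr}\Lip(f_{\theta^s\w})$. Here I would use the second term in the minimum, $\Lip(f_{\theta^s\w})\le \dfrac{\delta}{K(\theta^s\w)^2}\bigl(\dfrac{d_{\theta^s\w}(0)}{K(\theta^s\w)}-(a(\theta^s\w)+b(\theta^s\w))\bigr)$, together with the identity $d_{\theta^s\w}(0)=d_\w(s)=\dfrac{d}{ds}K(\theta^s\w)$, to rewrite the integrand as a perfect derivative: one checks that
\[
   K(\w)K(\theta^s\w)\,\Lip(f_{\theta^s\w})\,\e^{\int_0^s a+b}
   \le -\,\delta\,K(\w)\,\dfrac{d}{ds}\!\left[\dfrac{\e^{\int_0^s a(\theta^r\w)+b(\theta^r\w)\dr}}{K(\theta^s\w)}\right].
\]
Integrating from $0$ to $+\infty$, the right-hand side telescopes to $\delta\,K(\w)\bigl(\tfrac1{K(\w)} - \lim_{s\to+\infty}\tfrac{\e^{\int_0^s a+b}}{K(\theta^s\w)}\bigr)$; since $K\ge1$ the subtracted limit is nonnegative (it is in fact controlled by~\eqref{eq:corol:exp_dich}), so $\tau\le\delta$. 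Combining, $\sgm+\tau\le 2\delta<1/2$ because $\delta\in\,]0,1/4[$, and Theorem~\ref{thm:global} applies.

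The one genuinely delicate point is the identification of the integrand in $\tau$ with a total derivative, i.e.\ verifying
\[
   \dfrac{d}{ds}\!\left[\dfrac{\e^{\int_0^s a(\theta^r\w)+b(\theta^r\w)\dr}}{K(\theta^s\w)}\right]
   = \dfrac{\e^{\int_0^s a+b}}{K(\theta^s\w)}\,(a(\theta^s\w)+b(\theta^s\w)) - \dfrac{\e^{\int_0^s a+b}}{K(\theta^s\w)^2}\,d_\w(s),
\]
which requires the differentiability hypothesis on $s\mapsto K(\theta^s\w)$ built into the statement (and implicitly that $s\mapsto a(\theta^s\w)+b(\theta^s\w)$ is locally integrable so the exponential is absolutely continuous). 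The sign condition $d_\w(0)>K(\w)(a(\w)+b(\w))$ — equivalently $d_\w(s)>K(\theta^s\w)(a(\theta^s\w)+b(\theta^s\w))$ for all $s$ — is exactly what makes the bracketed quantity in the derivative identity have the right sign so that the displayed inequality for the integrand holds with the factor $-\delta$; I would spell this step out carefully, as it is where the hypotheses are used most tightly, and then the remaining estimates are routine and parallel to Corollary~\ref{corollary:tempered:cont}.
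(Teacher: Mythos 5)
Your proposal is correct and follows the same route as the paper: check that~\eqref{eq:corol:exp_dich} gives~\eqref{eq:CondicaoTeo}, bound $\sgm\le\delta$ by the telescoping of $\alpha^+$ together with the $G$-term of the minimum, and bound $\tau\le\delta$ by recognizing the integrand as $-\delta K(\w)\,\frac{d}{ds}\bigl[\e^{\int_0^s(a+b)(\theta^r\w)\dr}/K(\theta^s\w)\bigr]$ via the product/quotient rule and $d_{\theta^s\w}(0)=d_\w(s)$, with the boundary term at $+\infty$ nonnegative (indeed vanishing, since $K\ge1$ and~\eqref{eq:corol:exp_dich}). Your explicit remark that the sign hypothesis $d_\w(0)>K(\w)(a(\w)+b(\w))$ is what makes the derivative negative, so the integrand is indeed dominated by a decreasing total derivative, is the right thing to emphasize and is slightly cleaner than the paper's handling of that boundary term.
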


\begin{proof}
   It is obvious that~\eqref{eq:corol:exp_dich} is equivalent to~\eqref{eq:CondicaoTeo} and as in the proof of Corollary~\ref{corollary:tempered:cont} we get $\sgm \le \delta$. On the other hand, since
   \begin{align*}
      &
         \frac{d}{dt}
         \prts{\dfrac{\e^{\int_0^t a(\theta^r \w) + b(\theta^r \w)\dr}}
         {K(\theta^{t}\w)}}\\
      &
         = - \dfrac{\e^{\int_0^t a(\theta^r \w) + b(\theta^r \w)\dr}}
            {K(\theta^t\w)}
            \prts{\dfrac{d_{\w}(t)}{K(\theta^t\w)}-(a(\theta^t \w)+b(\theta^t \w))},
   \end{align*}
   we have
   \begin{align*}
      & \dint_{0}^{+\infty}
         \alpha^-_{s,\theta^s\w} \Lip(f_{\theta^s\w}) \alpha^+_{s,\w} \ds\\
      & = K(\w) \dint_{0}^{+\infty}
         K(\theta^s\w)\e^{\int_0^s a(\theta^r \w)+b(\theta^r \w)) \dr}  \Lip(f_{\theta^s\w})\,ds\\
      & \le \delta K(\w)\dint_{0}^{+\infty}
         \dfrac{\e^{\int_0^s a(\theta^r \w)+b(\theta^r \w)) \dr}}{K(\theta^s\w)}\prts{\dfrac{d_{\theta^s\w}(0)}{K(\theta^s\w)}-(a(\w)+b(\w))}\,ds\\
      & = \delta
         - \delta\lim_{s \to +\infty}
         K(\w)\dfrac{\e^{\int_0^s a(\theta^r \w)+b(\theta^r \w)) \dr}}{K(\theta^{s}\w)}\\
      & \le \delta
         - \delta \lim_{s \to +\infty} K(\w)
         \e^{\int_0^s a(\theta^r \w)+b(\theta^r \w)) \dr}\\
      & = \delta.
   \end{align*}
    Thus $\sigma+\tau \le 2\delta < 1/2$ and we are in the conditions of Theorem~\ref{thm:global}.
\end{proof}

Given random variables $a, b \colon \W \to \R_0^+$ and $K \colon \W \to [1,+\infty[$, we consider now dichotomies with bounds of the form
\begin{equation}\label{dic:a,b,c}
   \alpha^+_{t,\w}
   = K(\w) \dfrac{a(\w)}{a(\theta^t\w)}
   \ \ \ \text{ and } \ \ \
   \alpha^-_{t,\theta^t\w}
   = K(\theta^t\w) \dfrac{b(\w)}{b(\theta^t\w)}
\end{equation}
and define the function $H_\w\colon\R\to\R$ by
\begin{equation*}
   H_\w(s)=-\dfrac{1}{a(\theta^s\w)b(\theta^s\w)K(\theta^s\w)}.
\end{equation*}

\begin{corollary}\label{corollary:a:b:cont}
   Let $\Theta \equiv (\W,\Sgm,\mu,\theta)$ be a measure-preserving dynamical system and let $X$ be a Banach space. Consider a Bochner measurable linear RDS $\Phi$ on $X$ over $\Theta$ that admits a dichotomy with bounds given by~\eqref{dic:a,b,c} and such that the map $H_\w$  is differentiable (except, eventually, on a finite set of points) and  $H_\w'(s)>0$ for all $s\in\R$. Let $f \in \sF^{(B)}_\alpha$ satisfying
   \begin{equation*}
      \Lip(f_\w)
      \le \frac{\delta}{K(\w)} \mins{G(\w),a(\w)b(\w)H_\w'(0)}
   \end{equation*}
   for all $\w \in \W$. Assume that $\Psi$ is a Bochner measurable RDS such that~\eqref{eq:u(t)=...} has a unique solution $\Psi(\cdot,\w,x)$ for every $\w \in \W$ and every $x \in X$. If
   \begin{equation}\label{eq:example:lim d(t)/a(t)b(t)=0:cont}
      \lim_ {t\to+\infty}\frac{K(\theta^t\w)}{a(\theta^t\w)b(\theta^t\w)}=0
      \text{ for all } \w \in \W,
   \end{equation}
   then the same conclusions of Theorem~\ref{thm:global} hold.
\end{corollary}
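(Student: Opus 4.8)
The plan is to verify the two hypotheses of Theorem~\ref{thm:global}, namely~\eqref{eq:CondicaoTeo} and~\eqref{eq:CondicaoTeo:alpha+tau<1/2}, following the same scheme as in the proofs of Corollaries~\ref{corollary:tempered:cont} and~\ref{corollary:tempered:integral:cont}: I would show $\sgm\le\delta$ and $\tau\le\delta$, so that $\sgm+\tau\le 2\delta<1/2$ because $\delta\in\,]0,1/4[$. For~\eqref{eq:CondicaoTeo}, expanding the bounds~\eqref{dic:a,b,c} gives
\begin{equation*}
   \alpha^+_{t,\w}\,\alpha^-_{t,\theta^t\w}
   = K(\w)\,a(\w)\,b(\w)\cdot\dfrac{K(\theta^t\w)}{a(\theta^t\w)\,b(\theta^t\w)},
\end{equation*}
and since $K(\w)a(\w)b(\w)$ does not depend on $t$, this condition is equivalent to~\eqref{eq:example:lim d(t)/a(t)b(t)=0:cont}.

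For $\sgm$, using the relation $a(\theta^{t-s}\theta^s\w)=a(\theta^t\w)$ one checks that $\alpha^+_{t-s,\theta^s\w}\,\alpha^+_{s,\w}/\alpha^+_{t,\w}=K(\theta^s\w)$, hence
\begin{equation*}
   \dfrac{1}{\alpha^+_{t,\w}}\dint_0^t\alpha^+_{t-s,\theta^s\w}\Lip(f_{\theta^s\w})\alpha^+_{s,\w}\ds
   =\dint_0^t K(\theta^s\w)\Lip(f_{\theta^s\w})\ds
   \le\delta\dint_{-\infty}^{+\infty}G(\theta^s\w)\ds\le\delta,
\end{equation*}
where the first inequality uses the term $G(\w)$ in the minimum bounding $\Lip(f_\w)$; thus $\sgm\le\delta$, exactly as in the earlier corollaries.

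The only genuinely new computation is the estimate for $\tau$. I would first note that $H_{\theta^s\w}(r)=H_\w(r+s)$, hence $H'_{\theta^s\w}(0)=H'_\w(s)$, and that $\alpha^-_{s,\theta^s\w}\,\alpha^+_{s,\w}=K(\w)a(\w)b(\w)\cdot K(\theta^s\w)/(a(\theta^s\w)b(\theta^s\w))$. Feeding in the term $a(\w)b(\w)H'_\w(0)$ of the minimum bounding $\Lip(f_\w)$ yields the pointwise estimate $\alpha^-_{s,\theta^s\w}\Lip(f_{\theta^s\w})\alpha^+_{s,\w}\le\delta\,K(\w)a(\w)b(\w)H'_\w(s)$. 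Since $H'_\w>0$, the function $H_\w$ is increasing, and integrating (the fundamental theorem of calculus applies since $H_\w$ is differentiable off a finite set) gives
\begin{equation*}
   \dint_0^{+\infty}\alpha^-_{s,\theta^s\w}\Lip(f_{\theta^s\w})\alpha^+_{s,\w}\ds
   \le\delta\,K(\w)a(\w)b(\w)\prts{\lim_{s\to+\infty}H_\w(s)-H_\w(0)}.
\end{equation*}
The key point is that $\lim_{s\to+\infty}H_\w(s)=0$: since $K\ge1$, for all $s$ we have
\begin{equation*}
   0\le\dfrac{1}{a(\theta^s\w)\,b(\theta^s\w)\,K(\theta^s\w)}\le\dfrac{K(\theta^s\w)}{a(\theta^s\w)\,b(\theta^s\w)},
\end{equation*}
and the right-hand side tends to $0$ by~\eqref{eq:example:lim d(t)/a(t)b(t)=0:cont}. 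Together with $H_\w(0)=-1/(a(\w)b(\w)K(\w))$ this collapses the bracket to $1/(a(\w)b(\w)K(\w))$, so $\tau\le\delta$.

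Hence $\sgm+\tau\le2\delta<1/2$ and Theorem~\ref{thm:global} applies verbatim. I expect the only slightly delicate points to be the justification of the fundamental theorem of calculus for $H_\w$ (given that it is only assumed differentiable off a finite set) and the vanishing of $\lim_{s\to+\infty}H_\w(s)$; everything else is the same algebraic bookkeeping as in the preceding corollaries.
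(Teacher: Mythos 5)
Your proposal is correct and follows essentially the same route as the paper: verify that~\eqref{eq:example:lim d(t)/a(t)b(t)=0:cont} is equivalent to~\eqref{eq:CondicaoTeo}, get $\sgm\le\delta$ exactly as in Corollary~\ref{corollary:tempered:cont}, and bound $\tau$ by rewriting $\alpha^-_{s,\theta^s\w}\Lip(f_{\theta^s\w})\alpha^+_{s,\w}\le\delta K(\w)a(\w)b(\w)H'_\w(s)$, integrating, and using $K\ge1$ plus the hypothesis to kill the boundary term at $+\infty$. The only stylistic difference is that you compute $\lim_{s\to+\infty}H_\w(s)=0$ before substituting whereas the paper substitutes first and evaluates the limit afterward; this has no mathematical content.
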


\begin{proof}
   From~\eqref{eq:example:lim d(t)/a(t)b(t)=0:cont} we have that~\eqref{eq:CondicaoTeo} holds. Let us check now~\eqref{eq:CondicaoTeo:alpha+tau<1/2}. Once again, as in the proof of Corollary~\ref{corollary:tempered:cont} we get
   $\sigma\le\delta$ and all $\w \in \W$. To estimate $\tau$ notice that $H_{\theta^r\w}(s)=H_\w(r+s)$ and $H_{\theta^r\w}'(s)=H_\w'(r+s)$ for all $r, s\in\R$ and all $\w \in \W$. In view of this we have
   \begin{align*}
      \int_0^{+\infty}
         \alpha^-_{s,\theta^s\w} \Lip(f_{\theta^s\w}) \alpha^+_{s,\w}\ds
      & \leq \delta a(\w)b(\w)K(\w) \int_0^{+\infty}  H_\w'(s) \ds\\
      &= \delta a(\w)b(\w)K(\w) \prtsr{\lim_{s\to +\infty}H_\w(s)-H_\w(0)}\\
      &= \delta + \delta a(\w)b(\w)K(\w)\lim_{s\to+\infty}H_\w(s).
   \end{align*}
   Since $K(\w)\ge1$ for all $\w\in\W$, it follows from~\eqref{eq:example:lim d(t)/a(t)b(t)=0:cont} that
   \begin{align*}
      \lim_{s\to+\infty}H_\w(s)
      & =\lim_{s\to+\infty}
         -\frac{1}{a(\theta^{s}\w) b(\theta^{s}\w) K(\theta^{s}\w)}\\
      & = \lim_{s\to+\infty}
         -\frac{K(\theta^s\w)}{a(\theta^s\w)b(\theta^s\w)} \frac{1}{K(\theta^s\w)^2}\\
      & = 0.
   \end{align*}
   Hence
   \[
      \int_0^{+\infty}
         \alpha^-_{s,\theta^s\w} \Lip(f_{\theta^s\w}) \alpha^+_{s,\w} \ds
     \leq \delta,
   \]
   which implies $\tau\le\delta$, and consequently $\sigma+\tau\le2\delta<1/2$.
\end{proof}

\subsection{Proof of Theorem~\ref{thm:global}}\label{Proof of Theorem continuous}

In this section we prove Theorem~\ref{thm:global}. We start by fixing suitable constants $M$ and $N$ to be used in this proof.

\begin{lemma}[{\cite[Lemma 5.1]{Bento_Costa-EJQTDE-2017}}] \label{lemma:M&N}
   If $\sgm$ and $\tau$ are positive real numbers such that $\sgm + \tau <1/2$,
   then there exist $M \in\ ]1,2[$ and $N \in\ ]0,1[$ such that
   \begin{equation}\label{def:M+N}
      \sgm = \dfrac{M-1}{M(1+N)}
      \ \ \ \text{ and } \ \ \
      \tau = \dfrac{N}{M(1+N)}.
   \end{equation}
\end{lemma}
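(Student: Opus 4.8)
The plan is to solve the two equations for $M$ and $N$ directly and then verify that the ranges $M\in\,]1,2[$ and $N\in\,]0,1[$ follow from the hypothesis $\sgm+\tau<1/2$. First I would add the two defining relations in~\eqref{def:M+N}: since
\[
   \sgm+\tau = \dfrac{M-1}{M(1+N)}+\dfrac{N}{M(1+N)} = \dfrac{M-1+N}{M(1+N)} = \dfrac{M(1+N)-(M-1)-\,\ldots}{\,\cdots\,},
\]
so it is cleaner to observe directly that $\sgm+\tau = \frac{M-1+N}{M(1+N)} = 1 - \frac{M + MN - M + 1 - N}{M(1+N)} = 1 - \frac{MN - N + 1}{M(1+N)}$; rather than pushing this, the slick route is to treat $M(1+N)$ as one unknown. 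Set $p = M(1+N)$. Then the two equations read $M-1 = \sgm p$ and $N = \tau p$, hence $M = 1+\sgm p$ and $1+N = 1+\tau p$, so $p = M(1+N) = (1+\sgm p)(1+\tau p)$. This is a quadratic in $p$:
\[
   \sgm\tau\, p^2 + (\sgm+\tau-1)\,p + 1 = 0 .
\]

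Next I would solve this quadratic and pick the correct root. If $\sgm\tau = 0$ it degenerates to a linear equation $p = \frac{1}{1-\sgm-\tau}$ (which is positive and $>1$ since $0<\sgm+\tau<1/2$); otherwise the discriminant is $(\sgm+\tau-1)^2 - 4\sgm\tau = (1-\sgm-\tau)^2 - 4\sgm\tau$. Using $4\sgm\tau \le (\sgm+\tau)^2 < \sgm+\tau < 1$ one checks the discriminant is strictly positive, in fact $(1-\sgm-\tau)^2 - 4\sgm\tau > (1-\sgm-\tau)^2 - (\sgm+\tau) > 0$ on $0<\sgm+\tau<1/2$, so there are two positive real roots; I would take the smaller one,
\[
   p = \dfrac{(1-\sgm-\tau) - \sqrt{(1-\sgm-\tau)^2 - 4\sgm\tau}}{2\sgm\tau},
\]
and then define $M = 1+\sgm p$ and $N = \tau p$. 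By construction $M - 1 = \sgm p = \sgm M(1+N)$ and $N = \tau p = \tau M(1+N)$, which is exactly~\eqref{def:M+N}.

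Finally I would verify the range constraints. Positivity of $\sgm,\tau,p$ gives $M>1$ and $N>0$ immediately. For the upper bounds, the key inequality is $p < 2$: plugging $p=2$ into the quadratic gives $4\sgm\tau + 2(\sgm+\tau) - 2 + 1 = 4\sgm\tau + 2(\sgm+\tau) - 1$, and since $\sgm+\tau<1/2$ and $4\sgm\tau \le (\sgm+\tau)^2 < 1/4$ this is $< 1/4 + 1 - 1 = 1/4 <$ ... hmm, I need it negative; more carefully $4\sgm\tau+2(\sgm+\tau)-1$: at $\sgm+\tau=1/2$ with $\sgm\tau$ maximal $=1/16$ this equals $1/4+1-1=1/4>0$, so the smaller root $p$ may or may not be below $2$ — I would instead bound $p$ via $p(1-\sgm-\tau) \ge \sgm\tau p^2 + 1 - (\sgm+\tau)p \cdot$ ... rather, from $\sgm\tau p^2 = (1-\sgm-\tau)p - 1$ and $p>0$ I get $p > \frac{1}{1-\sgm-\tau} > 1$ is the wrong direction; the honest argument is that for the \emph{smaller} root, $p \le \frac{2}{1-\sgm-\tau}\cdot\frac{1}{1+\sqrt{\,\cdot\,}}$-type estimates give $p < \frac{1}{1-\sgm-\tau} \cdot \frac{2}{1 + (1 - \frac{2\sgm\tau}{\cdots})} < 2$ when $\sgm+\tau<1/2$. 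The cleanest verification: show $M<2 \iff \sgm p < 1 \iff \sgm\tau p < \tau$, and $N<1 \iff \tau p < 1$; both follow from $p < \min\{1/\sgm, 1/\tau\}$, which I would derive by substituting $p=1/\sgm$ (resp. $1/\tau$) into the quadratic and checking the sign, using $\sgm+\tau<1/2$ throughout. I expect this last sign-checking on the endpoints — confirming that the \emph{correct} root lands strictly inside $]1,2[\times]0,1[$ rather than merely being positive — to be the only delicate point; everything else is algebra. (Since this is \cite[Lemma 5.1]{Bento_Costa-EJQTDE-2017}, one may alternatively just cite it, but the above gives a self-contained argument.)
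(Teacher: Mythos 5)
The paper gives no proof of this lemma --- it cites \cite[Lemma 5.1]{Bento_Costa-EJQTDE-2017} and moves on --- so there is no internal argument to compare against. Your self-contained derivation is correct and takes the natural route: setting $p = M(1+N)$ collapses the two defining relations to a single quadratic $q(p) := \sigma\tau\,p^2 + (\sigma+\tau-1)p + 1 = 0$, and the endpoint check you identify at the end is exactly what closes the argument. Indeed $q(1/\sigma) = (2(\sigma+\tau)-1)/\sigma < 0$ and $q(1/\tau) = (2(\sigma+\tau)-1)/\tau < 0$ since $\sigma+\tau<1/2$, while $q(0)=1>0$, so the smaller root $p$ lies in $]0,\min\{1/\sigma,1/\tau\}[$, whence $M = 1+\sigma p \in\ ]1,2[$ and $N = \tau p \in\ ]0,1[$.

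Two small cleanups. First, you need not entertain $\sigma\tau=0$: the hypothesis is that $\sigma,\tau>0$. Second, the interim chain $(1-\sigma-\tau)^2 - 4\sigma\tau > (1-\sigma-\tau)^2 - (\sigma+\tau) > 0$ fails near $\sigma+\tau=1/2$ (at $\sigma+\tau=0.49$ the rightmost quantity is $0.51^2-0.49<0$); the sharper estimate $4\sigma\tau\le(\sigma+\tau)^2$ gives discriminant $\ge 1-2(\sigma+\tau)>0$, but in fact you can skip the discriminant entirely, since $q(0)>0$ together with $q(1/\sigma)<0$ already forces a real root in $]0,1/\sigma[$, and positivity of the leading coefficient guarantees it is the smaller of the two. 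The wandering middle paragraph --- the abandoned direct attempt to bound $p<2$ --- should simply be cut; the endpoint check supersedes it.
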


To prove Theorem~\ref{thm:global} we will use Banach Fixed Point Theorem to find a convenient $\phi\in\cL^{(B)}_N$.

\begin{lemma}\label{lemma:bochner-measurability}
   Let $(h,\phi) \in \cX^{(B)}_{M,N}$. Then the maps
   \begin{equation}\label{eq:(t,s,w)|->Phi_P_f...:Bochner:meas}
      \begin{split}
         & (t,s,\w) \mapsto \Phi^{t-s}_{\theta^s \w} P_{\theta^s \w}
            f_{\theta^s \w}(h_{s,\w}(P_\w x),\phi_{\theta^s \w}(h_{s,\w}(P_\w x)))\\
         & (s,\w) \mapsto \Phi^{-s}_{\theta^s \w} Q_{\theta^s \w}
            f_{\theta^s \w}(h_{s,\w}(P_\w x),\phi_{\theta^s \w}(h_{s,\w}(P_\w x)))
      \end{split}
   \end{equation}
   are Bochner measurable for every $x \in X$ and the maps
   \begin{equation}\label{eq:s|->Phi_P_f...:Bochner:int}
      \begin{split}
         & s \mapsto \Phi^{t-s}_{\theta^s \w} P_{\theta^s \w}
            f_{\theta^s \w}(h_{s,\w}(P_\w x),\phi_{\theta^s \w}(h_{s,\w}(P_\w x)))\\
         & s \mapsto \Phi^{-s}_{\theta^s \w} Q_{\theta^s \w}
            f_{\theta^s \w}(h_{s,\w}(P_\w x),\phi_{\theta^s \w}(h_{s,\w}(P_\w x)))
      \end{split}
   \end{equation}
   are Bochner integrable in $[0,t]$ for every $(t,\w,x) \in \T^+ \x \W \x X$.
\end{lemma}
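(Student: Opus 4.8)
The plan is to reduce everything to the two basic closure properties established earlier: that the composition of a measurable map with a Bochner measurable map is Bochner measurable (Corollary~\ref{cor:Bmeas_o_meas_is_Bmeas}), and that a Bochner measurable map is Bochner integrable iff its norm is Lebesgue integrable (Proposition~\ref{prop:Bochner:2}). First I would fix $(h,\phi)\in\cX^{(B)}_{M,N}$ and $x\in X$, and unwind the composite $s\mapsto f_{\theta^s\w}(h_{s,\w}(P_\w x),\phi_{\theta^s\w}(h_{s,\w}(P_\w x)))$ step by step. The inner map $(t,s,\w)\mapsto h_{s,\w}(P_\w x)$ is Bochner measurable by~\eqref{eq:(t,w)|->h_t,w(xi):Bochner} (it does not depend on $t$, and precomposition with the projection onto the relevant coordinates is $(\cB(\T^+)\otimes\cB(\T^+)\otimes\Sgm,\cB(\T^+)\otimes\Sgm)$-measurable, so Corollary~\ref{cor:Bmeas_o_meas_is_Bmeas} applies). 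Then $(s,\w)\mapsto\phi_{\theta^s\w}(h_{s,\w}(P_\w x))$ is Bochner measurable: since $h_{s,\w}(P_\w x)\in E_{\theta^s\w}$ by~\eqref{h_n,w(xi):in:E(t^n(w))}, we have $\phi_{\theta^s\w}(h_{s,\w}(P_\w x))=\phi_{\theta^s\w}(P_{\theta^s\w}h_{s,\w}(P_\w x))$, and $\w'\mapsto\phi_{\w'}(P_{\w'}y)$ is Bochner measurable for each $y$ by~\eqref{eq:w|->phi_w(xi):Bochner}; the joint Bochner measurability in $(s,\w)$ then follows by approximating $\phi$ with simple functions and using the Lipschitz bound~\eqref{eq:||phi_w(xi)-pi_w(bxi)||<=...} together with the Bochner measurability of $(s,\w)\mapsto h_{s,\w}(P_\w x)$ to pass to a limit. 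Applying $f_{\theta^s\w}$ and using~\eqref{eq:w|->f_w(x):Bochner} together with the Lipschitz bound~\eqref{ine:||f_w(x)-f_w(y)||<=Lip(f_w)||x-y||} in the same approximation scheme gives Bochner measurability of the full composite in $(s,\w)$, hence (being independent of $t$) also in $(t,s,\w)$.

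Next I would bring in the linear cocycle factors. The first map in~\eqref{eq:(t,s,w)|->Phi_P_f...:Bochner:meas} is, pointwise, the value at $t-s$ of the RDS $\Phi$ applied to $P_{\theta^s\w}$ times the composite vector above; more precisely one writes it as $\Phi^{t-s}_{\theta^s\w}\bigl(P_{\theta^s\w}v(s,\w)\bigr)$ where $v$ is the Bochner measurable composite just constructed. Here the abstract two-variable-in-$X$ structure of $f$ forces a little care — formally $f$ takes values and arguments in $X=E\oplus F$ — but the cocycle $\Phi$ is linear and jointly Bochner measurable in its first two arguments by hypothesis $i')$, and $(t,s,\w)\mapsto(t-s,\theta^s\w)$ is $(\cB(\T^+)\otimes\cB(\T^+)\otimes\Sgm)$-measurable on the relevant domain, so Corollary~\ref{cor:Bmeas_o_meas_is_Bmeas} again yields Bochner measurability of the composition, this time combined with Bochner measurability of $v$ through a further approximation/limit argument. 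For the second map in~\eqref{eq:(t,s,w)|->Phi_P_f...:Bochner:meas} I would use condition $vi)$ of the definition of a $P$-invariant splitting, which asserts exactly that $(t,\w)\mapsto(\Phi^t_\w|_{\ker P_\w})^{-1}Q_{\theta^t\w}y=\phii{0}{t}Q_{\theta^t\w}y$ is Bochner measurable for each $y$; composing with $(s,\w)\mapsto(s,\w)$ and with the Bochner measurable $v$ gives the claim. Fixing $t$ and $\w$ and restricting the $s$-variable to $[0,t]$, measurability of the resulting maps of $s$ in~\eqref{eq:s|->Phi_P_f...:Bochner:int} is immediate.

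Finally, for Bochner integrability on $[0,t]$ I would invoke Proposition~\ref{prop:Bochner:2}: it suffices to bound the norms by Lebesgue-integrable functions of $s$. For the first map, using~\ref{eq:split4}, the Lipschitz estimates~\eqref{ine:||h_n,w(xi)||<=M...} and~\eqref{ine:||phi_w(xi)||<=N||xi||}, and~\eqref{ine:||f_w(x)||<=Lip(f_w)||x||}, the norm is $\le\alpha^+_{t-s,\theta^s\w}\,\Lip(f_{\theta^s\w})\,M(1+N)\|P_\w x\|\,\alpha^+_{s,\w}$ up to constants, and this is Lebesgue measurable and integrable on $[0,t]$ precisely by the defining condition~\eqref{eq:s|->a+_t-s,ths_w...:Lebesgue} of $\sF^{(B)}_\alpha$ (the first map there). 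For the second, using~\ref{eq:split5} in place of~\ref{eq:split4}, the norm is $\le\alpha^-_{s,\theta^s\w}\,\Lip(f_{\theta^s\w})\,M(1+N)\|P_\w x\|\,\alpha^+_{s,\w}$ up to constants, Lebesgue integrable on $[0,t]$ by the second map in~\eqref{eq:s|->a+_t-s,ths_w...:Lebesgue} (indeed even on $[0,+\infty[$ by finiteness of $\tau$). I expect the only genuinely delicate point to be the bookkeeping in the joint-in-$(s,\w)$ Bochner measurability of the nested composites: each inner layer is Bochner measurable only as a function of a single grouped variable, and promoting this to joint Bochner measurability requires the simple-function approximation combined with the Lipschitz continuity of $f$, $\phi$, and $h$ — a standard but slightly fussy argument; everything downstream (the linear factors, the norm bounds, the appeal to Propositions~\ref{prop:Borel_x_Bochner} and~\ref{prop:Bochner:2}) is routine.
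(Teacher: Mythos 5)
Your proposal follows essentially the same route as the paper's proof: the composite is peeled apart layer by layer, each layer's Bochner measurability is established via a Carath\'eodory-type superposition argument combining pointwise Bochner measurability with Lipschitz (or linear) dependence, and the integrability claims follow from Proposition~\ref{prop:Bochner:2} and the norm bounds furnished by~\ref{eq:split4},~\ref{eq:split5} and the Lipschitz estimates. The paper packages the ``fussy'' superposition steps by citing ready-made external lemmas --- a Carath\'eodory-composition lemma from Aulbach--Wanner for substituting the Bochner measurable map $(s,\w)\mapsto h_{s,\w}(P_\w x)$ into the Lipschitz maps $\phi$ and $f$, and product-measurability results from Hyt\"onen et al.\ for the operator factors $\Phi^{t-s}_{\theta^s\w}P_{\theta^s\w}$ and $\phiw{s}Q_{\theta^s\w}$ --- whereas you describe the underlying simple-function-approximation argument directly; these are the same idea. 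One slip in phrasing: you write ``approximating $\phi$ with simple functions,'' but the Carath\'eodory argument you need approximates the \emph{inner} map $(s,\w)\mapsto h_{s,\w}(P_\w x)$ by simple functions and then uses the Lipschitz continuity of $z\mapsto\phi_{\theta^s\w}(P_{\theta^s\w}z)$ together with the Bochner measurability of each $\w'\mapsto\phi_{\w'}(P_{\w'}y_i)$ to pass to the limit; the rest of your sentence shows you intend exactly this, but the wording inverts the roles. A small point in your favor: you explicitly invoke condition~$vi)$ of the $P$-invariant splitting to justify Bochner measurability of $(s,\w)\mapsto\phiw{s}Q_{\theta^s\w}y$, a detail the paper leaves implicit when treating the second map, and you explicitly note that the Lebesgue-measurability requirement in the definition of $\sF^{(B)}_\alpha$ is what makes the dominating integrals well-defined.
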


\begin{proof}
   Let $(h,\phi) \in \cX^{(B)}_{M,N}$. First we prove that the maps~\eqref{eq:(t,s,w)|->Phi_P_f...:Bochner:meas} are Bochner measurable. Since $\theta$ is $(\cB(\T^+) \otimes \Sgm,\Sgm)$-measurable, from~\eqref{eq:w|->phi_w(xi):Bochner}, \eqref{eq:w|->f_w(x):Bochner} and Corollary~\ref{cor:Bmeas_o_meas_is_Bmeas}, it follows that
   \begin{align*}
      & (s,\w) \mapsto \phi_{\theta^s \w}(P_\w x)
         \text{ is Bochner measurable for every } x \in X;\\
      & (s,\w) \mapsto f_{\theta^s \w} (x)
         \text{ is Bochner measurable for every } x \in X.
   \end{align*}
   Analogous, since $\Phi$ is a Bochner measurable linear RDS,
   \begin{equation*}
      (t,s,\w) \mapsto \Phi^{t-s}_{\theta^s\w} x \text{ is Bochner measurable for every } x \in X
   \end{equation*}
   and by~\cite[Corollary 1.1.29]{Hytonen_Neerven_Veraar_Weis-book-2016} we also have
   \begin{equation*}
      (t,s,\w) \mapsto \Phi^{t-s}_{\theta^s\w} P_{\theta^s \w} x \text{ is Bochner measurable for every } x \in X.
   \end{equation*}
   Hence, from~\eqref{eq:(t,w)|->h_t,w(xi):Bochner} and~\cite[Lemma 2.2]{Aulbach_Wanner-book-1996} it follows that
   \begin{equation*}
      (s,\w) \mapsto \phi_{\theta^s \w}(h_{s,\w}(P_\w x))
      \ \ \ \text{ and } \ \ \
      (s,\w) \mapsto
         f_{\theta^s \w}(h_{s,\w}(P_\w x),\phi_{\theta^s \w}(h_{s,\w}(P_\w x)))
   \end{equation*}
   are Bochner measurable for all $x \in X$. By~\cite[Proposition 1.1.28]{Hytonen_Neerven_Veraar_Weis-book-2016} the maps~\eqref{eq:(t,s,w)|->Phi_P_f...:Bochner:meas} are Bochner measurable for every $x \in X$.

   From~\eqref{ine:||f_w(x)||<=Lip(f_w)||x||},~\eqref{ine:||phi_w(xi)||<=N||xi||} and~\eqref{ine:||h_n,w(xi)||<=M...} it follows that
   \begin{align*}
      & \norm{f_{\theta^s \w}(h_{s,\w}(P_\w x), \phi_{\theta^s \w}(h_{s,\w}(P_\w x)))}\\
      & \le \Lip(f_{\theta^s \w}) \prts{\norm{h_{s,\w}(P_\w x)}
         + \norm{\phi_{\theta^s \w}(h_{s,\w}(P_\w x))}}\\
      & \le \Lip(f_{\theta^s \w}) \prts{\norm{h_{s,\w}(P_\w x)}
         + N \norm{(h_{s,\w}(P_\w x))}}\\
      & \le M(1+N) \Lip(f_{\theta^s \w}) \alpha^+_{s,\w} \|P_\w x\|.
   \end{align*}
   By~\ref{eq:split4} we have
   \begin{align*}
      & \norm{\Phi^{t-s}_{\theta^s \w} P_{\theta^s \w}
         f_{\theta^s \w}(h_{s,\w}(P_\w x),\phi_{\theta^s \w}(h_{s,\w}(P_\w x)))}\\
      & \le M(1+N)
         \alpha^+_{t-s,\theta^s \w} \Lip(f_{\theta^s \w}) \alpha^+_{s,\w} \|P_\w x\|
   \end{align*}
   and by~\ref{eq:split5} we obtain
   \begin{equation}\label{ine:||Phi^-s...f...||<=M(1+N)...}
      \begin{split}
         & \norm{\Phi^{-s}_{\theta^s \w} Q_{\theta^s \w}
            f_{\theta^s \w}(h_{s,\w}(P_\w x),\phi_{\theta^s \w}(h_{s,\w}(P_\w x)))}\\
      & \le M(1+N)
         \alpha^-_{s,\theta^s\w} \Lip(f_{\theta^s \w}) \alpha^+_{s,\w} \|P_\w x\|.
      \end{split}
   \end{equation}
   Thus, taking into account the definitions of $\sigma$ and $\tau$, we have
   \begin{equation*}
      \dint_0^t \alpha^+_{t-s,\theta^s \w} \Lip(f_{\theta^s \w}) \alpha^+_{s,\w} \ds
      \le \sigma \alpha^+_{t,\w}
      < +\infty
   \end{equation*}
   and
   \begin{equation*}
      \dint_0^t \alpha^-_{s,\theta^s\w} \Lip(f_{\theta^s \w}) \alpha^+_{s,\w}
      \le \dint_0^{+\infty}
         \alpha^-_{s,\theta^s\w} \Lip(f_{\theta^s \w}) \alpha^+_{s,\w}
      \le \tau
      < +\infty,
   \end{equation*}
   which by Proposition~\ref{prop:Bochner:2} imply that the maps~\eqref{eq:s|->Phi_P_f...:Bochner:int} are Bochner integrable in $[0,t]$ for every $(t,\w,x) \in \T^+ \x \W \x X$.
\end{proof}

Given $\w\in \W$ and $v_\w=(\xi_\w,\eta_\w)\in E_\w \times F_\w$,
it follows from~\eqref{eq:Psi:cont} that the trajectory
$v_{\theta^t\w} = \Psi^t_\w v_\w$, with $v_{\theta^t\w}=\prts{x_{\theta^t\w},y_{\theta^t\w}}\in E_{\theta^t\w}\x F_{\theta^t\w}$ satisfies the following equations
\begin{align}
   x_{\theta^t\w} & = \Phi_{\w}^t \xi_\w + \int_0^t \Phi_{\theta^s\w}^{t-s} P_{\theta^s\w} f_{\theta^s\w}(x_{\theta^s\w},y_{\theta^s\w})\ds,
      \label{eq:dyn-split1a}\\
   y_{\theta^t\w}  &= \Phi_{\w}^t \eta_\w +\int_0^t \Phi_{\theta^s\w}^{t-s} Q_{\theta^{s}\w} f_{\theta^s\w}(x_{\theta^s\w},y_{\theta^s\w})\ds
      \label{eq:dyn-split1b}
\end{align}
for each $t \in\R_0^+$. In view of the forward invariance required in \eqref{thm:global:invar}, each
trajectory given by~\eqref{eq:Psi:cont} starting in $\cV_{\phi,\w}$ must be in
$\cV_{\phi,\theta^t\w}$ for every $t \in \R_0^+$, and thus equations~\eqref{eq:dyn-split1a} and~\eqref{eq:dyn-split1b} can be written in
the form
\begin{align}
   x_{\theta^t\w} & = \Phi_{\w}^t \xi_\w + \int_0^t \Phi_{\theta^s\w}^{t-s} P_{\theta^s\w} f_{\theta^s\w}(x_{\theta^s\w},\phi_{\theta^s\w}(x_{\theta^s\w}))\ds,
      \label{eq:dyn-split2a}\\
   \phi_{\theta^t\w}(x_{\theta^t\w})  &= \Phi_{\w}^t  \phi_{\w}(\xi_\w) +\int_0^t \Phi_{\theta^s\w}^{t-s} Q_{\theta^{s}\w} f_{\theta^s\w}(x_{\theta^s\w},\phi_{\theta^s\w}(x_{\theta^s\w}))\ds.
      \label{eq:dyn-split2b}
\end{align}

In the following we rewrite conditions~\eqref{eq:dyn-split2a}
and~\eqref{eq:dyn-split2b}.
\begin{lemma} \label{lemma:equiv}
   Consider $(h,\phi) \in \cX^{(B)}_{M,N}$ such that
   \begin{equation} \label{eq:dyn-split3a}
      h_{t,\w}(\xi)
      = \Phi_{\w}^t \xi + \int_0^t \Phi_{\theta^s\w}^{t-s} P_{\theta^s\w}
         f_{\theta^s\w}(h_{s,\w}(\xi),\phi_{\theta^s\w}(h_{s,\w}(\xi)))\ds
   \end{equation}
   for all $(t,\w,\xi)\in \cH$. Then the following properties are equivalent:
   \begin{enumerate}[\lb=$\alph*)$,\lm=5mm]
      \item for every $(t,\w,\xi)\in \cH$
        \begin{align}
            \phi_{\theta^t\w}(h_{t,\w}(\xi))  &= \Phi_{\w}^t  \phi_{\w}(\xi) +\int_0^t \Phi_{\theta^s\w}^{t-s} Q_{\theta^s\w} f_{\theta^s\w}(h_{s,\w}(\xi),\phi_{\theta^s\w}(h_{s,\w}(\xi)))\ds;
            \label{eq:dyn-split3b}
        \end{align}
      \item for every $(\w,\xi) \in \cG$
        \begin{equation} \label{eq:phi_n}
            \phi_{\w}(\xi)
               = -\int_0^{+\infty} \phiw{s} Q_{\theta^s\w} f_{\theta^s\w}(h_{s,\w}(\xi),\phi_{\theta^s\w}(h_{s,\w}(\xi)))\ds.
        \end{equation}
   \end{enumerate}
\end{lemma}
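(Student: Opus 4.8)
The plan is to show both implications by manipulating the cocycle/splitting identities and passing to (or coming back from) the improper integral at $+\infty$. First I would prove that $a)$ implies $b)$. Fix $(\w,\xi)\in\cG$. Apply the operator $\phiw{t}$ to both sides of~\eqref{eq:dyn-split3b}: since $\phi_{\theta^t\w}(h_{t,\w}(\xi))\in F_{\theta^t\w}$, we have $\phiw{t}\phi_{\theta^t\w}(h_{t,\w}(\xi))\in F_\w$, and using property $iv)$ of the $P$-invariant splitting together with $\Phi^t_\w$ commuting with the projections (property $iii)$), one checks $\phiw{t}\Phi^t_\w\phi_\w(\xi)=\phi_\w(\xi)$ and $\phiw{t}\Phi^{t-s}_{\theta^s\w}Q_{\theta^s\w}=\phii{s}{s}Q_{\theta^s\w}$ for $0\le s\le t$ (this last identity follows from $\Phi^t_\w=\Phi^{t-s}_{\theta^s\w}\circ\Phi^s_\w$ restricted to kernels and taking inverses). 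Hence
\begin{equation*}
   \phi_\w(\xi)
   = \phiw{t}\phi_{\theta^t\w}(h_{t,\w}(\xi))
      + \int_0^t \phii{s}{s} Q_{\theta^s\w} f_{\theta^s\w}(h_{s,\w}(\xi),\phi_{\theta^s\w}(h_{s,\w}(\xi)))\ds.
\end{equation*}
Now let $t\to+\infty$. The integral term converges by Lemma~\ref{lemma:bochner-measurability} (its integrand is dominated by $M(1+N)\alpha^-_{s,\theta^s\w}\Lip(f_{\theta^s\w})\alpha^+_{s,\w}\|\xi\|$, which is integrable on $[0,+\infty[$ with total mass at most $M(1+N)\tau\|\xi\|$) and yields the right-hand side of~\eqref{eq:phi_n} up to sign. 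It remains to see that $\phiw{t}\phi_{\theta^t\w}(h_{t,\w}(\xi))\to 0$: using~\eqref{ine:||phi_w(xi)||<=N||xi||}, \eqref{ine:||h_n,w(xi)||<=M...} and~\ref{eq:split5}, we bound its norm by $N\cdot\alpha^-_{t,\theta^t\w}\cdot M\|\xi\|\alpha^+_{t,\w} = MN\|\xi\|\,\alpha^+_{t,\w}\alpha^-_{t,\theta^t\w}$, which tends to $0$ by hypothesis~\eqref{eq:CondicaoTeo}. This gives $b)$.

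For the converse, assume $b)$ and fix $(t,\w,\xi)\in\cH$. Starting from~\eqref{eq:phi_n} applied at the point $(\theta^t\w, h_{t,\w}(\xi))\in\cG$ (legitimate since $h_{t,\w}(\xi)\in E_{\theta^t\w}$ by~\eqref{h_n,w(xi):in:E(t^n(w))}), one writes
\begin{equation*}
   \phi_{\theta^t\w}(h_{t,\w}(\xi))
   = -\int_0^{+\infty} \phii{t+s}{s} Q_{\theta^{t+s}\w} f_{\theta^{t+s}\w}(h_{s,\theta^t\w}(h_{t,\w}(\xi)),\phi_{\theta^{t+s}\w}(h_{s,\theta^t\w}(h_{t,\w}(\xi))))\ds.
\end{equation*}
The cocycle property of $h$ — which is exactly~\eqref{eq:dyn-split3a} iterated, i.e.\ $h_{s,\theta^t\w}(h_{t,\w}(\xi))=h_{t+s,\w}(\xi)$ for all $s\ge 0$ — lets us replace the inner argument by $h_{t+s,\w}(\xi)$. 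Then the substitution $r=t+s$ turns the integral into $-\int_t^{+\infty}\phiw{r}Q_{\theta^r\w}f_{\theta^r\w}(h_{r,\w}(\xi),\phi_{\theta^r\w}(h_{r,\w}(\xi)))\dr$. Applying $\Phi^t_\w$ (restricted to kernels, this is the inverse of $\phiw{t}$) to both sides and using $\Phi^t_\w\phiw{r}Q_{\theta^r\w}=\Phi^{t-r}_{\theta^r\w}Q_{\theta^r\w}$ for $r\ge t\ge 0$ (again from the cocycle identity on kernels), we get
\begin{equation*}
   \Phi^t_\w\phi_{\theta^t\w}(h_{t,\w}(\xi))
   = -\int_t^{+\infty}\Phi^{t-r}_{\theta^r\w}Q_{\theta^r\w}f_{\theta^r\w}(h_{r,\w}(\xi),\phi_{\theta^r\w}(h_{r,\w}(\xi)))\dr.
\end{equation*}
Wait — $\Phi^t_\w$ is not invertible on $F_\w$ in general, only $\Phi^t_\w|_{\ker P_\w}$ is, with inverse $\phiw{t}$; since $\phi_{\theta^t\w}(h_{t,\w}(\xi))\in F_{\theta^t\w}=\ker P_{\theta^t\w}$, the correct move is to note $\phi_{\theta^t\w}(h_{t,\w}(\xi)) = \Phi^t_\w\bigl(\phiw{t}\phi_{\theta^t\w}(h_{t,\w}(\xi))\bigr)$ and that $\phiw{t}\phi_{\theta^t\w}(h_{t,\w}(\xi)) = -\int_t^{+\infty}\phiw{r}Q_{\theta^r\w}f_{\theta^r\w}(\cdots)\dr$ by the identity above relating $\phii{t+s}{s}$ and $\phiw{r}$. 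Applying $\Phi^t_\w$ and splitting $\int_0^{+\infty}=\int_0^t+\int_t^{+\infty}$ in~\eqref{eq:phi_n} for $\phi_\w(\xi)$ itself, after applying $\Phi^t_\w$ and using the kernel-cocycle identity, reconstitutes precisely~\eqref{eq:dyn-split3b}.

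The main obstacle — and the step deserving the most care — is the algebra of the partial inverses $\phiw{t}$ on the kernels: one must repeatedly justify identities of the form $\phiw{t}\circ\Phi^{t-s}_{\theta^s\w}|_{\ker}=\phii{s}{s}|_{\ker}$ and $\Phi^t_\w\circ\phiw{r}|_{\ker}=\Phi^{t-r}_{\theta^r\w}|_{\ker}$ (for $r\ge t$), each of which reduces to the cocycle property $\Phi^{t+s}_\w=\Phi^t_{\theta^s\w}\circ\Phi^s_\w$ restricted to kernels and the invertibility assumption $v)$ in the definition of $P$-invariant splitting, but the bookkeeping of base points $\theta^s\w$ versus $\w$ is delicate. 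The analytic inputs — convergence of the improper integrals and the vanishing $\alpha^+_{t,\w}\alpha^-_{t,\theta^t\w}\to 0$ — are routine given Lemma~\ref{lemma:bochner-measurability} and hypothesis~\eqref{eq:CondicaoTeo}.
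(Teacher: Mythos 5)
Your argument mirrors the paper's: for $a)\Rightarrow b)$ you apply $\phiw{t}$ to \eqref{eq:dyn-split3b}, collapse the integrand via the kernel-cocycle identity $\phiw{t}\Phi^{t-s}_{\theta^s\w}Q_{\theta^s\w}=\phiw{s}Q_{\theta^s\w}$, and pass to $t\to+\infty$ using the bound $MN\|\xi\|\alpha^+_{t,\w}\alpha^-_{t,\theta^t\w}\to 0$; for $b)\Rightarrow a)$ you apply $\Phi^t_\w$ to \eqref{eq:phi_n}, split $\int_0^{+\infty}=\int_0^t+\int_t^{+\infty}$, change variables and identify the tail with $\phi_{\theta^t\w}(h_{t,\w}(\xi))$ via the flow property $h_{s,\theta^t\w}(h_{t,\w}(\xi))=h_{t+s,\w}(\xi)$, which the paper justifies by the assumed uniqueness of solutions of \eqref{eq:u(t)=...}. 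Two small points: the equation you obtain after applying $\phiw{t}$ should carry a minus sign in front of the integral (since the integral sits on the $\phi_{\theta^t\w}(h_{t,\w}(\xi))$ side of \eqref{eq:dyn-split3b} before you move it across) — you flag this yourself with ``up to sign'' — and note that $F_\w$ \emph{is} $\ker P_\w$, so the invertibility of $\Phi^t_\w$ on $F_\w$ is exactly property $v)$ of the splitting and there is nothing to repair in the ``Wait'' passage; with those corrections your proof is the paper's proof.
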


\begin{proof}
   First we prove that the integral in equation~\eqref{eq:phi_n} is convergent.
   Indeed, from~\eqref{ine:||Phi^-s...f...||<=M(1+N)...} and~\eqref{def:tau}, we conclude that for every $(\w,\xi) \in \cG$
   \begin{align*}
      & \int_0^{+\infty} \|\phiw{s} Q_{\theta^s\w}
        f_{\theta^s\w}(h_{s,\w}(\xi),\phi_{\theta^s\w}(h_{s,\w}(\xi)))\|\ds\\
      & \le M(1+N) \|\xi\|
         \int_0^{+\infty} \alpha^-_{s,\theta^s\w} \Lip(f_{\theta^s \w})\alpha^+_{s,\w}\ds\\
      & \le M(1+N)\tau \|\xi\|.
   \end{align*}

   Now, let us suppose that~\eqref{eq:dyn-split3b} holds for every $(t,\w,\xi) \in \cH$. Then, since for $t > s$ we have
      $$ \phiw{t}\prts{\Phi_{\theta^s\w}^{t-s}\vert_{F_{\theta^s\w}}}
      =\phiw{s},$$
   equation~\eqref{eq:dyn-split3b} can be written in the following equivalent form (apply $\Phi_{\theta^t \w}^{-t}$ to both sides)
   \begin{equation} \label{eq:equiv1A}
    \phi_{\w}(\xi)= \phiw{t}\phi_{\theta^t\w}(h_{t,\w}(\xi))  -\int_0^t \phiw{s} Q_{\theta^s\w} f_{\theta^s\w}(h_{s,\w}(\xi),\phi_{\theta^s\w}(h_{s,\w}(\xi)))\ds.
   \end{equation}\normalsize
   Using~\ref{eq:split5},~\eqref{ine:||phi_w(xi)||<=N||xi||} and~\eqref{ine:||h_n,w(xi)||<=M...},
   we have
   \begin{align*}
      \|\phiw{t}\phi_{\theta^t\w}(h_{t,\w}(\xi))\|
      & = \|\phiw{t}Q_{\theta^t\w}\phi_{\theta^t\w}(h_{t,\w}(\xi))\| \\
      & \le N\|h_{t,\w}(\xi)\|\alpha^-_{t,\theta^t\w}\\
      & \le MN\|\xi\|\alpha^+_{t,\w}\alpha^-_{t,\theta^t\w}
   \end{align*}
   and by~\eqref{eq:CondicaoTeo} this converge to zero when $t\to +\infty$.
   Hence, letting $t\to +\infty$ in \eqref{eq:equiv1A} we obtain the
   identity~\eqref{eq:phi_n} for every $(\w,\xi) \in \cG$.

   Assume now that~\eqref{eq:phi_n} holds for every $(\w,\xi) \in \cG$. Applying $\Phi_\w^t$ to both sides, we have for all $t \in\R_0^+$
   \begin{align*}
      \Phi_\w^t\phi_\w(\xi)
               &= -\int_0^{+\infty} \Phi_\w^t\phiw{s} Q_{\theta^s\w} f_{\theta^s\w}(h_{s,\w}(\xi),\phi_{\theta^s\w}(h_{s,\w}(\xi)))\ds\\
               &=-\int_0^t \Phi_{\theta^s\w}^{t-s} Q_{\theta^s\w} f_{\theta^s\w}(h_{s,\w}(\xi),\phi_{\theta^s\w}(h_{s,\w}(\xi)))\ds\\
                  &\,\,\,\, -\int_{t}^{+\infty} \phii{s}{(s-t)} Q_{\theta^s\w} f_{\theta^s\w}(h_{s,\w}(\xi),\phi_{\theta^s\w}(h_{s,\w}(\xi)))\ds\\
               &=-\int_0^t \Phi_{\theta^s\w}^{t-s} Q_{\theta^s\w} f_{\theta^s\w}(h_{s,\w}(\xi),\phi_{\theta^s\w}(h_{s,\w}(\xi)))\ds\\
                  &\,\,\,\, -\int_0^{+\infty} \phii{t+s}{s} Q_{\theta^{t+s}\w} f_{\theta^{t+s}\w}(h_{t+s,\w}(\xi),\phi_{\theta^{t+s}\w}(h_{t+s,\w}(\xi)))\ds,
      \end{align*}
   and thus~\eqref{eq:dyn-split3b} holds due to the uniqueness of the solution of~\eqref{eq:Psi:cont}, which in particular implies that $h_{t+s,\w}(\xi)$ can be replaced by $h_{s,\theta^t\w}(h_{t,\w}(\xi))$.
\end{proof}

Let $J$ be the operator that assigns to every $(h,\phi)\in\cX^{(B)}_{M,N}$ the function $J(h,\phi)\colon \cH \to X$ defined by
\begin{align*}
   \prtsr{J\prts{h,\phi}}(t,\w,\xi)
   & = \Phi_\w^t \xi + \dint_0^t
      \Phi_{\theta^s\w}^{t-s} P_{\theta^s\w}
      f_{\theta^s\w}(h_{s,\w}(\xi),\phi_{\theta^s\w}(h_{s,\w}(\xi))) \ds.
\end{align*}
Notice that by Lemma~\ref{lemma:bochner-measurability} the operator $J$ is well defined.

\begin{lemma}
 $J\prts{\cX^{(B)}_{M,N}}\subseteq\cJ^{(B)}_M.$
\end{lemma}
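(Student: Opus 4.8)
The plan is to verify that for any $(h,\phi)\in\cX^{(B)}_{M,N}$, the function $g:=J(h,\phi)$ satisfies each of the defining conditions~\eqref{eq:(t,w)|->h_t,w(xi):Bochner},~\eqref{eq:h_n,w(0)=0},~\eqref{eq:h_0,w(xi)=xi},~\eqref{h_n,w(xi):in:E(t^n(w))} and~\eqref{ine:||h_n,w(xi)-h_n,w(bxi)||<=M...} of $\cJ^{(B)}_M$. First I would check the easy algebraic conditions: condition~\eqref{eq:h_0,w(xi)=xi} is immediate since at $t=0$ the integral vanishes and $\Phi_\w^0=\Id_X$, so $g_{0,\w}(\xi)=\xi$; condition~\eqref{eq:h_n,w(0)=0} follows from $f_\w(0)=0$ (by~\eqref{eq:f_w(0)=0}), so that plugging $\xi=0$ gives $h_{s,\w}(0)=0$ by~\eqref{eq:h_n,w(0)=0} for $h$, hence $\phi_{\theta^s\w}(0)=0$ by~\eqref{eq:phi_w(0)=0}, hence the integrand is $0$. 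Condition~\eqref{h_n,w(xi):in:E(t^n(w))} — that $g_{t,\w}(\xi)\in E_{\theta^t\w}$ — follows because $\Phi_\w^t\xi\in E_{\theta^t\w}$ for $\xi\in E_\w$ (by the $P$-invariance property $P_{\theta^t\w}\Phi_\w^t=\Phi_\w^t P_\w$) and because each integrand $\Phi^{t-s}_{\theta^s\w}P_{\theta^s\w}(\cdots)$ lies in $E_{\theta^t\w}=\Phi^{t-s}_{\theta^s\w}(E_{\theta^s\w})$, a closed subspace, so the Bochner integral stays in it.

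Next I would treat the Lipschitz estimate~\eqref{ine:||h_n,w(xi)-h_n,w(bxi)||<=M...}. Writing $g_{t,\w}(\xi)-g_{t,\w}(\bxi)=\Phi_\w^t(\xi-\bxi)+\int_0^t\Phi^{t-s}_{\theta^s\w}P_{\theta^s\w}\bigl[f_{\theta^s\w}(h_{s,\w}(\xi),\phi_{\theta^s\w}(h_{s,\w}(\xi)))-f_{\theta^s\w}(h_{s,\w}(\bxi),\phi_{\theta^s\w}(h_{s,\w}(\bxi)))\bigr]\ds$, I would bound the first term by $\alpha^+_{t,\w}\|\xi-\bxi\|$ using~\ref{eq:split4}. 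For the integrand, using the Lipschitz property of $f$ via~\eqref{ine:||f_w(x)-f_w(y)||<=Lip(f_w)||x-y||} together with the $N$-Lipschitz bound~\eqref{eq:||phi_w(xi)-pi_w(bxi)||<=...} on $\phi$ and the $M\alpha^+_{s,\w}$-Lipschitz bound~\eqref{ine:||h_n,w(xi)-h_n,w(bxi)||<=M...} on $h$, I get the integrand bounded in norm by $M(1+N)\Lip(f_{\theta^s\w})\,\alpha^+_{s,\w}\,\|\xi-\bxi\|$; applying~\ref{eq:split4} to $\Phi^{t-s}_{\theta^s\w}P_{\theta^s\w}$ and then the definition~\eqref{def:sigma} of $\sgm$ gives an integral bound of $M(1+N)\sgm\,\alpha^+_{t,\w}\|\xi-\bxi\|$. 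Thus $\|g_{t,\w}(\xi)-g_{t,\w}(\bxi)\|\le\bigl(1+M(1+N)\sgm\bigr)\alpha^+_{t,\w}\|\xi-\bxi\|$, and by the choice of $M$ in Lemma~\ref{lemma:M&N}, $M(1+N)\sgm=M-1$, so the coefficient is exactly $M$, as required.

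Finally, the measurability condition~\eqref{eq:(t,w)|->h_t,w(xi):Bochner}: I would argue that $(t,\w)\mapsto g_{t,\w}(P_\w x)=\Phi_\w^t(P_\w x)+\int_0^t\Phi^{t-s}_{\theta^s\w}P_{\theta^s\w}f_{\theta^s\w}(h_{s,\w}(P_\w x),\phi_{\theta^s\w}(h_{s,\w}(P_\w x)))\ds$ is Bochner measurable. The first summand is Bochner measurable because $\Phi$ is a Bochner measurable RDS composed with the linear bounded $P_\w$ (invoking the same results from~\cite{Hytonen_Neerven_Veraar_Weis-book-2016} used in Lemma~\ref{lemma:bochner-measurability}). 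For the integral term, Lemma~\ref{lemma:bochner-measurability} already establishes that the integrand $(t,s,\w)\mapsto\Phi^{t-s}_{\theta^s\w}P_{\theta^s\w}f_{\theta^s\w}(\cdots)$ is Bochner measurable and that $s\mapsto(\cdots)$ is Bochner integrable on $[0,t]$; a parametrized-integral (Fubini-type) argument for Bochner integrals then yields joint Bochner measurability of $(t,\w)\mapsto\int_0^t(\cdots)\ds$. I expect this measurability step to be the main obstacle, since it requires handling joint measurability in $(t,\w)$ of a Bochner integral whose domain of integration $[0,t]$ depends on the parameter $t$ and whose integrand is only Bochner measurable rather than continuous — one must be careful with the non-separability of $X$ and appeal to the separable-range structure provided by Proposition~\ref{prop:Borel_x_Bochner}; the remaining verifications are routine given the preceding lemmas.
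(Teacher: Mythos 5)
Your proposal is correct and follows essentially the same route as the paper: the same easy algebraic checks of~\eqref{eq:h_n,w(0)=0},~\eqref{eq:h_0,w(xi)=xi},~\eqref{h_n,w(xi):in:E(t^n(w))}, the same chain of Lipschitz estimates combining~\eqref{ine:||f_w(x)-f_w(y)||<=Lip(f_w)||x-y||},~\eqref{eq:||phi_w(xi)-pi_w(bxi)||<=...},~\eqref{ine:||h_n,w(xi)-h_n,w(bxi)||<=M...} with~\ref{eq:split4} and~\eqref{def:sigma} to reach $\bigl(1+M(1+N)\sgm\bigr)\alpha^+_{t,\w}\|\xi-\bxi\|=M\alpha^+_{t,\w}\|\xi-\bxi\|$ via Lemma~\ref{lemma:M&N}, and the same appeal to Lemma~\ref{lemma:bochner-measurability} for condition~\eqref{eq:(t,w)|->h_t,w(xi):Bochner}. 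Your caution about the parametrized-integral step is warranted: the paper disposes of the Bochner measurability of $(t,\w)\mapsto J(h,\phi)_{t,\w}(P_\w x)$ in one line as a consequence of Lemma~\ref{lemma:bochner-measurability}, leaving implicit exactly the passage you flag from joint measurability of the integrand in $(t,s,\w)$ and integrability in $s$ to joint measurability of the $t$-dependent integral over $[0,t]$.
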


\begin{proof}
   Let $(h,\phi) \in \cX^{(B)}_{M,N}$. Denoting $[J\prts{h,\phi}](t,\w,\xi)$ by $J\prts{h,\phi}_{t,\w}(\xi)$, it follows from Lemma~\ref{lemma:bochner-measurability} that
   \begin{equation*}
      (t,\w) \mapsto J\prts{h,\phi}_{t,w}(P_\w x)
      \text{ is Bochner measurable for every } x \in X.
   \end{equation*}

   By definition, for  every $(h,\phi) \in
   \cX^{(B)}_{M,N}$ we have $J\prts{h,\phi}_{t,\w}(\xi) \in E_{\theta^t\w}$ for every $(t,\w, \xi) \in \cH$ and $J\prts{h,\phi}_{0,\w}(\xi) = \xi$ for every $(\w,\xi) \in \cG$.  Moreover, from~\eqref{eq:h_n,w(0)=0},~\eqref{eq:phi_w(0)=0} and~\eqref{eq:f_w(0)=0}, it follows that $J(h,\phi)_{t,\w}(0)=0$ for every $(t,\w) \in \R_0^+ \x \W$.
   Hence $J(h,\phi)$ satisfies~\eqref{eq:(t,w)|->h_t,w(xi):Bochner},~\eqref{eq:h_n,w(0)=0},
   \eqref{eq:h_0,w(xi)=xi} and~\eqref{h_n,w(xi):in:E(t^n(w))}.

   Let us see that $J\prts{h,\phi}$ satisfies
   \eqref{ine:||h_n,w(xi)-h_n,w(bxi)||<=M...}. Defining
      $$ \gamma_{\theta^s\w}(\xi,\bxi)
         = \| f_{\theta^s\w}(h_{s,\w}(\xi),\phi_{\theta^s\w}(h_{s,\w}(\xi)))
            - f_{\theta^s\w}(h_{s,\w}(\bxi),\phi_{\theta^s\w}(h_{s,\w}(\bxi)))\|,$$
   by~\ref{eq:split4} we have
   \begin{equation}\label{ine:||(J_m,n(x,phi))(xi)-(J_m,n(x,phi))(barxi)||}
      \begin{split}
          & \| J\prts{h,\phi}_{t,\w}(\xi)
            - J\prts{h,\phi}_{t,\w}(\bar\xi)\|\\
         & \le \|\Phi_\w^tP_{\w}\| \|\xi - \bar\xi\|
            + \dint_0^t \|\Phi_{\theta^s\w}^{t-s} P_{\theta^s\w}\| \gamma_{\theta^s\w}(\xi,\bxi)\ds\\
         & \le  \|\xi - \bar\xi\|\alpha^+_{t,\w}
            + \dint_0^t \alpha^+_{t-s,{\theta^s\w}} \gamma_{\theta^s\w}(\xi,\bxi)\ds,
      \end{split}
   \end{equation}

   From~\eqref{ine:||f_w(x)-f_w(y)||<=Lip(f_w)||x-y||},~\eqref{eq:||phi_w(xi)-pi_w(bxi)||<=...} and~\eqref{ine:||h_n,w(xi)-h_n,w(bxi)||<=M...} we have
   \begin{equation}\label{eq:||f_k(x_k,n(xi),...)-f_k(x_k,n(barxi),...)||}
      \begin{split}
         & \gamma_{\theta^s\w}(\xi,\bxi)\\
         & \le \Lip(f_{\theta^s\w}) \prts{\|h_{s,\w}(\xi) - h_{s,\w}(\bar\xi)\|
            + \|\phi_{\theta^s\w}\prts{h_{s,\w}(\xi)} - \phi_{\theta^s\w}\prts{h_{s,\w}(\bar\xi)}\|}\\
         & \le \Lip(f_{\theta^s\w}) \prts{\|h_{s,\w}(\xi) - h_{s,\w}(\bar\xi)\|
            + N \|h_{s,\w}(\xi) - h_{s,\w}(\bar\xi)\|}\\
         & \le \Lip(f_{\theta^s\w}) M(1+N) \|\xi-\bar\xi\| \alpha^+_{s,\w}
      \end{split}
   \end{equation}
   which, together with~\eqref{ine:||(J_m,n(x,phi))(xi)-(J_m,n(x,phi))(barxi)||},~\eqref{def:sigma} and~\eqref{def:M+N}, implies
   \begin{equation*}
      \begin{split}
         & \| J\prts{h,\phi}_{t,\w}(\xi)
            - J\prts{h,\phi}_{t,\w}(\bxi)\|\\
         & \le \|\xi - \bxi\|\alpha^+_{t,\w}
            +  M(1+N) \|\xi - \bxi\| \dint_0^t
            \alpha^+_{t-s,\theta^s\w} \Lip(f_{\theta^s\w}) \alpha^+_{s,\w} \ds\\
         & \le \|\xi - \bar\xi\|\alpha^+_{t,\w} + \sgm  M(1+N) \|\xi-\bar\xi\|\alpha^+_{t,\w}\\
         & = \prts{1 + \sgm M(1+N) }  \|\xi - \bar\xi\|\alpha^+_{t,\w}\\
         & = M  \|\xi - \bar\xi\|\alpha^+_{t,\w}.
      \end{split}
   \end{equation*}
\end{proof}

Let now $L$ be the operator that assigns to every $(h,\phi)\in\cX^{(B)}_{M,N}$ the function $L(h,\phi)\colon \cG \to X$ defined by
\begin{align*}
   \prtsr{L\prts{h,\phi}}(\w,\xi)
   & = -\int_0^{+\infty} \phiw{s} Q_{\theta^s\w} f_{\theta^s\w}(h_{s,\w}(\xi),\phi_{\theta^s\w}(h_{s,\w}(\xi))) \ds,
\end{align*}
that by Lemmas~\ref{lemma:bochner-measurability} and~\ref{lemma:equiv} is well defined.

\begin{lemma}
 $L\prts{\cX^{(B)}_{M,N}}\subseteq\cL^{(B)}_N.$
\end{lemma}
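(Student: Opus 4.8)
The plan is to fix $(h,\phi)\in\cX^{(B)}_{M,N}$ and verify the four conditions defining membership in $\cL^{(B)}_N$, namely~\eqref{eq:w|->phi_w(xi):Bochner},~\eqref{eq:phi_w(0)=0},~\eqref{eq:phi_w(xi):in:F_w} and~\eqref{eq:||phi_w(xi)-pi_w(bxi)||<=...}; convergence of the integral defining $L(h,\phi)$ is already guaranteed by Lemmas~\ref{lemma:bochner-measurability} and~\ref{lemma:equiv}. I would begin with the measurability condition~\eqref{eq:w|->phi_w(xi):Bochner}, which is the only nontrivial point. By Lemma~\ref{lemma:bochner-measurability} the integrand $g(s,\w):=\phiw{s}Q_{\theta^s\w}f_{\theta^s\w}(h_{s,\w}(P_\w x),\phi_{\theta^s\w}(h_{s,\w}(P_\w x)))$ is Bochner measurable jointly in $(s,\w)$ and, for each fixed $\w$, Bochner integrable on $[0,t]$ for every $t$; the uniform majorant obtained in the proof of that lemma from~\eqref{ine:||Phi^-s...f...||<=M(1+N)...} and~\eqref{def:tau} lets one pass to the half-line, so $s\mapsto g(s,\w)$ is Bochner integrable on $[0,+\infty[$. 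The measurability in $\w$ of $\w\mapsto\int_0^{+\infty}g(s,\w)\ds$ then follows from a parameter-dependence (Fubini-type) statement for Bochner integrals, cf.~\cite{Hytonen_Neerven_Veraar_Weis-book-2016}; this is the step where the argument is not purely formal, and where the absence of separability of $X$ must be watched. Hence $L(h,\phi)$ satisfies~\eqref{eq:w|->phi_w(xi):Bochner}.

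The remaining three conditions are elementary. For~\eqref{eq:phi_w(0)=0}, taking $\xi=0$ makes the integrand vanish identically in $s$ by~\eqref{eq:h_n,w(0)=0},~\eqref{eq:phi_w(0)=0} and~\eqref{eq:f_w(0)=0}, so $[L(h,\phi)]_\w(0)=0$. For~\eqref{eq:phi_w(xi):in:F_w}, the range of $Q_{\theta^s\w}$ is $F_{\theta^s\w}=\ker P_{\theta^s\w}$ and $\phiw{s}$ is by definition the inverse of $\Phi_\w^s|_{\ker P_\w}\colon\ker P_\w\to\ker P_{\theta^s\w}$, hence maps $F_{\theta^s\w}$ into $F_\w$; so the integrand is valued in the closed subspace $F_\w$, and therefore so is its Bochner integral, giving $[L(h,\phi)]_\w(\xi)\in F_\w$ for all $(\w,\xi)\in\cG$.

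Finally, for the Lipschitz bound~\eqref{eq:||phi_w(xi)-pi_w(bxi)||<=...} I would repeat the estimate used in the proof that $J(\cX^{(B)}_{M,N})\subseteq\cJ^{(B)}_M$: writing $\gamma_{\theta^s\w}(\xi,\bxi)$ for the same quantity as there, Proposition~\ref{prop:Bochner:2} and~\ref{eq:split5} give
\[
\norm{[L(h,\phi)]_\w(\xi)-[L(h,\phi)]_\w(\bxi)}
\le\int_0^{+\infty}\norm{\phiw{s}Q_{\theta^s\w}}\,\gamma_{\theta^s\w}(\xi,\bxi)\ds
\le\int_0^{+\infty}\alpha^-_{s,\theta^s\w}\,\gamma_{\theta^s\w}(\xi,\bxi)\ds,
\]
and inserting the bound $\gamma_{\theta^s\w}(\xi,\bxi)\le M(1+N)\Lip(f_{\theta^s\w})\alpha^+_{s,\w}\norm{\xi-\bxi}$ from~\eqref{eq:||f_k(x_k,n(xi),...)-f_k(x_k,n(barxi),...)||} together with~\eqref{def:tau} yields $\norm{[L(h,\phi)]_\w(\xi)-[L(h,\phi)]_\w(\bxi)}\le M(1+N)\,\tau\,\norm{\xi-\bxi}$. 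Since $\tau=N/(M(1+N))$ by~\eqref{def:M+N}, the right-hand side equals $N\norm{\xi-\bxi}$, which is exactly~\eqref{eq:||phi_w(xi)-pi_w(bxi)||<=...} (and shows that $N$ is the sharp constant for this construction). Thus $L(h,\phi)\in\cL^{(B)}_N$. The only genuine obstacle is the parameter measurability of the half-line Bochner integral in the first paragraph; everything else is bookkeeping with inequalities already in hand.
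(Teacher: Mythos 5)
Your proof is correct and follows essentially the same route as the paper's: check the four defining conditions of $\cL^{(B)}_N$, with the measurability of $\w\mapsto L(h,\phi)_\w(P_\w x)$ coming from the joint Bochner measurability established in Lemma~\ref{lemma:bochner-measurability} and the Lipschitz constant coming from~\ref{eq:split5}, the estimate~\eqref{eq:||f_k(x_k,n(xi),...)-f_k(x_k,n(barxi),...)||}, \eqref{def:tau} and~\eqref{def:M+N}. You merely make explicit two points the paper leaves implicit --- the Fubini-type passage from joint Bochner measurability of the integrand to Bochner measurability of the parameter integral, and the fact that the integral of an $F_\w$-valued integrand lands in the closed subspace $F_\w$ --- which is a welcome but not structurally different elaboration.
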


\begin{proof}
   As before, we denote  $\prtsr{L\prts{h,\phi}}(\w,\xi)$ by $L\prts{h,\phi}_\w(\xi)$. By~\eqref{eq:(t,s,w)|->Phi_P_f...:Bochner:meas} we can conclude that
   \begin{equation*}
      \w \mapsto L(h,\phi)_\w(P_\w x)
      \text{ is Bochner measurable for every } x \in X.
   \end{equation*}
   Moreover, from~\eqref{eq:h_n,w(0)=0},~\eqref{eq:phi_w(0)=0} and~\eqref{eq:f_w(0)=0} we conclude that $L(h,\phi)_\w(0) = 0$ for every $\w \in \W$ and by definition $L(h,\phi)$ satisfies~\eqref{eq:phi_w(xi):in:F_w}.

   Finally, from~\ref{eq:split5}, ~\eqref{eq:||f_k(x_k,n(xi),...)-f_k(x_k,n(barxi),...)||},~\eqref{def:tau} and~\eqref{def:M+N} it follows for every $\prts{h,\phi}\in\cX^{(B)}_{M,N}$ that
   \begin{equation*}
      \begin{split}
         & \|L\prts{h,\phi}_\w(\xi) - L\prts{h,\phi}_\w(\bar\xi)\|\\
         & \le \dint_0^{+\infty} \|\phiw{s}Q_{\theta^s\w} \|
            \, \gamma_{\theta^s\w}(\xi,\bxi) \ds\\
         & \le \int_0^{+\infty}
            \alpha^-_{s,\theta^s\w} \Lip(f_{\theta^s\w}) M(1+N) \|\xi-\bar\xi\| \alpha^+_{s,\w} \ds\\
         & \le \tau M \prts{1 + N} \|\xi-\bar\xi\|\\
         & = N \|\xi-\bar\xi\|
      \end{split}
   \end{equation*}
   and $L(h,\phi) \in \cL^{(B)}_N$.
\end{proof}

We define now the operator $T \colon \cX^{(B)}_{M,N}  \to \cX^{(B)}_{M,N} $ by
   $$ T(h,\phi) = \prts{J(h,\phi), L(h,\phi)}.$$

\begin{lemma}\label{lemma:contraction}
   The operator $T \colon \cX^{(B)}_{M,N} \to \cX^{(B)}_{M,N}$ is a contraction.
\end{lemma}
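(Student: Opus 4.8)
The plan is to show that $T$ contracts both coordinates simultaneously with respect to the product metric $d\prts{(h,\phi),(g,\psi)} = d_1(h,g) + d_2(\phi,\psi)$, and then (later, outside this lemma) invoke the Banach Fixed Point Theorem on the complete metric space $\cX^{(B)}_{M,N}$. Concretely, I would fix two pairs $(h,\phi),(g,\psi) \in \cX^{(B)}_{M,N}$ and estimate, for each $(t,\w,\xi) \in \cH$, the quantity $\norm{J(h,\phi)_{t,\w}(\xi) - J(g,\psi)_{t,\w}(\xi)}$ divided by $\alpha^+_{t,\w}\norm{\xi}$, and similarly $\norm{L(h,\phi)_\w(\xi) - L(g,\psi)_\w(\xi)}$ divided by $\norm{\xi}$. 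The linear term $\Phi_\w^t\xi$ cancels in the $J$-difference, so everything reduces to controlling the difference of the nonlinear integrands.

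The key step is a pointwise Lipschitz estimate on the integrand. Writing the integrand difference as
\begin{equation*}
   \norm{f_{\theta^s\w}(h_{s,\w}(\xi),\phi_{\theta^s\w}(h_{s,\w}(\xi))) - f_{\theta^s\w}(g_{s,\w}(\xi),\psi_{\theta^s\w}(g_{s,\w}(\xi)))},
\end{equation*}
I would split it via the triangle inequality into a term where only the first argument changes and one where only the second changes, use~\eqref{ine:||f_w(x)-f_w(y)||<=Lip(f_w)||x-y||}, and then bound $\norm{h_{s,\w}(\xi) - g_{s,\w}(\xi)} \le d_1(h,g)\,\alpha^+_{s,\w}\norm{\xi}$ directly from the definition~\eqref{def:d_1(x,y)} of $d_1$. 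For the $\phi$ vs.\ $\psi$ piece I would further split $\phi_{\theta^s\w}(h_{s,\w}(\xi)) - \psi_{\theta^s\w}(g_{s,\w}(\xi))$ into $\phi_{\theta^s\w}(h_{s,\w}(\xi)) - \phi_{\theta^s\w}(g_{s,\w}(\xi))$, bounded by $N\norm{h_{s,\w}(\xi)-g_{s,\w}(\xi)}$ using~\eqref{eq:||phi_w(xi)-pi_w(bxi)||<=...}, plus $\phi_{\theta^s\w}(g_{s,\w}(\xi)) - \psi_{\theta^s\w}(g_{s,\w}(\xi))$, bounded by $d_2(\phi,\psi)\norm{g_{s,\w}(\xi)} \le d_2(\phi,\psi)\,M\alpha^+_{s,\w}\norm{\xi}$ via~\eqref{def:d_2(phi,psi)} and~\eqref{ine:||h_n,w(xi)||<=M...}. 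Collecting terms, the integrand is bounded by $\Lip(f_{\theta^s\w})\,\alpha^+_{s,\w}\norm{\xi}\bigl[(1+N)M\,d_1(h,g) + M\,d_2(\phi,\psi)\bigr]$.

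With this bound in hand, applying~\ref{eq:split4} inside the $J$-integral and~\ref{eq:split5} inside the $L$-integral, then dividing by $\alpha^+_{t,\w}\norm{\xi}$ and taking suprema, gives $d_1(J(h,\phi),J(g,\psi)) \le \sgm\bigl[M(1+N)d_1(h,g)+M\,d_2(\phi,\psi)\bigr]$ using~\eqref{def:sigma}, and $d_2(L(h,\phi),L(g,\psi)) \le \tau\bigl[M(1+N)d_1(h,g)+M\,d_2(\phi,\psi)\bigr]$ using~\eqref{def:tau}. Adding these,
\begin{equation*}
   d\prts{T(h,\phi),T(g,\psi)} \le (\sgm+\tau)M(1+N)\,d_1(h,g) + (\sgm+\tau)M\,d_2(\phi,\psi).
\end{equation*}
By Lemma~\ref{lemma:M&N}, $\sgm+\tau = \dfrac{M-1+N}{M(1+N)} = \dfrac{M(1+N)-1}{M(1+N)}$, so $(\sgm+\tau)M(1+N) = M(1+N)-1 < 1$ since $M<2$ and $N<1$ force $M(1+N)<4$—more carefully, from~\eqref{def:M+N} one checks directly that both coefficients $(\sgm+\tau)M(1+N)$ and $(\sgm+\tau)M$ are strictly less than $1$ (indeed $(\sgm+\tau)M = \dfrac{M-1+N}{1+N} < 1$ iff $M < 2$, which holds). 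Taking the larger of the two coefficients as the contraction constant $\kappa < 1$ yields $d(T(h,\phi),T(g,\psi)) \le \kappa\,d\prts{(h,\phi),(g,\psi)}$. The main obstacle is purely bookkeeping: keeping the nested Lipschitz estimates organized so that exactly the combinations appearing in the definitions of $\sgm$ and $\tau$ emerge, and then verifying via Lemma~\ref{lemma:M&N} that the resulting constants are genuinely below $1$ rather than merely below some larger bound.
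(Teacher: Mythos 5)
Your decomposition of the integrand is exactly the paper's: split the difference via the triangle inequality into a first-argument change, a $\phi$-Lipschitz term, and a $\phi$-vs-$\psi$ term, then read off $d_1$ and $d_2$ from the definitions. But when you "collect terms" you introduce a spurious factor of $M$ in front of $d_1(h,g)$. Your own intermediate bounds give
\begin{equation*}
  \hat\gamma_{\theta^s\w}(\xi) \le \Lip(f_{\theta^s\w})\,\alpha^+_{s,\w}\norm{\xi}\bigl[(1+N)\,d_1(h,g) + M\,d_2(\phi,\psi)\bigr],
\end{equation*}
since $\norm{h-g}$ appears once with coefficient $1$ and once with coefficient $N$, giving $(1+N)$, not $(1+N)M$. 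This is what the paper carries through.

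The extra $M$ then breaks your contraction check, and you seem to sense it without resolving it. From Lemma~\ref{lemma:M&N}, $\sgm+\tau = \frac{M-1+N}{M(1+N)}$, so $(\sgm+\tau)M(1+N) = M-1+N$, not $M(1+N)-1$ as you wrote. The condition $M-1+N<1$ (i.e.\ $M+N<2$) is \emph{not} implied by $M<2$, $N<1$; in fact $\sgm+\tau<1/2$ is consistent with $M+N$ arbitrarily close to $2$ (take $M\to 2$, $N\to 1$). And your "since $M(1+N)<4$" step only bounds $M-1+N$ by $3$. So the proposed contraction constant need not be below $1$. With the correct coefficient $(1+N)$, the candidate constants are $(\sgm+\tau)(1+N) = \frac{M-1+N}{M}<1$ (as $N<1$) and $(\sgm+\tau)M = \frac{M-1+N}{1+N}<1$ (as $M<2$), so $\kappa = (\sgm+\tau)\max\{1+N,M\} < \tfrac12\cdot 2 = 1$, which is the paper's conclusion. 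The fix is local — drop the stray $M$ and the argument goes through identically to the paper's.
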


\begin{proof}
   Let $\prts{h,\phi}, \prts{g,\psi} \in \cX^{(B)}_{M,N}$. Then, setting
   \begin{equation*}
               \hat\gamma_{\theta^s\w}(\xi)= \|f_{\theta^s\w}(h_{s,\w}(\xi),\phi_{\theta^s\w}(h_{s,\w}(\xi)))
            - f_{\theta^s\w}(g_{s,\w}(\xi),\psi_{\theta^s\w}(g_{s,\w}(\xi)))\|,
   \end{equation*}
   by~\eqref{ine:||f_w(x)-f_w(y)||<=Lip(f_w)||x-y||}, \eqref{eq:||phi_w(xi)-pi_w(bxi)||<=...}, \eqref{def:d_1(x,y)},~\eqref{def:d_2(phi,psi)} and~\eqref{ine:||h_n,w(xi)||<=M...} we have
   \begin{equation}\label{ine:hat_gamma<=...}
      \begin{split}
         & \hat\gamma_{\theta^s\w}(\xi)\\
         & \le \Lip(f_{\theta^s\w}) \prts{\|h_{s,\w}(\xi) - g_{s,\w}(\xi)\|
            + \|\phi_{\theta^s\w}(h_{s,\w}(\xi)) - \psi_{\theta^s\w}(g_{s,\w}(\xi))\|}\\
         & \le \Lip(f_{\theta^s\w}) \prts{(1+N)\|h_{s,\w}(\xi) - g_{s,\w}(\xi)\|
            + \|\phi_{\theta^s\w}(g_{s,\w}(\xi)) - \psi_{\theta^s\w}(g_{s,\w}(\xi))\|}\\
         & \le \Lip(f_{\theta^s\w}) \prts{(1+N)d_1(h,g) \ \|\xi\| \alpha^+_{s,\w}
            + M \ d_2(\phi,\psi) \ \|\xi\| \alpha^+_{s,\w}}\\
         & \le \Lip(f_{\theta^s\w})  \|\xi\|\alpha^+_{s,\w}
            \prts{(1+N)d_1(h,g) + M d_2(\phi,\psi) }
      \end{split}
   \end{equation}
   for every $\w \in \W$, $s \in \R_0^+$ and $\xi \in E_\w$. Thus from~\ref{eq:split4}, last inequality and~\eqref{def:sigma}, it follows that
   \begin{align*}
      & \|J(h,\phi)_{t,\w}(\xi) - J(g,\psi)_{t,\w}(\xi)\|\\
      & \le  \dint_0^t \|\Phi_{\theta^s\w}^{t-s} P_{\theta^s\w}\| \hat\gamma_{\theta^s\w}(\xi) \ds\\
      & \le   \dint_0^{t} \alpha^+_{t-s,\theta^s\w}\Lip(f_{\theta^s\w})  \|\xi\|\alpha^+_{s,\w} \prts{(1+N)d_1(h,g)
            + M d_2(\phi,\psi) }\ds\\
      &\le \sgm \|\xi\| \alpha^+_{t,\w}\prts{(1+N)d_1(h,g)
            + M d_2(\phi,\psi) }
   \end{align*}
   and this implies
      $$ d_1\prts{J(h,\phi),J(g,\psi)}
         \le \sgm   \prts{(1+N)d_1(h,g) + M d_2(\phi,\psi)}.$$
   On the other hand, using~\ref{eq:split5},~\eqref{ine:hat_gamma<=...} and~\eqref{def:tau} we have
   \begin{align*}
      & \|L\prts{h,\phi}_\w(\xi) - L\prts{g,\psi}_\w(\xi)\|\\
      &\le \int_0^{+\infty} \|\phiw{s} Q_{\theta^s\w} \|
         \, \hat\gamma_{\theta^s\w}(\xi) \ds\\
       &\le  \int_0^{+\infty} \alpha^-_{s,\theta^s\w}\Lip(f_{\theta^s\w})  \|\xi\|\alpha^+_{s,\w} \prts{(1+N)d_1(h,g)
            + M d_2(\phi,\psi) }\ds\\
      & \le \tau \|\xi\|
         \prts{(1+N)d_1(h,g) + M d_2(\phi,\psi)}
   \end{align*}
   and from this estimates it follows that
      $$ d_2\prts{L(h,\phi),L(g,\psi)}
         \le \tau \prts{(1+N)d_1(h,g) + M d_2(\phi,\psi)}.$$
   Hence
   \begin{equation*}
      \begin{split}
         d\prts{T(h,\phi),T(g,\psi)}
         & \le \prts{\sgm + \tau} \prts{(1+N)d_1(x,g)+ M d_2(\phi,\psi)} \\
         & \le \prts{\sgm + \tau} \maxs{1+N,M} d((h,\phi),(g,\psi))
      \end{split}
   \end{equation*}
   and since $\sgm+\tau < 1/2$, $N<1$ and $M<2$, $T$ is a contraction.
\end{proof}

We are finally in conditions to prove Theorem~\ref{thm:global}. Since $\cX^{(B)}_{M,N}$ is a complete metric space and $T$ is a contraction, by Banach Fixed Point Theorem, $T$ has a unique fixed point $(h,\phi)$. Clearly, this fixed point satisfies conditions~\eqref{eq:dyn-split3a} and~\eqref{eq:phi_n}. By Lemma~\ref{lemma:equiv} $(h,\phi)$ also satisfies condition~\eqref{eq:dyn-split3b}. Therefore, by~\eqref{eq:dyn-split1a} and~\eqref{eq:dyn-split1b}, $\prts{h_{t,\w}(\xi), \phi_{\theta^t\w}(h_{t,\w}(\xi))}$ is the trajectory solution of~\eqref{eq:Psi:cont} satisfying the initial condition $(\xi,\phi_\w(\xi)) \in E_\w \x F_\w$, and the graphs $\cV_\w$ are invariant manifolds of~\eqref{eq:Psi:cont}. Moreover, for each $(t,\w,\xi), (t,\w,\bar{\xi})\in \cH$ it follows from~\eqref{eq:||phi_w(xi)-pi_w(bxi)||<=...},~\eqref{ine:||h_n,w(xi)-h_n,w(bxi)||<=M...} and~\eqref{def:M+N} that
\begin{align*}
   & \|\Psi_{\w}^t(\xi,\phi_\w(\xi)) - \Psi_{\w}^t(\bxi,\phi_\w(\bxi))\|\\
   & = \|\prts{h_{t,\w}(\xi),\phi_{\theta^t\w}(h_{t,\w}(\xi))}
      - \prts{h_{t,\w}(\bxi),\phi_{\theta^t\w}(h_{t,\w}(\bxi))}\|\\
   & \le \|h_{t,\w}(\xi) - h_{t,\w}(\bar\xi)\|
      + \|\phi_{\theta^t\w}(h_{t,\w}(\xi)) - \phi_{\theta^t\w}(h_{t,\w}(\bar\xi))\|\\
   & \le (1+N) \|h_{t,\w}(\xi) - h_{t,\w}(\bar\xi)\|\\
   & \le M(1+N) \alpha^+_{t,\w} \|\xi-\bar \xi\|
\end{align*}
and this proves~\eqref{thm:ineq:norm:F_mn(xi...)-F_mn(barxi...)} with $C=M(1+N)$.

\section{Discrete time} \label{sec:disc_time}

Throughout this section we consider the discrete time case $\T = \Z$ and set $\N_0 = \T^+$.

\subsection{Main theorem (discrete time)}\label{section:main:disc}

Let $\Theta \equiv (\W,\Sgm,\mu,\theta)$ be a measure-preserving dynamical system and let $X$ be a Banach space. Consider a measurable linear RDS $\Phi$ on $X$ over $\Theta$ that admits a generalized dichotomy with bounds $\alpha^+$ and $\alpha^-$ and let $f \in \sF$. We will be interested on the RDS $\Psi\colon \N_0\x\W\x X\to X$ satisfying
\begin{equation}\label{eq:Psi}
   \Psi_\w^n(x)=
   \Phi_{\w}^n x + \dsum_{k=0}^{n-1} \Phi_{\theta^{k+1}\w}^{n-k-1} f_{\theta^k\w}(\Psi_{\w}^k(x))
\end{equation}
for all $n \in \N_0$, $x\in X$ and $\w \in \W$. The RDS $\Psi$ can be regarded as the ``solution'' of the random nonlinear difference equation
\begin{equation*}\label{eq:dyn}
      x_{n+1}=\Phi_{\theta^n\w}^1x_n+f_{\theta^n\w}(x_n).
\end{equation*}

Define
\begin{equation*}
   \sgm
   = \sup\limits_{(n,\w)\in \N \x \W}  \dfrac{1}{\alpha^+_{n,\w}}
      \dsum_{k=0}^{n-1} \alpha^+_{n-k-1,\theta^{k+1}\w} \Lip(f_{\theta^k\w}) \alpha^+_{k,\w}
\end{equation*}
and
\begin{equation*}
   \tau
   = \sup_{\w \in \W} \dsum_{k=0}^{+\infty}
      \alpha^-_{k+1,\theta^{k+1}\w} \Lip(f_{\theta^k\w}) \alpha^+_{k,\w}.
\end{equation*}

\begin{theorem} \label{thm:global:disc}
   Consider a measurable linear RDS $\Phi$ on a Banach space $X$ over a measure pre\-ser\-ving dynamical system $\Theta \equiv (\W,\Sgm,\mu,\theta)$ admitting a generalized dichotomy with bounds $\alpha^+$ and $\alpha^-$ and let $f \in \sF$. If
   \begin{equation}\label{eq:CondicaoTeo:disc}
      \lim_{n \to +\infty} \alpha^+_{n,\w} \alpha^-_{n,\theta^n \w} = 0
      \ \ \ \text{ for all } \w \in \W,
   \end{equation}
   and
   \begin{equation*}
      \sgm +  \tau < \frac12,
   \end{equation*}
   then there exist $N \in \ ]0,1[$ and a unique $\phi \in \cL_N$ such that the solution $\Psi$ of~\eqref{eq:Psi} satisfies
   \begin{equation}\label{eq:thm:global:invariance:discrete}
      \Psi_{\w}^n(\cV_{\phi,\w})
      \subseteq \cV_{\phi,\theta^n\w}\ \ \ \text{ for all } (n,\w) \in \N_0 \x \W.
   \end{equation}
   Furthermore, there is a constant $C \in\ ]0,4[$ depending on $\sgm$ and $\tau$ such that
   \begin{equation*}
      \|\Psi_{\w}^n(\xi,\phi_\w(\xi)) - \Psi_{\w}^n(\bxi,\phi_\w(\bxi))\|
      \le C \alpha^+_{n,\w} \, \|\xi - \bxi\|
   \end{equation*}
   for every $(n, \w, \xi), (n, \w, \bxi) \in \cH$.
\end{theorem}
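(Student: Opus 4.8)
The plan is to repeat the proof of Theorem~\ref{thm:global} \emph{mutatis mutandis}, replacing the Bochner integrals $\dint_0^t$ by finite sums $\dsum_{k=0}^{n-1}$ and $\dint_0^{+\infty}$ by the series $\dsum_{k=0}^{+\infty}$, and working in the plain measurable setting, i.e. inside $\cX_{M,N}=\cJ_M\x\cL_N$ rather than its Bochner analogue. First I would fix, via Lemma~\ref{lemma:M&N}, constants $M\in\,]1,2[$ and $N\in\,]0,1[$ with $\sgm=(M-1)/(M(1+N))$ and $\tau=N/(M(1+N))$. Since $\T^+=\N_0$ is countable, $\prts{\cB(\N_0)\otimes\Sgm,\cB(X)}$-measurability of a map $(n,\w)\mapsto g(n,\w)$ reduces to $\prts{\Sgm,\cB(X)}$-measurability of each $\w\mapsto g(n,\w)$, a property preserved under composition with measurable maps, under finite sums, and under pointwise limits into $X$; hence the discrete analogue of Lemma~\ref{lemma:bochner-measurability} is immediate and the operators below are well defined, no Bochner integrability being required, only the finiteness of $\sgm$ and $\tau$.

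Next I would write down the split dynamics. Applying $P_{\theta^n\w}$ and $Q_{\theta^n\w}$ to~\eqref{eq:Psi} and using $P_{\theta^t\w}\Phi_\w^t=\Phi_\w^tP_\w$, a trajectory $v_{\theta^n\w}=\Psi_\w^nv_\w=\prts{x_{\theta^n\w},y_{\theta^n\w}}\in E_{\theta^n\w}\x F_{\theta^n\w}$ satisfies the discrete Duhamel identity
\[
   x_{\theta^n\w}=\Phi_\w^n\xi_\w+\dsum_{k=0}^{n-1}\Phi_{\theta^{k+1}\w}^{n-k-1}P_{\theta^{k+1}\w}f_{\theta^k\w}(x_{\theta^k\w},y_{\theta^k\w}),
\]
and the analogous one for $y_{\theta^n\w}$ with $Q_{\theta^{k+1}\w}$ in place of $P_{\theta^{k+1}\w}$; imposing $y_{\theta^k\w}=\phi_{\theta^k\w}(x_{\theta^k\w})$ as forced by~\eqref{eq:thm:global:invariance:discrete}, these become the discrete counterparts of~\eqref{eq:dyn-split2a} and~\eqref{eq:dyn-split2b}. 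I would then establish the discrete version of Lemma~\ref{lemma:equiv}: if $(h,\phi)\in\cX_{M,N}$ and $h$ satisfies on $\cH$
\[
   h_{n,\w}(\xi)=\Phi_\w^n\xi+\dsum_{k=0}^{n-1}\Phi_{\theta^{k+1}\w}^{n-k-1}P_{\theta^{k+1}\w}f_{\theta^k\w}\prts{h_{k,\w}(\xi),\phi_{\theta^k\w}(h_{k,\w}(\xi))},
\]
then the discrete $F$-component relation (the analogue of~\eqref{eq:dyn-split2b} with $h$ in place of $x$) holds for all $(n,\w,\xi)\in\cH$ if and only if
\[
   \phi_\w(\xi)=-\dsum_{k=0}^{+\infty}\Phi_{\theta^{k+1}\w}^{-(k+1)}Q_{\theta^{k+1}\w}f_{\theta^k\w}\prts{h_{k,\w}(\xi),\phi_{\theta^k\w}(h_{k,\w}(\xi))}
\]
for all $(\w,\xi)\in\cG$. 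The series converges because, by~\eqref{ine:||f_w(x)||<=Lip(f_w)||x||},~\eqref{ine:||phi_w(xi)||<=N||xi||},~\eqref{ine:||h_n,w(xi)||<=M...} and~\ref{eq:split5}, its $k$-th term is bounded by $M(1+N)\alpha^-_{k+1,\theta^{k+1}\w}\Lip(f_{\theta^k\w})\alpha^+_{k,\w}\|\xi\|$, which sums to $M(1+N)\tau\|\xi\|$. One direction follows by applying $\Phi_{\theta^n\w}^{-n}$ to the $F$-component identity, using the cocycle factorisation $\Phi_{\theta^n\w}^{-n}\Phi_{\theta^{k+1}\w}^{n-k-1}Q_{\theta^{k+1}\w}=\Phi_{\theta^{k+1}\w}^{-(k+1)}Q_{\theta^{k+1}\w}$ and letting $n\to+\infty$, since $\|\Phi_{\theta^n\w}^{-n}\phi_{\theta^n\w}(h_{n,\w}(\xi))\|\le MN\|\xi\|\alpha^+_{n,\w}\alpha^-_{n,\theta^n\w}\to0$ by~\eqref{eq:CondicaoTeo:disc}; the converse follows by applying $\Phi_\w^n$ and splitting the series at index $n$, using the cocycle property of $\Psi$. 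Here, in contrast with the continuous case, no existence/uniqueness discussion is needed, since $\Psi$ is given explicitly by~\eqref{eq:Psi}.

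Then I would set $J(h,\phi)\colon\cH\to X$ equal to the right-hand side of the forward recursion, $L(h,\phi)\colon\cG\to X$ equal to minus the series above, and $T=(J,L)$ on $\cX_{M,N}$. Copying the continuous estimates — using~\ref{eq:split4},~\ref{eq:split5},~\eqref{ine:||f_w(x)-f_w(y)||<=Lip(f_w)||x-y||},~\eqref{eq:||phi_w(xi)-pi_w(bxi)||<=...},~\eqref{ine:||h_n,w(xi)-h_n,w(bxi)||<=M...} together with the identities $1+\sgm M(1+N)=M$ and $\tau M(1+N)=N$ coming from~\eqref{def:M+N} — gives $J\prts{\cX_{M,N}}\subseteq\cJ_M$, $L\prts{\cX_{M,N}}\subseteq\cL_N$, and, passing to the metrics $d_1,d_2$,
\[
   d\prts{T(h,\phi),T(g,\psi)}\le\prts{\sgm+\tau}\maxs{1+N,M}\,d\prts{(h,\phi),(g,\psi)},
\]
whose factor is $<1$ because $\sgm+\tau<1/2$, $N<1$ and $M<2$; thus $T$ is a contraction of the complete metric space $\cX_{M,N}$ and, by the Banach Fixed Point Theorem, has a unique fixed point $(h,\phi)$. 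By the discrete Lemma~\ref{lemma:equiv}, $\prts{h_{n,\w}(\xi),\phi_{\theta^n\w}(h_{n,\w}(\xi))}$ is the $\Psi$-trajectory through $(\xi,\phi_\w(\xi))$, so $\phi\in\cL_N$ is the unique map whose graphs $\cV_{\phi,\w}$ satisfy~\eqref{eq:thm:global:invariance:discrete}. Finally, writing $\Psi_\w^n(\xi,\phi_\w(\xi))=\prts{h_{n,\w}(\xi),\phi_{\theta^n\w}(h_{n,\w}(\xi))}$, the triangle inequality together with~\eqref{eq:||phi_w(xi)-pi_w(bxi)||<=...} and~\eqref{ine:||h_n,w(xi)-h_n,w(bxi)||<=M...} gives
\[
   \|\Psi_\w^n(\xi,\phi_\w(\xi))-\Psi_\w^n(\bxi,\phi_\w(\bxi))\|\le(1+N)\|h_{n,\w}(\xi)-h_{n,\w}(\bxi)\|\le M(1+N)\alpha^+_{n,\w}\|\xi-\bxi\|,
\]
so the last assertion holds with $C=M(1+N)\in\,]0,4[$. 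The single genuinely new ingredient relative to the continuous argument — and the step I expect to require the most care — is the index bookkeeping in the discrete Duhamel formula and in the shift and cocycle identities for $\Phi_{\theta^n\w}^{-n}$ that underlie the discrete version of Lemma~\ref{lemma:equiv}; all the remaining estimates transcribe verbatim from the continuous proof with integrals replaced by sums.
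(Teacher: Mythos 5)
Your proposal is correct and follows the same route as the paper: fix $M,N$ via Lemma~\ref{lemma:M&N}, split the dynamics into the $E$ and $F$ components, prove the discrete analogue of Lemma~\ref{lemma:equiv}, and run the Banach fixed-point argument for $T=(J,L)$ on $\cX_{M,N}$, arriving at $C=M(1+N)$. Your explicit remarks that $\cB(\N_0)\otimes\Sgm$-measurability reduces to sectionwise $\Sgm$-measurability and that $\Psi$ is defined outright by the recursion~\eqref{eq:Psi} (so no existence/uniqueness hypothesis is needed) are correct and make explicit the simplifications the paper treats implicitly.
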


The proof of Theorem~\ref{thm:global:disc} is given in Subsection~\ref{Proof of Theorem discrete}.

\subsection{Corollaries}
We give now some corollaries to Theorem~\ref{thm:global:disc}. Throughout this subsection we consider a real number $\delta \in\ ]0,1/4[$ and a random variable $G \colon \W \to\ ]0,+\infty[$ such that $\dsum_{k=-\infty}^{+\infty} G(\theta^k\w) \le 1$ for all $\w \in \W$.

\begin{corollary}[{Tempered exponential dichotomies}]\label{corollary:tempered:disc}
   Consider a measurable linear RDS $\Phi$ on a Banach space $X$ over a metric dynamical system $\Theta \equiv (\W,\Sgm,\mu,\theta)$ admitting a tempered exponential dichotomy with bounds
   \begin{equation*}
      \alpha^+_{n,\w} = K(\w) \e^{a(\w)n}
      \ \ \ \text{ and  } \ \ \
      \alpha^-_{n,\theta^n\w} = K(\theta^n\w) \e^{b(\w)n}
   \end{equation*}
   such that $a(\w)+b(\w) < 0$ for all $\w \in \W$. Consider a $\theta$-invariant random variable $\gamma(\w)>0$ satisfying $a(\w) + b(\w)+\gamma(\w)<0$ and let $f \in \sF$. If
   \begin{equation*}
      \Lip(f_\w)
      \le \dfrac\delta{K(\theta \w)}\mins{\e^{a(\w)} G(\w),
         \e^{b(\w)}\dfrac{1-\e^{a(\w)+b(\w)+\gamma(\w)}}{\lbd_{K,\gamma(\w),\w}}}
   \end{equation*}
   for all $\w \in \W$, then the same conclusions of Theorem~\ref{thm:global:disc} hold.
\end{corollary}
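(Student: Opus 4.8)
The plan is to mimic the proof of Corollary~\ref{corollary:tempered:cont}, replacing the variation-of-constants integrals by the corresponding series, and to verify the two hypotheses of Theorem~\ref{thm:global:disc}: the limit condition~\eqref{eq:CondicaoTeo:disc} and the smallness $\sgm+\tau<1/2$. For the first one, since $K$ is tempered we have $K(\theta^n\w)\le\e^{\gamma(\w)n}\lbd_{K,\gamma(\w),\w}$ for every $n\in\N_0$, and therefore
\[
   \alpha^+_{n,\w}\,\alpha^-_{n,\theta^n\w}
   = K(\w)\,K(\theta^n\w)\,\e^{(a(\w)+b(\w))n}
   \le K(\w)\,\lbd_{K,\gamma(\w),\w}\,\e^{(a(\w)+b(\w)+\gamma(\w))n},
\]
which goes to $0$ as $n\to+\infty$ because $a(\w)+b(\w)+\gamma(\w)<0$; hence~\eqref{eq:CondicaoTeo:disc} holds.

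Next I would estimate $\sgm$. Using the $\theta$-invariance of $a$ one has $\alpha^+_{n-k-1,\theta^{k+1}\w}=K(\theta^{k+1}\w)\,\e^{a(\w)(n-k-1)}$, and bounding $\Lip(f_{\theta^k\w})$ by the first entry of the minimum in the hypothesis, namely $\Lip(f_{\theta^k\w})\le\frac{\delta}{K(\theta^{k+1}\w)}\,\e^{a(\w)}\,G(\theta^k\w)$, every factor $K(\theta^{k+1}\w)$ and every power of $\e^{a(\w)}$ cancels, so that
\[
   \frac1{\alpha^+_{n,\w}}\dsum_{k=0}^{n-1}\alpha^+_{n-k-1,\theta^{k+1}\w}\,\Lip(f_{\theta^k\w})\,\alpha^+_{k,\w}
   \le\delta\dsum_{k=0}^{n-1}G(\theta^k\w)
   \le\delta\dsum_{k=-\infty}^{+\infty}G(\theta^k\w)
   \le\delta.
\]
Thus $\sgm\le\delta$, exactly as in Corollary~\ref{corollary:tempered:cont}.

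Then I would estimate $\tau$. I would substitute $\alpha^-_{k+1,\theta^{k+1}\w}=K(\theta^{k+1}\w)\,\e^{b(\w)(k+1)}$ and $\alpha^+_{k,\w}=K(\w)\,\e^{a(\w)k}$, bound $\Lip(f_{\theta^k\w})$ by the second entry of the minimum --- whose factors $\e^{b(\w)}$ and $1/K(\theta^{k+1}\w)$ are tailored exactly to absorb the fact that $\alpha^-$ is here evaluated at time $k+1$ and base point $\theta^{k+1}\w$ --- and then use temperedness once more, in the form $K(\w)\le\e^{\gamma(\w)k}\lbd_{K,\gamma(\w),\theta^k\w}$, to cancel the surviving $\lbd$-denominator (it is essential that $\lbd$ be taken at the base point $\theta^k\w$). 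After these cancellations the general term of the series is a fixed multiple of $\e^{(a(\w)+b(\w)+\gamma(\w))k}$; since $a(\w)+b(\w)+\gamma(\w)<0$, summing the geometric series produces the factor $\prts{1-\e^{a(\w)+b(\w)+\gamma(\w)}}^{-1}$, which cancels the factor $1-\e^{a(\w)+b(\w)+\gamma(\w)}$ carried along from the Lipschitz bound, leaving $\tau\le\delta$.

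Putting the two estimates together, $\sgm+\tau\le2\delta<1/2$, and together with~\eqref{eq:CondicaoTeo:disc} this places us in the hypotheses of Theorem~\ref{thm:global:disc}, from which the conclusion follows. The only genuinely delicate point I anticipate is the bookkeeping in the estimate of $\tau$: one must keep careful track of the index shift $k\leftrightarrow k+1$ coming from the discrete variation-of-constants formula~\eqref{eq:Psi}, and evaluate the tempered constant $\lbd_{K,\gamma(\w),\theta^k\w}$ at the correct base point, so that all the $K$-factors and all the exponentials telescope precisely; once the Lipschitz hypothesis is read in the shifted form in which the statement presents it, the computation is routine.
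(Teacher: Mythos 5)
Your proposal follows the same route as the paper: verify~\eqref{eq:CondicaoTeo:disc} from temperedness and $a(\w)+b(\w)<0$; use the first entry of the minimum to obtain $\sgm\le\delta$; use the second entry and temperedness to obtain $\tau\le\delta$; invoke Theorem~\ref{thm:global:disc}. The $\sgm$ estimate is precisely the paper's, and you correctly identify that the factor $1/K(\theta\w)$ in the hypothesis, applied at $\theta^k\w$, cancels the $K(\theta^{k+1}\w)$ coming from $\alpha^-_{k+1,\theta^{k+1}\w}$.

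The problem is the role you assign to $\e^{b(\w)}$. You claim it ``is tailored exactly to absorb the fact that $\alpha^-$ is here evaluated at time $k+1$'', but it does the opposite: since $\alpha^-_{k+1,\theta^{k+1}\w}=K(\theta^{k+1}\w)\e^{b(\w)k}\e^{b(\w)}$, the index shift already contributes one extra $\e^{b(\w)}$, and the $\e^{+b(\w)}$ carried by the Lipschitz hypothesis supplies a second. Running the estimate carefully gives
\begin{equation*}
   \alpha^-_{k+1,\theta^{k+1}\w}\Lip(f_{\theta^k\w})\alpha^+_{k,\w}
   \le\delta\,\e^{2b(\w)}\prts{1-\e^{a(\w)+b(\w)+\gamma(\w)}}\e^{(a(\w)+b(\w)+\gamma(\w))k},
\end{equation*}
so after summing the geometric series you land on $\tau\le\delta\sup_{\w\in\W}\e^{2b(\w)}$, which need not be $\le\delta$, nor even $<1/4$, since the hypotheses only force $a+b<0$ and $b$ may well be positive. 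The paper's own proof at this step silently drops the factor $\e^{2b(\w)}$, which strongly suggests a sign typo in the statement: the second entry of the minimum should read $\e^{-b(\w)}$ (or else $b\le0$ must be assumed). Until that is corrected, the claimed bound $\tau\le\delta$, and hence the verification of $\sgm+\tau<1/2$, does not hold under the hypothesis as literally stated; once it is corrected, your bookkeeping and the rest of the argument are exactly right and coincide with the paper's.
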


\begin{proof}
   In the discrete setting $\T=\Z$, conditions~\eqref{eq:tempered} and~\eqref{eq:tempered:lim} are equivalent. Since $K$ is a tempered random variable and $a(\w)+b(\w)<0$, condition~\eqref{eq:CondicaoTeo:disc} holds since it is equivalent to
   \begin{equation*}
      \lim_{n \to +\infty} K(\theta^n \w) \e^{\prts{a(\w) + b(\w)}n} = 0
         \ \ \ \text{ for all } \w \in \W.
   \end{equation*}

   From
   \begin{align*}
      \dfrac{1}{\alpha^+_{n,\w}} \dsum_{k=0}^{n-1}
         \alpha^+_{n-k-1,\theta^{k+1}\w} \Lip(f_{\theta^k\w}) \alpha^+_{k,\w}
      & = \dsum_{k=0}^{n-1}
         K(\theta^{k+1} \w) \e^{-a(\w)} \Lip(f_{\theta^k \w})\\
      & \le \delta \dsum_{k=-\infty}^{+\infty} G(\theta^k\w)\\
      & \le \delta,
   \end{align*}
   we conclude that $\sgm \le \delta$. On the other hand, since $K(\w)\le \e^{\gamma(\w) k}\lbd_{K,\gamma(\w),\theta^k\w}$ for all $\w \in \W$, we have
   \begin{align*}
      &\dsum_{k=0}^{+\infty} \alpha^-_{k+1,\theta^{k+1}\w} \Lip(f_{\theta^k\w}) \alpha^+_{k,\w}\\
      & =  \dsum_{k=0}^{+\infty}
         K(\w)K(\theta^{k+1}\w)\e^{(a(\w)+b(\w))k}\e^{b(\w)}\Lip(f_{\theta^k\w})\\
      & \le \delta(1-\e^{a(\w)+b(\w)+\gamma(\w)}) \dsum_{k=0}^{+\infty}
         \e^{(a(\w)+b(\w)+\gamma(\w))k}\\
      & \le \delta,
   \end{align*}
   and thus it follows that $\tau \le \delta$. Therefore $\sigma+\tau \le 2\delta < 1/2$ and we are in conditions to apply Theorem~\ref{thm:global:disc}.
\end{proof}

The proofs of the next corollaries are similar to the continuous case as illustrated in the proof of Corollary~\ref{corollary:tempered:disc}, and will be omitted. Examples for this type of dichotomies can be found in Example~\ref{ex:2}.

\begin{corollary}\label{corollary:tempered:integral:disc}
   Let $a, b \colon \W \to\ ]0,+\infty[$ and $K \colon \W \to [1,+\infty[$ be random variables and let $\Phi$ be a measurable linear RDS on a Banach space $X$ over a measurable dynamical system $\Theta \equiv (\W,\Sgm,\mu,\theta)$ admitting a dichotomy with bounds \begin{equation*}
      \alpha^+_{n,\w} = K(\w) \e^{S_a(n,\w)}
      \ \ \ \text{ and  } \ \ \
      \alpha^-_{n,\theta^n\w} = K(\theta^n \w) \e^{S_b(n,\w)}
   \end{equation*}
   such that $K(\w) \e^{a(\w)+b(\w)} \le K(\theta \w)$.
   Let $f \in \sF$ be such that
    \begin{equation*}
      \Lip(f_\w)
      \le \delta \mins{\dfrac{\e^{a(\w)}}{K(\theta \w)} G(\w),
         \prts{\dfrac{1}{K(\w)} - \dfrac{\e^{a(\w)+b(\w)}}{K(\theta \w)}}\dfrac{1}{K(\theta\w)}\e^{-b(\w)}}.
   \end{equation*}
   If
   \begin{equation*}\label{eq:corol3:disc}
      \dlim_{n \to +\infty} K(\theta^n \w) \e^{S_{a+b}(n,\w)} = 0
   \end{equation*}
   for all $\w \in \W$, then the same conclusion of Theorem~\ref{thm:global:disc} holds.
\end{corollary}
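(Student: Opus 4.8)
The plan is to verify the two hypotheses of Theorem~\ref{thm:global:disc} for the given bounds and then simply invoke it, mimicking the proof of Corollary~\ref{corollary:tempered:disc}. Throughout I would write $S_Z(n,\w)=\sum_{r=0}^{n-1}Z(\theta^r\w)$ as in Example~\ref{ex:2} and use repeatedly the additivity $S_Z(k,\w)+S_Z(n-k,\theta^k\w)=S_Z(n,\w)$ together with $S_{a+b}=S_a+S_b$. First I would check~\eqref{eq:CondicaoTeo:disc}: since
$$\alpha^+_{n,\w}\alpha^-_{n,\theta^n\w}=K(\w)K(\theta^n\w)\e^{S_a(n,\w)+S_b(n,\w)}=K(\w)\cdot K(\theta^n\w)\e^{S_{a+b}(n,\w)},$$
this tends to $0$ for every $\w$ by the standing hypothesis $\lim_{n\to+\infty}K(\theta^n\w)\e^{S_{a+b}(n,\w)}=0$.

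Next I would estimate $\sgm$. Using the additivity of $S_a$ one computes
$$\dfrac{\alpha^+_{n-k-1,\theta^{k+1}\w}\,\alpha^+_{k,\w}}{\alpha^+_{n,\w}}=K(\theta^{k+1}\w)\e^{-a(\theta^k\w)},$$
so that, after replacing $\w$ by $\theta^k\w$ in the first entry of the minimum bounding $\Lip(f_\w)$,
$$\dfrac{1}{\alpha^+_{n,\w}}\dsum_{k=0}^{n-1}\alpha^+_{n-k-1,\theta^{k+1}\w}\Lip(f_{\theta^k\w})\alpha^+_{k,\w}=\dsum_{k=0}^{n-1}K(\theta^{k+1}\w)\e^{-a(\theta^k\w)}\Lip(f_{\theta^k\w})\le\delta\dsum_{k=0}^{n-1}G(\theta^k\w)\le\delta,$$
hence $\sgm\le\delta$.

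The only slightly delicate point is the estimate of $\tau$, and this is where I expect to spend most of the effort. The idea is to introduce $c_k=c_k(\w)=\e^{S_{a+b}(k,\w)}/K(\theta^k\w)$, so that $c_0=1/K(\w)$ and, from $S_{a+b}(k+1,\w)=S_{a+b}(k,\w)+(a+b)(\theta^k\w)$,
$$c_k-c_{k+1}=\e^{S_{a+b}(k,\w)}\prts{\dfrac{1}{K(\theta^k\w)}-\dfrac{\e^{a(\theta^k\w)+b(\theta^k\w)}}{K(\theta^{k+1}\w)}}\ge0,$$
the non-negativity coming from the hypothesis $K(\w)\e^{a(\w)+b(\w)}\le K(\theta\w)$ applied at $\theta^k\w$; thus $(c_k)$ is non-increasing and $\ge0$, hence convergent. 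Using $S_b(k+1,\w)+S_a(k,\w)=S_{a+b}(k,\w)+b(\theta^k\w)$ one then gets
$$\alpha^-_{k+1,\theta^{k+1}\w}\Lip(f_{\theta^k\w})\alpha^+_{k,\w}=K(\w)K(\theta^{k+1}\w)\e^{S_{a+b}(k,\w)}\e^{b(\theta^k\w)}\Lip(f_{\theta^k\w}),$$
and substituting the second entry of the minimum bounding $\Lip(f_{\theta^k\w})$ produces exactly the telescoping estimate $\le\delta K(\w)(c_k-c_{k+1})$. Summing over $k\ge0$,
$$\dsum_{k=0}^{+\infty}\alpha^-_{k+1,\theta^{k+1}\w}\Lip(f_{\theta^k\w})\alpha^+_{k,\w}\le\delta K(\w)\prts{c_0-\lim_{k\to+\infty}c_k}\le\delta K(\w)c_0=\delta,$$
so $\tau\le\delta$.

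Putting the two estimates together gives $\sgm+\tau\le2\delta<1/2$ (as $\delta<1/4$), so Theorem~\ref{thm:global:disc} applies and yields the asserted conclusion. The main obstacle is spotting the telescoping substitution $c_k=\e^{S_{a+b}(k,\w)}/K(\theta^k\w)$ — the discrete shadow of recognizing the integrand of $\tau$ in the continuous Corollary~\ref{corollary:tempered:integral:cont} as a total derivative; once that is in place, the rest is bookkeeping with the Birkhoff sums $S_a$, $S_b$ and the shift-equivariance of the Lipschitz bound on $f$.
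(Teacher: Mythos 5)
Your proof is correct, and it follows exactly the route the paper indicates when it says the discrete corollaries are proved by adapting the continuous argument (here the telescoping sequence $c_k=\e^{S_{a+b}(k,\w)}/K(\theta^k\w)$ is the precise discrete analogue of the total derivative used in Corollary~\ref{corollary:tempered:integral:cont}). All three verifications — the limit condition, $\sgm\le\delta$, and the telescoping bound giving $\tau\le\delta$ — check out, including the role of $K(\w)\e^{a(\w)+b(\w)}\le K(\theta\w)$ in making the second entry of the minimum nonnegative and $(c_k)$ non-increasing.
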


Consider now  measurable maps $a, b \colon \W \to \R_0^+$, $K \colon \W \to [1,+\infty[$ and a linear measurable RDS $\Phi$ admitting a generalized dichotomy with bounds $\alpha^+$ and $\alpha^-$ given by
\begin{equation}\label{eq:dich:Ka:Kb}
   \alpha^+_{n,\w}
   = K(\w) \dfrac{a(\w)}{a(\theta^n\w)}
   \ \ \ \text{ and } \ \ \
   \alpha^-_{n,\theta^n\w}
   = K(\theta^n\w) \dfrac{b(\w)}{b(\theta^n\w)}.
\end{equation}
For each $\w\in\W$ we define the function $H_\w\colon\Z\to\R$  by
  \[
      H_\w(n)=-\dfrac{1}{a(\theta^n\w)b(\theta^n\w)K(\theta^n\w)}.
  \]
Clearly, $n\in\Z$ we have $H_{\theta^n\w}(k)=H_\w(n+k)$ for all $k,n \in Z$ and all $\w \in \W$.

\begin{corollary}\label{corollary 1:disc}
   Let $a, b \colon \W \to\ ]0,+\infty[$ and $K \colon \W \to [1,+\infty[$ be random variables and let $\Phi$ be a measurable linear RDS on a Banach space $X$ over a measurable dynamical system $\Theta \equiv (\W,\Sgm,\mu,\theta)$ admitting a dichotomy with bounds as in~\eqref{eq:dich:Ka:Kb} and such that
   the map $H_\w$ is an non-decreasing function for each $\w\in\W$. Let $f \in \sF$ satifying
   \begin{equation}\label{eq:corol:f:discrete}
      \Lip(f_\w)
      \le \delta \mins{\frac{a(\w)b(\theta\w)}{K(\theta\w)} (H_{\theta\w}(0)-H_\w(0)), G(\w)} \ \ \ \text{ for all } \w \in \W.
   \end{equation}
   If
   \begin{equation}\label{eq:example:lim d(t)/a(t)b(t)=0:discrete}
      \dlim_ {n\to+\infty}
         \frac{K(\theta^n\w)}{a(\theta^n\w)b(\theta^n\w)}=0,
   \end{equation}
   then the same conclusions of Theorem~\ref{thm:global} hold.
\end{corollary}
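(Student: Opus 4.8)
The plan is to deduce the statement from Theorem~\ref{thm:global:disc} by verifying its two hypotheses, following the same route as in Corollary~\ref{corollary:tempered:disc} and in its continuous-time counterpart Corollary~\ref{corollary:a:b:cont}. First I would check the convergence condition~\eqref{eq:CondicaoTeo:disc}: substituting the bounds~\eqref{eq:dich:Ka:Kb} one gets $\alpha^+_{n,\w}\alpha^-_{n,\theta^n\w}=K(\w)a(\w)b(\w)\cdot\dfrac{K(\theta^n\w)}{a(\theta^n\w)b(\theta^n\w)}$, which tends to $0$ by~\eqref{eq:example:lim d(t)/a(t)b(t)=0:discrete}. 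It then remains to show $\sgm+\tau<1/2$.

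For $\sgm$, I would first observe that $n\mapsto a(\w)/a(\theta^n\w)$ is a cocycle, so that the quotient $\alpha^+_{n-k-1,\theta^{k+1}\w}\,\alpha^+_{k,\w}/\alpha^+_{n,\w}$ collapses, after cancelling the common $a$ and $K$ factors, to $K(\theta^{k+1}\w)\,a(\theta^{k+1}\w)/a(\theta^k\w)$. Hence $\sgm=\sup_{(n,\w)}\sum_{k=0}^{n-1}K(\theta^{k+1}\w)\dfrac{a(\theta^{k+1}\w)}{a(\theta^k\w)}\Lip(f_{\theta^k\w})$, and — exactly as in the proof of Corollary~\ref{corollary:tempered:disc} — the $G$-part of the Lipschitz bound~\eqref{eq:corol:f:discrete} together with $\sum_k G(\theta^k\w)\le 1$ gives $\sgm\le\delta$.

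For $\tau$ I would use the telescoping structure encoded in $H_\w$. Evaluating~\eqref{eq:corol:f:discrete} at $\theta^k\w$ and using $H_{\theta^n\w}(k)=H_\w(n+k)$, the relevant part of that bound reads $\Lip(f_{\theta^k\w})\le\delta\,\dfrac{a(\theta^k\w)b(\theta^{k+1}\w)}{K(\theta^{k+1}\w)}\bigl(H_\w(k+1)-H_\w(k)\bigr)$; multiplying by $\alpha^-_{k+1,\theta^{k+1}\w}\alpha^+_{k,\w}=K(\w)a(\w)b(\w)\,\dfrac{K(\theta^{k+1}\w)}{a(\theta^k\w)b(\theta^{k+1}\w)}$ cancels all the shifted $a$, $b$, $K$ factors and leaves the summand $\delta\,K(\w)a(\w)b(\w)\bigl(H_\w(k+1)-H_\w(k)\bigr)$. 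Summing the telescoping series and using $H_\w(0)=-\dfrac{1}{a(\w)b(\w)K(\w)}$ yields $\tau\le\delta+\delta\,K(\w)a(\w)b(\w)\lim_{n\to+\infty}H_\w(n)$; the last limit exists and is $\le 0$ since $H_\w$ is non-decreasing and negative — and is in fact $0$, writing $H_\w(n)=-\dfrac{K(\theta^n\w)}{a(\theta^n\w)b(\theta^n\w)}\cdot\dfrac{1}{K(\theta^n\w)^2}$ and using $K\ge 1$ with~\eqref{eq:example:lim d(t)/a(t)b(t)=0:discrete} — so $\tau\le\delta$. Since $\delta<1/4$, this gives $\sgm+\tau\le 2\delta<1/2$, and Theorem~\ref{thm:global:disc} then provides all the conclusions.

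The hard part will be the index bookkeeping in the estimate of $\tau$: one has to check that, after the shift by $\theta^k$, the Lipschitz hypothesis~\eqref{eq:corol:f:discrete} reproduces precisely the increments $H_\w(k+1)-H_\w(k)$ of a telescoping sum whose partial sums are controlled — the monotonicity assumption on $H_\w$ being exactly what makes these increments nonnegative and the passage to the limit legitimate — and that the resulting limiting term does not break $\tau\le\delta$. The estimate of $\sgm$, by comparison, is routine once the cocycle cancellation is spotted, and it mirrors the computation already carried out in Corollary~\ref{corollary:tempered:disc}.
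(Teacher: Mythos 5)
Your overall strategy is right, and the $\tau$ estimate is carried out correctly: the $H$-part of the Lipschitz bound, after the cocycle cancellation, produces exactly the telescoping increments $\delta K(\w)a(\w)b(\w)\bigl(H_\w(k+1)-H_\w(k)\bigr)$, and the limit $\lim_n H_\w(n)=0$ via the factorization $H_\w(n)=-\frac{K(\theta^n\w)}{a(\theta^n\w)b(\theta^n\w)}\cdot\frac{1}{K(\theta^n\w)^2}$ together with~\eqref{eq:example:lim d(t)/a(t)b(t)=0:discrete} and $K\ge1$ is precisely how the continuous analogue Corollary~\ref{corollary:a:b:cont} handles this step. The verification of~\eqref{eq:CondicaoTeo:disc} is also fine.

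However, the $\sgm$ estimate — which you dismiss as ``routine'' — does not follow from the hypothesis as you claim. You correctly collapse the kernel to $K(\theta^{k+1}\w)\,\dfrac{a(\theta^{k+1}\w)}{a(\theta^k\w)}$, but the $G$-part of~\eqref{eq:corol:f:discrete} is only $\Lip(f_{\theta^k\w})\le\delta G(\theta^k\w)$, which leaves
\[
\sgm\le\delta\,\sup_{(n,\w)}\sum_{k=0}^{n-1}K(\theta^{k+1}\w)\,\frac{a(\theta^{k+1}\w)}{a(\theta^k\w)}\,G(\theta^k\w),
\]
and this is \emph{not} bounded by $\delta$ in general: $K\ge1$ and the ratio $a(\theta^{k+1}\w)/a(\theta^k\w)$ is unrestricted, so $\sum_k G(\theta^k\w)\le1$ does not suffice. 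This is not ``exactly as in'' Corollary~\ref{corollary:tempered:disc}: there the kernel is $K(\theta^{k+1}\w)\e^{-a(\w)}$ and the $G$-part of the Lipschitz bound is $\dfrac{\delta\e^{a(\w)}}{K(\theta\w)}G(\w)$, so the factors cancel exactly; the same cancellation occurs in Corollary~\ref{corollary:tempered:integral:disc}, whose $G$-part is $\dfrac{\delta\e^{a(\w)}}{K(\theta\w)}G(\w)$ against a kernel $K(\theta^{k+1}\w)\e^{-a(\theta^k\w)}$. By that same pattern, here the $G$-part would need to be of the form $\dfrac{\delta\,a(\w)}{K(\theta\w)\,a(\theta\w)}\,G(\w)$ for $\sgm\le\delta$ to come out. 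As stated, the bound $\delta G(\w)$ does not give you $\sgm\le\delta$, and this step of your argument is a genuine gap; you should have flagged the discrepancy rather than deferring to an analogy that does not apply.
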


\subsection{Proof of Theorem~\ref{thm:global:disc}}\label{Proof of Theorem discrete}

The proof of Theorem~\ref{thm:global:disc} is similar to the proof the continuous time (Theorem~\ref{thm:global}) and therefore we only give a sketch of the necessary adaptations. Fix $M$ and $N$ as in Lemma~\ref{lemma:M&N}. Given $\w\in \W$ and $v_\w=(\xi_\w,\eta_\w)\in E_\w \times F_\w$,
using~\eqref{eq:Psi}, it follows that for each $n\in\N_0$, the trajectory
$\prts{v_{\theta^n\w}}_{n}$, with $v_{\theta^n\w}=\prts{x_{\theta^n\w},y_{\theta^n\w}}\in E_{\theta^n\w}\x F_{\theta^n\w}$, satisfies the following equations
\begin{align}
   x_{\theta^n\w} & = \Phi_{\w}^n \xi_\w + \sum_{k=0}^{n-1} \Phi_{\theta^{k+1}\w}^{n-k-1} P_{\theta^{k+1}\w} f_{\theta^k\w}(x_{\theta^k\w},y_{\theta^k\w}),
      \label{eq:dyn-split1a:discrete}\\
   y_{\theta^n\w}  &= \Phi_{\w}^n \eta_\w +\sum_{k=0}^{n-1} \Phi_{\theta^{k+1}\w}^{n-k-1} Q_{\theta^{k+1}\w} f_{\theta^{k}\w}(x_{\theta^{k}\w},y_{\theta^{k}\w}).
      \label{eq:dyn-split1b:discrete}
\end{align}
In view of the forward invariance required in \eqref{eq:thm:global:invariance:discrete}, each
trajectory of~\eqref{eq:Psi} starting in $\cV_{\phi,\w}$ must be in
$\cV_{\phi,\theta^n\w}$ for every $n \in \N_0$, and thus the
equations~\eqref{eq:dyn-split1a:discrete} and~\eqref{eq:dyn-split1b:discrete} can be written in
the form
\begin{align}
   x_{\theta^n\w} & = \Phi_{\w}^n \xi_\w + \sum_{k=0}^{n-1} \Phi_{\theta^{k+1}\w}^{n-k-1} P_{\theta^{k+1}\w} f_{\theta^k\w}(x_{\theta^k\w},\phi_{\theta^k\w}(x_{\theta^{k}\w})),
      \label{eq:dyn-split2a:discrete}\\
   \phi_{\theta^n\w}(x_{\theta^n\w})  &= \Phi_{\w}^n  \phi_{\w}(\xi_\w) +\sum_{k=0}^{n-1} \Phi_{\theta^{k+1}\w}^{n-k-1} Q_{\theta^{k+1}\w} f_{\theta^{k}\w}(x_{\theta^{k}\w},\phi_{\theta^k\w}(x_{\theta^{k}\w})).
      \label{eq:dyn-split2b:discrete}
\end{align}
Once again, to prove that equations~\eqref{eq:dyn-split2a:discrete} and~\eqref{eq:dyn-split2b:discrete} have
solutions we will use Banach Fixed Point Theorem.

We rewrite conditions~\eqref{eq:dyn-split2a:discrete}
and~\eqref{eq:dyn-split2b:discrete} as in Lemma~\ref{lemma:equiv}.

\begin{lemma} \label{lemma:equiv:discrete}
   Consider $(h,\phi) \in \cX_{M,N}$ such that for every $\w \in \W$, $n \in \N_0$ and $\xi \in E_\w$
   \begin{align}
   h_{n,\w}(\xi) & = \Phi_{\w}^n \xi + \sum_{k=0}^{n-1} \Phi_{\theta^{k+1}\w}^{n-k-1} P_{\theta^{k+1}\w} f_{\theta^{k}\w}(h_{k,\w}(\xi),\phi_{\theta^{k}\w}(h_{k,\w}(\xi))).
      \label{eq:dyn-split3a:discrete}
   \end{align}
   Then the following properties are equivalent:
   \begin{enumerate}[\lb=$\alph*)$,\lm=5mm]
      \item for every $\w \in \W$, $n \in \N_0$, and $\xi \in E_\w$
        \begin{align}
            \phi_{\theta^n\w}(h_{n,\w}(\xi))  &= \Phi_{\w}^n  \phi_{\w}(\xi) +\sum_{k=0}^{n-1} \Phi_{\theta^{k+1}\w}^{n-k-1} Q_{\theta^{k+1}\w} f_{\theta^{k}\w}(h_{k,\w}(\xi),\phi_{\theta^{k}\w}(h_{k,\w}(\xi)));
            \label{eq:dyn-split3b:discrete}
        \end{align}
      \item for every $\w \in \W$, $n \in \N_0$, and $\xi \in E_\w$
        \begin{equation} \label{eq:phi_n:discrete}
            \phi_{\w}(\xi)
               = -\sum_{k=0}^{+\infty} \Phi_{\w}^{-(k+1)}Q_{\theta^{k+1}\w} f_{\theta^{k}\w}(h_{k,\w}(\xi),\phi_{\theta^{k}\w}(h_{k,\w}(\xi))).
        \end{equation}
   \end{enumerate}
\end{lemma}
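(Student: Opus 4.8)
The plan is to follow the proof of Lemma~\ref{lemma:equiv} step by step, replacing the Bochner integrals by series and carrying out all the index shifts by means of the cocycle identities on the fibres $F_\w=\ker P_\w$ furnished by the $P$-invariant splitting; here, as in~\eqref{eq:phi_n:discrete}, $\Phi_\w^{-m}$ denotes the inverse of $\Phi_\w^m|_{F_\w}\colon F_\w\to F_{\theta^m\w}$. The one algebraic fact used repeatedly is that, for $n>k$,
$$\Phi_\w^{-n}\circ\bigl(\Phi_{\theta^{k+1}\w}^{n-k-1}|_{F_{\theta^{k+1}\w}}\bigr)=\Phi_\w^{-(k+1)},$$
the discrete analogue of $\phiw{t}(\Phi_{\theta^s\w}^{t-s}|_{F_{\theta^s\w}})=\phiw{s}$ used in the continuous case, which follows from the invariance and invertibility of the kernels (conditions $iii)$--$v)$ in the definition of $P$-invariant splitting).

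First I would check that the series in~\eqref{eq:phi_n:discrete} converges absolutely. Exactly as in the estimate~\eqref{ine:||Phi^-s...f...||<=M(1+N)...}, combining~\eqref{ine:||f_w(x)||<=Lip(f_w)||x||},~\eqref{ine:||phi_w(xi)||<=N||xi||},~\eqref{ine:||h_n,w(xi)||<=M...} and~\ref{eq:split5} yields, for every $(\w,\xi)\in\cG$ and every $k\in\N_0$,
$$\bigl\|\Phi_\w^{-(k+1)}Q_{\theta^{k+1}\w}f_{\theta^k\w}(h_{k,\w}(\xi),\phi_{\theta^k\w}(h_{k,\w}(\xi)))\bigr\|\le M(1+N)\,\alpha^-_{k+1,\theta^{k+1}\w}\Lip(f_{\theta^k\w})\,\alpha^+_{k,\w}\,\|\xi\|,$$
so that the series is bounded by $M(1+N)\tau\|\xi\|<+\infty$.

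For the implication $a)\Rightarrow b)$, assuming~\eqref{eq:dyn-split3b:discrete} I would apply $\Phi_\w^{-n}$ to both sides — each summand on the right, and $\phi_\w(\xi)$, lies in the correct fibre by~\eqref{eq:phi_w(xi):in:F_w} and the invariance of the kernels — and use the relation above together with $\Phi_\w^{-n}\circ(\Phi_\w^n|_{F_\w})=\Id_{F_\w}$ to obtain
$$\phi_\w(\xi)=\Phi_\w^{-n}\phi_{\theta^n\w}(h_{n,\w}(\xi))-\sum_{k=0}^{n-1}\Phi_\w^{-(k+1)}Q_{\theta^{k+1}\w}f_{\theta^k\w}(h_{k,\w}(\xi),\phi_{\theta^k\w}(h_{k,\w}(\xi))).$$
By~\ref{eq:split5},~\eqref{ine:||phi_w(xi)||<=N||xi||} and~\eqref{ine:||h_n,w(xi)||<=M...} we have $\|\Phi_\w^{-n}\phi_{\theta^n\w}(h_{n,\w}(\xi))\|=\|\Phi_\w^{-n}Q_{\theta^n\w}\phi_{\theta^n\w}(h_{n,\w}(\xi))\|\le MN\|\xi\|\alpha^+_{n,\w}\alpha^-_{n,\theta^n\w}$, which tends to $0$ as $n\to+\infty$ by~\eqref{eq:CondicaoTeo:disc}; letting $n\to+\infty$ gives~\eqref{eq:phi_n:discrete}. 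For $b)\Rightarrow a)$, assuming~\eqref{eq:phi_n:discrete} I would apply $\Phi_\w^n$ to both sides and split the series at $k=n-1$: the cocycle relation converts the first $n$ terms into $-\sum_{k=0}^{n-1}\Phi_{\theta^{k+1}\w}^{n-k-1}Q_{\theta^{k+1}\w}f_{\theta^k\w}(\cdots)$, while re-indexing $k=n+j$ turns the tail into $-\sum_{j=0}^{+\infty}\Phi_{\theta^n\w}^{-(j+1)}Q_{\theta^{n+j+1}\w}f_{\theta^{n+j}\w}(h_{n+j,\w}(\xi),\phi_{\theta^{n+j}\w}(h_{n+j,\w}(\xi)))$, which equals $-\phi_{\theta^n\w}(h_{n,\w}(\xi))$ by~\eqref{eq:phi_n:discrete} applied at the point $(\theta^n\w,h_{n,\w}(\xi))\in\cG$; solving for $\phi_{\theta^n\w}(h_{n,\w}(\xi))$ gives~\eqref{eq:dyn-split3b:discrete}.

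The main point requiring care is the identity $h_{n+j,\w}(\xi)=h_{j,\theta^n\w}(h_{n,\w}(\xi))$ used in the last step — the discrete counterpart of the remark closing the proof of Lemma~\ref{lemma:equiv}. Here, however, no uniqueness hypothesis on $\Psi$ is needed: using the cocycle property, \eqref{eq:dyn-split3a:discrete} implies the first-order recursion
$$h_{m+1,\w}(\xi)=\Phi_{\theta^m\w}^1 h_{m,\w}(\xi)+P_{\theta^{m+1}\w}f_{\theta^m\w}(h_{m,\w}(\xi),\phi_{\theta^m\w}(h_{m,\w}(\xi))),$$
and both sides of the claimed identity satisfy this same recursion in $j$ over the base point $\theta^n\w$ with the common value $h_{n,\w}(\xi)$ at $j=0$; note that~\eqref{h_n,w(xi):in:E(t^n(w))} ensures $h_{n,\w}(\xi)\in E_{\theta^n\w}$ so that $h_{\cdot,\theta^n\w}(h_{n,\w}(\xi))$ makes sense and~\eqref{eq:h_0,w(xi)=xi} supplies the initial value, whence an induction on $j$ concludes. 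The remaining verifications — membership in the correct fibres and the absolute convergence that legitimizes the term-by-term manipulations with $\Phi_\w^n$ — are routine given the bound above.
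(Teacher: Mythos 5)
Your proof is correct and takes essentially the same approach as the paper's proof of Lemma~\ref{lemma:equiv}, translated from Bochner integrals to series. One genuine clarification is worth highlighting: you derive the identity $h_{n+j,\w}(\xi)=h_{j,\theta^n\w}(h_{n,\w}(\xi))$ from the first-order recursion implicit in~\eqref{eq:dyn-split3a:discrete} together with~\eqref{eq:h_0,w(xi)=xi} and induction on $j$, whereas the continuous-time proof of Lemma~\ref{lemma:equiv} invokes the uniqueness hypothesis on solutions of~\eqref{eq:u(t)=...}. Since Theorem~\ref{thm:global:disc} is stated without any such hypothesis, your observation supplies the step that the paper leaves implicit when asserting the discrete case is a straightforward adaptation. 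There is a small slip in exposition: after re-indexing the tail, the quantity $-\sum_{j=0}^{+\infty}\Phi_{\theta^n\w}^{-(j+1)}Q_{\theta^{n+j+1}\w}f_{\theta^{n+j}\w}(\cdots)$ equals $+\phi_{\theta^n\w}(h_{n,\w}(\xi))$, not $-\phi_{\theta^n\w}(h_{n,\w}(\xi))$, by~\eqref{eq:phi_n:discrete} applied at $(\theta^n\w,h_{n,\w}(\xi))$; your final sentence correctly claims this produces~\eqref{eq:dyn-split3b:discrete}, which is consistent only with the corrected sign, so this is a typographical lapse rather than a gap in the argument.
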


Let $J$ be the operator that assigns to every $(h,\phi)\in\cX_{M,N}$ the function $J(h,\phi)\colon \cH \to X$ defined by
\begin{align*}
   \prtsr{J\prts{h,\phi}}(n,\w,\xi)
   & = \Phi_\w^n \xi + \dsum_{k=0}^{n-1}
      \Phi_{\theta^{k+1}\w}^{n-k-1} P_{\theta^{k+1}\w}
      f_{\theta^k\w}(h_{k,\w}(\xi),\phi_{\theta^k\w}(h_{k,\w}(\xi)))
\end{align*}
and $L$ be the operator that assigns to every $(h,\phi)\in\cX_{M,N}$ the function $L(h,\phi)\colon \cG \to X$ defined by
\begin{align*}
   \prtsr{L\prts{h,\phi}}(\w,\xi)
   & = -\sum_{k=0}^{+\infty} \Phi_{\w}^{-(k+1)} Q_{\theta^{k+1}\w} f_{\theta^{k}\w}(h_{k,\w}(\xi),\phi_{\theta^k\w}(h_{k,\w}(\xi))).
\end{align*}

We define now the operator $T \colon \cX_{M,N}  \to \cX_{M,N} $ by
   $$ T(h,\phi) = \prts{J(h,\phi), L(h,\phi)}.$$
Similar to Lemma~\ref{lemma:contraction} we have that the operator $T \colon \cX_{M,N} \to \cX_{M,N}$ is a contraction. Thus, since $\cX_{M,N}$ is a complete metric space, by Banach Fixed Point Theorem, $T$ as a unique fixed point $(h,\phi)$. Clearly, this fixed point satisfies conditions~\eqref{eq:dyn-split3a:discrete} and~\eqref{eq:phi_n:discrete}. By Lemma~\ref{lemma:equiv:discrete} $(h,\phi)$ also satisfies condition~\eqref{eq:dyn-split3b:discrete}. Hence, by~\eqref{eq:dyn-split1a:discrete} and~\eqref{eq:dyn-split1b:discrete}, $\prts{h_{n,\w}(\xi), \phi_{\theta^n\w}(h_{n,\w}(\xi))}$ is the orbit of $(\xi,\phi_\w(\xi)) \in E_\w \x F_\w$ by $\Psi$ given at~\eqref{eq:Psi}, and the graphs of $\cV_\w$ are invariant manifolds of~\eqref{eq:Psi}. Moreover, for each $\w\in \W$, $n\in\N_0$ and   $\xi,\bar{\xi}\in E_\w$ it follows from~\eqref{eq:||phi_w(xi)-pi_w(bxi)||<=...}, \eqref{ine:||h_n,w(xi)-h_n,w(bxi)||<=M...} and~\eqref{def:M+N} that
   \begin{equation*}
      \|\Psi_{\w}^n(\xi,\phi_\w(\xi)) - \Psi_{\w}^n(\bxi,\phi_\w(\bxi))\|
         \le M(1+N) \alpha^+_{n,\w} \|\xi-\bar \xi\|,
   \end{equation*}
finishing the proof of  Theorem~\ref{thm:global:disc} with $C=M(1+N)$.
\section*{Acknowledgements}
This work was partially supported by Funda\c c\~ao para a Ci\^encia e Tecnologia through Centro de Matem\'atica e Aplica\c c\~oes da Universidade da Beira Interior (CMA-UBI), project UIDB/MAT/00212/2020.
\bibliographystyle{amsplain}
\providecommand{\bysame}{\leavevmode\hbox to3em{\hrulefill}\thinspace}
\providecommand{\MR}{\relax\ifhmode\unskip\space\fi MR }
\providecommand{\MRhref}[2]{%
  \href{http://www.ams.org/mathscinet-getitem?mr=#1}{#2}
}
\providecommand{\href}[2]{#2}

\end{document}